\renewcommand{\emph}[1]{{\it #1}}
\newcommand{\NN}{\mathbb{N}}
\newcommand{\RR}{\mathbb{R}}
\newcommand{\CC}{\mathbb{C}}
\newcommand{\PP}{\mathbb{P}}
\newcommand{\Y}{\mathcal{Y}}
\newcommand{\calM}{{\mathcal M}}
\newcommand{\bs}{\boldsymbol}
\newcommand{\JS}{\mathcal{J}}
\newcommand{\E}{\mathcal{E}_2}
\newcommand{\SE}{\mathcal{S}\hspace{-.4mm}\mathcal{E}_2}
\newcommand{\SA}{\mathcal{S}\hspace{-.4mm}\mathcal{A}_2}
\newcommand{\PGL}{\mathcal{P}\hspace{-.4mm}\mathcal{G}\hspace{-.4mm}\mathcal{L}}
\newcommand{\Sym}{\text{Sym}}
\newcommand{\JE}{\mathcal{J}^{\mathcal{E}}}
\newcommand{\JA}{\mathcal{J}^{\mathcal{S}\hspace{-.4mm}\mathcal{A}}}
\newcommand{\SigE}{\mathcal{S}^{\mathcal{E}}}
\newcommand{\SigA}{\mathcal{S}^{\mathcal{S}\hspace{-.4mm}\mathcal{A}}}
\DeclareMathOperator{\im}{\rm im}
\DeclareMathOperator{\rk}{{\rm rank}}
\DeclareMathOperator{\codim}{{\rm codim}}
\DeclareMathOperator{\TRACK}{\rm TRACK}
\DeclareMathOperator{\sing}{\rm sing}
\newcommand\imclo[1]{\overline{\im #1}}
\newcommand\ratmap{\dashrightarrow}
\newcommand\diff[2]{(D \, #1)_{#2}}
\newtheorem{problem}{Problem}
\newtheorem{thm}{Theorem}[section]
\newtheorem{prop}[thm]{Proposition}
\newtheorem{proposition}[thm]{Proposition}
\newtheorem{lem}[thm]{Lemma}
\newtheorem{lemma}[thm]{Lemma}
\newtheorem{conj}[thm]{Conjecture}
\theoremstyle{definition}
\newtheorem{example}[thm]{Example}
\newtheorem{defn}[thm]{Definition}
\newtheorem{definition}[thm]{Definition}
\newtheorem{remark}[thm]{Remark}
\newtheorem{algorithm}{Algorithm}
  \providecommand\BibTeX{{%
    \normalfont B\kern-0.5em{\scshape i\kern-0.25em b}\kern-0.8em\TeX}}}
\begin{document}
\title{Signatures of algebraic curves\\ via numerical algebraic geometry}
\author{Timothy Duff}
\address{Timothy Duff\newline \indent School of Mathematics, Georgia Tech}
\email{tduff3@gatech.edu}

\author{Michael Ruddy}
\address{Michael Ruddy\newline \indent Data Institute at University of San Francisco}
\email{mruddy@usfca.edu}

\renewcommand{\algorithmicrequire}{\textbf{Input:}}
\renewcommand{\algorithmicensure}{\textbf{Output:}}

\newcommand\tim[1]{\textcolor{red}{T: #1}}
\newcommand\mike[1]{\textcolor{blue}{M: #1}}

\begin{abstract}
We apply numerical algebraic geometry to the invariant-theoretic problem of detecting symmetries between two plane algebraic curves. We describe an efficient equality test which determines, with ``probability-one'', whether or not two rational maps have the same image up to Zariski closure. The application to invariant theory is based on the construction of suitable signature maps associated to a group acting linearly on the respective curves. We consider two versions of this construction: differential and joint signature maps. In our examples and computational experiments, we focus on the complex Euclidean group, and introduce an algebraic joint signature that we prove determines equivalence of curves under this action and the size of a curve's symmetry group. We demonstrate that the test is efficient and use it to empirically compare the sensitivity of differential and joint signatures to different types of noise.
\end{abstract}

\keywords{differential invariants, invariant theory, numerical algebraic geometry, polynomial systems, Euclidean group, computer algebra, homotopy continuation}
%

\maketitle
\section{Introduction}\label{sec:intro}

The study of plane curves under linear group actions is a classical subject of both differential \cite{Gugg} and algebraic geometry \cite{O99} with applications to image science \cite{Z94}. In particular, an important problem is to determine whether two curves are equivalent under such a group action, which is more difficult when there is a significant level of noise. For instance, when the transformation group is the group of rigid motions, this can translate to deciding whether two contours represent the same object in different positions, or, in the case of affine and projective transformations, whether two contours might correspond to different projections of the same 3D object. For plane algebraic curves, we state the \emph{group equivalence problem} as follows:

\begin{problem}
\label{problem:group}
Given a positive dimensional algebraic group $G\subset \PGL_3(\CC)$ acting linearly on $\mathbb{C}^2$ and two plane algebraic curves $C_0, C_1\subset \CC^2$, decide if there exists $g\in G$ such that $C_0=\overline{g\cdot C_1}$.
\end{problem}

There exist many different symbolic algorithms to determine equivalence under a particular group of algebraic transformations. For instance, one can construct a set rational invariants that the characterize the orbits of the action on the coefficients of curves of fixed degree $d$ \cite{DK15, HK07, S08} or a pair of rational differential invariants which define a signature polynomial on a curve characterizing its equivalence class \cite{BKH13, KRV18}. However these approaches usually rely on Gr\"obner basis computations which can become increasingly difficult as the degree of the curve increases.

In the analagous setting of \emph{smooth} curves in $\RR^2,$ the Fels-Olver moving frame method~\cite{FO99}, based on Cartan's method of moving frames, associates to each curve a \emph{differential signature curve}, defined in terms of smooth invariants, which is classifying for the group action.
In greater generality, differential signatures may be constructed for smooth submanifolds of some ambient space equipped with a Lie group action.
The differential signature locally characterizes the manifold's equivalence class under the action, meaning that manifolds with the same signature are locally equivalent under the Lie group~\cite{FO99}.

Differential signatures of curves have been successfully applied to object recognition under noise, with applications ranging from jigsaw puzzle reconstruction~\cite{HO14} to medical imaging~\cite{GS17}. Differential signatures have also been used to solve classical invariant theory problems such as determining equivalence of binary and ternary forms~\cite{BO00,KM02,O99}. In~\cite{BKH13} the notion of a signature polynomial was introduced to determine equivalence of plane algebraic curves, and in~\cite{KRV18} it is shown that this reduction to Problem~\ref{problem:map} can always be done.

In the algebraic setting, the differential signature construction was adapted for constructing algebraic invariants in~\cite{HK07} and rational invariants in~\cite{KRV18}. 
For an algebraic group acting on $\CC^2$ and a plane curve $C\subset \CC^2,$ the signature curve is the image of a rational map $\Phi :\CC^2 \ratmap \CC^2.$

\begin{figure}
  \label{fig:teaser}
\begin{center}
\begin{tabular}{m{11em} m{1em} m{11em} m{2em} m{5em}}
  \includegraphics[width=0.2\textwidth]{./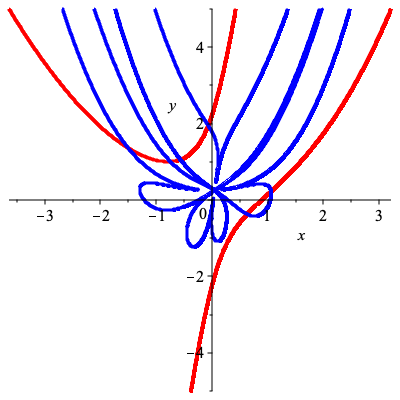}
  &
 $\cong $
&
  \includegraphics[width=0.2\textwidth]{./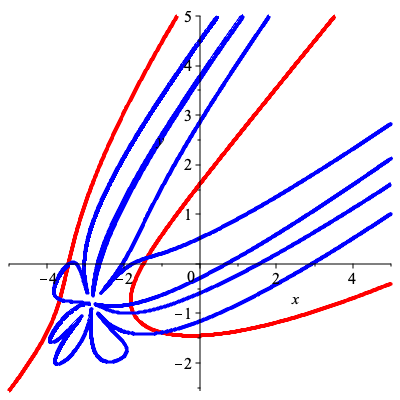}
  &
     $\xrightarrow{\Phi} $
   &
   \includegraphics[width=0.2\textwidth]{./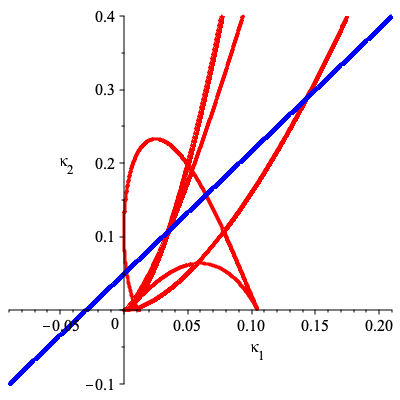}
 \end{tabular}

\end{center}
\caption{Two curves and their signature in red. A line and its pullback in blue.}
\end{figure}
\begin{example}
In Figure~\ref{fig:teaser}, the red curve on left depicts real points $(x,y)$ such that $8 x^3 - 20  x  y + 2y^2 + 5x - 10=0.$ Applying a real rotation and translation yields the curve in the middle. Thus these curves are equivalent under the linear action of the complex Euclidean group $\mathcal{E}_2(\CC).$ The closed image of their respective differential signature maps is the red curve of degree $48$ depicted on the right.
\end{example}

In the setting of Problem~\ref{problem:group}, \cite{KRV18} observed that local equivalence implies global equivalence, reducing Problem~\ref{problem:group} to a special case of Problem~\ref{problem:map} below.

\begin{problem}
\label{problem:map}
Given two irreducible algebraic varieties, $X_0\subset \CC^{n_0}$ and $X_1 \subset  \CC^{n_1},$ and rational maps, $\Phi_0 : X_0 \ratmap \CC^m$ and $\Phi_1 : X_1 \ratmap \CC^m,$ decide if $\imclo{\Phi_0} = \imclo{\Phi_1}.$\footnote{In Problem~\ref{problem:map}, $\imclo{\Phi_i }$ denotes the Zariski closure of the image of $\Phi_i.$ We do not address the more delicate problem of deciding equality of the constructible sets $\im \Phi_i.$}
\end{problem}

In the smooth setting, the reduction of Problem~\ref{problem:group} to Problem~\ref{problem:map} can also be achieved through the use of \textit{joint signatures} (introduced in \cite{O01}) which are obtained by constructing maps using joint invariants of the induced action of $G$ on the product $\mathbb{C}^2\times\hdots\times\mathbb{C}^2$.
The joint signatures may be interpreted as $0$-th order differential invariants, and are considered to be more noise-resistant in applications.
This further motivates our interest in studying Problem~\ref{problem:map} in full generality.

Our approach to Problem~\ref{problem:map} is via \emph{numerical algebraic geometry}.
It is much in the same spirit as previous works~\cite{CK19,HS10,HS13},
where the cost of implicitization is replaced by the cost of computing certain \emph{witness sets} (more precisely, pseudowitness sets.) 
This allows us to study differential and joint signatures for curves up to a much higher degree than in previous works.
To accommodate both differential and joint signatures, we give our main algorithm (Algorithm~\ref{alg:equality-test}) as a general solution to Problem~\ref{problem:map}. 
Algorithm~\ref{alg:equality-test} is a variant of the classical homotopy membership test, specialized to Problem~\ref{problem:map}.
As another novel aspect, we consider the use of recently introduced \emph{multiprojective witness sets}~\cite{HLRS} in Algorithm~\ref{alg:equality-test}.

The authors of this work submitted a preliminary report of this work in the conference proceedings of ISSAC 2020 \cite{DR20}.
In this version we added significantly more details as well as examples, with the goal of writing a paper more accessible than the conference version to researchers interested in either signatures or numerical algebraic geometry.
We also add a rigorous characterization of a planar algebraic curve's symmetry group under the Euclidean group $\mathcal{E}_2(\CC)$ via the joint signature map, mirroring previous results \cite{KRV18} for the differential signature map.
We conduct several new experiments on the sensitivity of the numerical equality test to noise, now involving the equi-affine group as well as curves computed from noisey samples.
Finally, we include a discussion of the relationship between monodromy and the symmetry groups of curves.

The paper is organized as follows. In Section~\ref{sec:sig} we discuss signatures of planar curves and how they can be used to reduce Problem~\ref{problem:map} to Problem~\ref{problem:map}. In~\ref{sec:diffsig} we follow the construction in~\cite{BKH13, KRV18} to describe a differential signature for plane algebraic curves using a \textit{classifying pair} of differential invariants. In~\ref{sec:jointsig} we describe how joint signatures can be used to determine equivalence of plane curves using lower order differential invariant functions, with a detailed analysis in the case of the complex Euclidean group $\mathcal{E}_2(\CC)$. In Section~\ref{sec:nag}, we review notions from numerical algebraic geometry and describe a general solution to Problem~\ref{problem:map} (Algorithm~\ref{alg:equality-test}). In Section~\ref{sec:implementation}, we describe an implementation in Macaulay2~\cite{M2}, which has been successful for studying both classes of maps on curves of degree up to $10.$ Our (reproducible) experiments show that offline witness computation for plane curves of various degrees is feasible, that the online equality test gives a fast alternative to symbolic methods, and that the numerical approach is robust in a certain regime of noise. Additionally we investigate how different types of noise affect the sensitivity of the numerical equality test. 

\section{Signatures of curves}\label{sec:sig}

\subsection{Invariants of planar curves}\label{sec:diffex}

A classical subject in differential geometry are invariants and the classification of differentiable planar curves in $\RR^2$ under rigid motions \cite{Gugg}. This can be seen as a variant on what we defined as Problem \ref{problem:group}.

\begin{defn}\label{def:Gequiv}
Two curves $C_0, C_1$ are said to be \textit{$G$-equivalent}, denoted $C_0\cong_G C_1,$ if there exists an element $g$ in the group of transformations $G$ such that $C_0=g\cdot C_1$.
\end{defn}

Remaining purposefully agnostic about what constitutes a ``curve'' and a ``group of transformations,'' we can define the \textit{Group equivalence problem for curves} as: given two curves and a group of transformations $G$ decide if they are $G$-equivalent. In this context both classical questions about geometry of real curves and Problem \ref{problem:group} are both specific instances of a larger class of problems. In this subsection, we discuss how previous work on the group equivalence problem for differentiable curves in $\RR^2$ connects to our approach to Problem \ref{problem:group} for algebraic curves in $\CC^2$.

For now let $C$ refer to the image of a smooth\footnote{Here smooth refers to a map defined by infinitely differentiable functions. For simplicity we require smooth functions, though for the results and constructions referenced, this restriction can be loosened to $n$-differentiable for an appropriate choice of $n$.} 
 map $\gamma = (x(t), y(t))$ where $\gamma : I \rightarrow \RR^2$ for some interval $I\subset \RR$. We denote $\SE(\RR)$ as the \textit{special Euclidean group}, the transformation group of rotations and translations of $\RR^2$. A classical invariant\footnote{More rigorously, Euclidean curvature is a \textit{local} invariant, and one must take into account sign changes, i.e. a rotation of $\pi$ changes the sign of $\kappa(t)$.}
 of curves under rigid motion is the Euclidean curvature function $\kappa(t)$, defined below in \eqref{eq:Ecurvature}, meaning that the value of curvature at a particular point of a curve does not change when the curve transformed by $\SE(\RR)$.
 
\begin{equation}\label{eq:Ecurvature}
\kappa(t) = \frac{x'(t)y''(t) - y'(t)x''(t)}{(x'(t)^2+y'(t)^2)^{3/2}}
\end{equation}

Euclidean curvature at a point on a curve can be defined in many ``geometrically-satisfying '' ways, as the multiplicative inverse of the radius of the osculating circle, or norm of the tangent vector when the curve is parameterized by arc length. Euclidean curvature also provides a way to solve the group equivalence problem for curves under $\SE(\RR)$. The following theorem appears in many places, for instance \cite{Gugg}, and is sometimes referred to as the ``Fundamental theorem for planar curves.''

\begin{thm}\label{thm:fundmentalEcurvature}
If two smooth curves have the same Euclidean curvature as a function of arc length, then they are $\SE(\RR)$-equivalent.
\end{thm}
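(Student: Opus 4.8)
The plan is to reduce the statement to a uniqueness theorem for a linear system of ODEs (the Frenet equations), after normalizing both curves by arc length. First I would reparametrize $C_0$ and $C_1$ by arc length, which is legitimate because a smooth curve here is understood to be regular (note that \eqref{eq:Ecurvature} already presupposes $\gamma'(t)\neq 0$); after translating the parameter, this yields two unit-speed parametrizations $\gamma_0,\gamma_1\colon I\to\RR^2$ with $\gamma_i'(s)\cdot\gamma_i'(s)=1$ sharing a common curvature function $\kappa(s)$ on a common interval $I$.

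Next, for a unit-speed curve the Frenet frame $(T,N)$ — with $T=\gamma'$ and $N$ the counterclockwise rotation of $T$ by $\pi/2$ — is orthonormal and satisfies $T'=\kappa N$, $N'=-\kappa T$. Equivalently, writing $T(s)=(\cos\theta(s),\sin\theta(s))$ for a smooth lift $\theta$ of the tangent angle (which exists since $s\mapsto T(s)$ is a smooth map into $S^1$ and $I$ is an interval), one gets $\theta'(s)=\kappa(s)$, hence $\theta(s)=\theta(0)+\int_0^s\kappa(u)\,du$ and therefore
\[
\gamma(s)=\gamma(0)+\int_0^s\bigl(\cos\theta(u),\sin\theta(u)\bigr)\,du .
\]
This exhibits $\gamma$ as completely determined by $\kappa$ together with the initial data $\gamma(0)\in\RR^2$ and $\theta(0)\in\RR/2\pi\ZZ$.

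Granting this, I would choose the unique $g=(R,b)\in\SE(\RR)$ — rotation $R$ through the angle $\theta_0(0)-\theta_1(0)$, followed by the translation $b$ carrying $R\gamma_1(0)$ to $\gamma_0(0)$ — so that $g\cdot\gamma_1$ and $\gamma_0$ have matching initial point and initial tangent. Since $g$ is an orientation-preserving isometry, $g\cdot\gamma_1$ is again unit-speed with curvature $\kappa$ and tangent-angle lift $\theta_1+(\theta_0(0)-\theta_1(0))$, whose initial value equals $\theta_0(0)$; the integral formula above then forces $g\cdot\gamma_1=\gamma_0$ on all of $I$, so $C_0\cong_{\SE(\RR)}C_1$. (Alternatively, one invokes uniqueness for the linear ODE system given by the Frenet equations with the matched initial condition.)

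The argument is soft once the setup is in place; the step requiring the most care — and the one I would scrutinize — is the reparametrization and orientation bookkeeping: arc length is defined only up to an additive constant and a sign, the tangent-angle function only up to $2\pi\ZZ$, and the hypothesis ``same curvature as a function of arc length'' must be read compatibly with these choices (this is exactly the content of the footnote on sign changes). Granting consistent choices of orientation and basepoint on each curve, no further obstacle arises.
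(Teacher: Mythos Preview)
Your argument is the standard, correct proof of the fundamental theorem for planar curves via the Frenet equations and ODE uniqueness. However, the paper does not supply its own proof of this statement: it is quoted as a classical result with a citation to Guggenheimer, so there is no paper-proof to compare against. What you have written is essentially the proof one finds in that reference, and your caveat about orientation and basepoint bookkeeping is the right place to be careful.
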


Thus $\kappa(s)$, Euclidean curvature when $C$ is parameterized by the arc length parameter $s$, completely determines a curve up to $\SE(\RR)$. In practice comparing curves' curvature functions to determine $\SE(\RR)$-equivalence is difficult as this comparison depends on the parameterization and the starting point when the curve is closed. In this sense, a single curve can have infinitely many different curvature functions. Motivated by applications to object recognition, the authors of \cite{CO98} proposed the use of the \textit{Euclidean signature curve} to determine $\SE(\RR)$-equivalence of smooth curves.

\begin{definition}\label{def:EuclideanSigSmoth}
The \textit{Euclidean signature curve} of a smooth curve $C$ is the image of $C$ under the map $\mathcal{S} : C \rightarrow \RR^2$ defined by $\mathcal{S} = (\kappa, \kappa_s)$, where $\kappa_s$ is the function representing the derivative of $\kappa$ with respect to arc length.
\end{definition}

\begin{thm}[Theorem 2.3 in \cite{CO98}]
If two smooth curves, $C_0, C_1$ have the same Euclidean signature curve, then they are locally equivalent under $\SE(\RR)$.
\end{thm}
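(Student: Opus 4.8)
The plan is to deduce the statement from the Fundamental Theorem for planar curves, Theorem~\ref{thm:fundmentalEcurvature}: I want to show that the common signature curve forces $C_0$ and $C_1$ to have the same Euclidean curvature as a function of arc length at suitable corresponding points. First I would dispose of the case where $\kappa$ is constant along $C_0$: then $C_0$ is an arc of a circle of radius $1/\kappa_0$ (a line when $\kappa_0 = 0$), its signature curve is the single point $(\kappa_0, 0)$, and from $\mathcal{S}(C_1) = \mathcal{S}(C_0) = \{(\kappa_0,0)\}$ it follows that $C_1$ also has curvature identically $\kappa_0$, so $C_1$ is an arc of a congruent circle (resp.\ a line) and the two are $\SE(\RR)$-equivalent. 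So assume $\kappa$ is non-constant on $C_0$; then the open set where $\kappa_s \neq 0$ is non-empty, and I pick a generic such point $p_0 \in C_0$ together with an arc-length parametrization near it, giving a smooth function $s \mapsto \kappa^{(0)}(s)$ with $\kappa^{(0)}(0) = a$ and $(\kappa^{(0)})'(0) = b \neq 0$.

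The key step is an ODE argument. Since $(\kappa^{(0)})'(0) \neq 0$, the map $s \mapsto \kappa^{(0)}(s)$ is a local diffeomorphism, so along the signature curve near $(a,b)$ we may express $\kappa_s$ as a smooth function $F$ of $\kappa$; thus $\kappa^{(0)}$ solves the autonomous first-order equation $\dot\kappa = F(\kappa)$, $\kappa(0) = a$. Because $\mathcal{S}(C_1) = \mathcal{S}(C_0)$, there is a point $p_1 \in C_1$ with $\mathcal{S}(p_1) = (a,b)$, and since $\kappa_s(p_1) = b \neq 0$ this $p_1$ is itself not a vertex; after fixing an orientation and base point on $C_1$ so that the initial data match (see below), the curvature $\kappa^{(1)}(s)$ of $C_1$ in arc length from $p_1$ satisfies, for generic $p_0$, the same relation $\kappa_s = F(\kappa)$ near $(a,b)$, hence $\dot\kappa^{(1)} = F(\kappa^{(1)})$, $\kappa^{(1)}(0) = a$. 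As $F$ is smooth, hence locally Lipschitz, the Picard--Lindel\"of theorem gives $\kappa^{(1)} \equiv \kappa^{(0)}$ near $s = 0$. By Theorem~\ref{thm:fundmentalEcurvature}, suitable neighborhoods of $p_0$ and $p_1$ are $\SE(\RR)$-equivalent; as $p_0$ was an arbitrary generic point of $C_0$, this is the desired local equivalence.

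The main obstacle is the bookkeeping outside this generic situation. Signed curvature flips sign under reversal of the orientation of a curve while $\kappa_s$ does not, so the signature curve is only well defined up to reflection in the $\kappa_s$-axis; one must choose the orientation and base point of $C_1$ --- and, a priori, among several points mapping to $(a,b)$ --- so that the initial data of the two ODEs genuinely agree. More delicate is the behavior at vertices of $C_0$, where $\kappa_s = 0$ and the signature need not be a graph over the $\kappa$-axis: there one must recover an ODE by a different route (for instance using $\kappa_s$ as the local parameter and inspecting leading Taylor coefficients), re-verify a uniqueness property, and rule out the pathology of a non-constant arc of $C_1$ whose signature is absorbed into a single point coming from a circular sub-arc of $C_0$. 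Finally, equality of the signature \emph{images} matches only \emph{some} local pieces of the two curves --- which is precisely why the conclusion is local, not global, equivalence --- so one should also make precise what ``locally equivalent'' is being asserted and confirm the argument above supplies it.
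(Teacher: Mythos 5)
First, a point of reference: the paper does not prove this statement at all --- it is imported as Theorem 2.3 of \cite{CO98} --- so your attempt can only be measured against the standard argument in the literature, which is indeed the one you outline: read the signature locally as a graph $\kappa_s = F(\kappa)$, conclude that the curvature of each curve, as a function of arc length, solves the autonomous ODE $\dot\kappa = F(\kappa)$ with matching initial data, invoke Picard--Lindel\"of, and finish with Theorem~\ref{thm:fundmentalEcurvature}. At a point where $\kappa_s \neq 0$ and where the signature has no second branch through $(a,b)$, this is correct, and your treatment of the constant-curvature case is also fine.

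The genuine gap is the vertex case, which you flag but do not close, and it cannot be deferred: the paper's definition of local equivalence (stated immediately after the theorem) demands a matching neighborhood around \emph{each} point of $C_0$, including points where $\kappa_s = 0$ while $\kappa$ is not locally constant. There the ODE scheme breaks structurally, not just technically. Indeed, if near such a point the signature were a graph $\kappa_s = F(\kappa)$ with $F$ locally Lipschitz, then $F(a)=0$ at the vertex value $a=\kappa(p_0)$, and Picard--Lindel\"of would force $\kappa \equiv a$, contradicting non-constancy; so at a vertex either $F$ fails to be Lipschitz or the signature is not a graph over the $\kappa$-axis at all (typically it has a vertical tangent or a cusp there), and in either case the uniqueness statement that powers your argument is unavailable. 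Your proposed remedies --- reparametrizing by $\kappa_s$, comparing leading Taylor coefficients, excluding the absorption of a non-constant arc of $C_1$ into a single signature point --- are the right shopping list, but none is carried out, and this degenerate regime is exactly where the known subtleties of signature curves live. The orientation and base-point bookkeeping you raise is comparatively harmless. As written, the argument establishes local equivalence only off a closed set of exceptional points of $C_0$, which is strictly weaker than the assertion being proved.
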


Here locally equivalent means that around each point of $C_0$, there exists open subsets $U_0 \subset C_0$ and $U_1 \subset C_1$ such that $U_0  = g\cdot U_1$ for some $g\in \SE(\RR)$. Thus the local geometry of a curve is determined by relationship between Euclidean curvature and another differential invariant function, the derivative of $\kappa$ with respect to arc length. For closed curves this comparison is parameterization and starting point invariant. In subsequent works, the Euclidean signature curve is used for a curve matching algorithm \cite{HO13} and applied to automatic jigsaw puzzle reassembly \cite{HO14}.

The authors of \cite{CO98} also note that this procedure generalizes to planar curves under other transformation groups of $\RR^2$. For most Lie group actions of $G$ on $\RR^2$ there exists a notion of $G$-invariant curvature $\kappa$ and $G$-invariant arc length $s$ such that if curves have the same image, or \textit{differential signature}, under $(\kappa, \kappa_s)$ then they are locally equivalent under $G$ \cite[Thm 5.2]{CO98}. Moreover a pair of such differential invariants can be constructed explicitly by the Fels-Olver moving frame method \cite{FO99}, giving a practical method to locally solve the group equivalence problem for smooth curves.

Turning attention back to Problem \ref{problem:group}, for algebraic curves local equivalence under a group $G$ immediately implies global $G$-equivalence as in Definition \ref{def:Gequiv}. Thus the differential signature characterizes algebraic curves under a transformation group $G$. In \cite{BKH13} the authors connect the differential signature to symbolic methods by noticing that when the differential invariants can be expressed as a rational map on the curve, two algebraic curves' differential signatures can be compared by computing their implicit equations, connecting Problem \ref{problem:map} to the group equivalence problem for algebraic curves over $\RR^2$.

In \cite{KRV18} it is shown that for any subgroup of $\PGL_3(\CC)$ there exists a pair of rational differential invariants which can reduce Problem \ref{problem:group} to Problem \ref{problem:map}. Thus the differential signature can be used to solve questions of classical invariant theory in a way that uses the same invariants regardless of the degree of the algebraic curves in question. Moreover these differential invariants can be interpreted as generators of a field of rational invariants, meaning that they can be computed by symbolic methods such as those in \cite{DK15} or the cross-section method in \cite{HK07} inspired by the previously mentioned moving frame method. Thus Problem \ref{problem:group} can be solved end-to-end by symbolic computation in this way.

In practice, these methods can be quite slow for computing implicit equations of differential signatures (see \cite[Ex 3.2.13]{RuddyThesis} for instance), which is our motivation to further extend the connection between Problem \ref{problem:group} to Problem \ref{problem:map} by leveraging numerical algorithms for computing pseudo-witness sets to compare differential signatures of algebraic curves. In Section \ref{sec:diffsig} we explain, in greater detail, the reduction of Problem \ref{problem:group} to Problem \ref{problem:map} along with examples.

In addition to differential invariants, a similar approach has been taken using \textit{joint} differential invariants to solve the group equivalence problem for smooth curves in \cite{O01}, which are in theory more robust to noise and perturbations. In Section \ref{sec:jointsig} we consider for the first time using the joint signature in a completely algebraic approach to Problem \ref{problem:group}, proving in the case of $\E(\CC)$ that the Euclidean joint signature characterizes equivalence classes of algebraic curves.

In the next two sections, we assume that plane curves are complex algebraic, irreducible, and of degree greater than one. The degree restriction removes from consideration lines, on which not all transformations $g\in G$ may not be defined.

\subsection{Differential signatures}\label{sec:diffsig}

For algebraic curves, we tweak Definition \ref{def:Gequiv} to the following definition, which allows for the image of a curve under the action of $G$ to not be closed.

\begin{defn}\label{def:GequivALG}
Two algebraic curves $C_1$ and $C_2$ are \textit{$G$-equivalent} if there exists $g\in G$ such that $C_1 = \overline{g\cdot C_2}$.
\end{defn}

We assume that the group $G\subset \PGL_3(\CC)$ is a positive dimensional algebraic group acting linearly on $\mathbb{C}^2$ with action $g\cdot (x,y)=(\overline{x},\overline{y})$.

\begin{defn}
The \textit{projective group} $\PGL_3(\CC)$ is the group of invertible matrices modulo scaling, i.e. $\PGL_3(\CC) \cong \faktor{\mathcal{G}\mathcal{L}_3(\CC)}{\lambda I}$. The linear action on $\CC^2$ is defined by the map $\Phi : \PGL_3(\CC) \times \CC^2 \dashrightarrow \CC^2$ where for $A\in \PGL_3(\CC)$ and $p=(x,y)\in \CC^2$,
$$
\Phi(A,p) = \left(\frac{a_{11}x+a_{12}y+a_{13}}{a_{31}x+a_{32}y+a_{33}},\frac{a_{21}x+a_{22}y+a_{23}}{a_{31}x+a_{32}y+a_{33}}\right).
$$
\end{defn}

We consider a few classical subgroups of $\PGL_3(\CC)$.

\begin{defn}\label{def:EuclideanGroup}
The \textit{Euclidean group} $\E(\CC)$ is the subgroup of $\PGL_3(\CC)$ given by matrices of the form
$$
\begin{bmatrix}
\alpha c & \alpha s & a \\
-s & c & b \\
0 & 0 & 1
\end{bmatrix}
$$
where $a,b,c,s\in \CC$, $\alpha \in  \{-1, 1\}$, and $c^2+s^2=1$.
\end{defn}

\begin{defn}
The \textit{special Euclidean group} $\SE(\CC)$ is the subgroup of $\E(\CC)$ consisting of determinant one matrices.
\end{defn}

\begin{defn}\label{def:EquiAffineGroup}
The \textit{equi-affine group} $\SA(\CC)$ is the subgroup of $\PGL_3(\CC)$ given by matrices of the form
$$
\begin{bmatrix}
a_{11} & a_{12} & a_{13} \\
a_{21} & a_{22} & a_{23} \\
0 & 0 & 1
\end{bmatrix}
$$
with entries in $\CC$ and $a_{11}a_{22}-a_{12}a_{21}=1$.
\end{defn}

\noindent A differential signature that determines $G$-equivalence of algebraic curves can be constructed from a set of classifying invariants (Definition~\ref{def:classInv}). We let $J^n$ denote the $n$th order jet space, a complex vector space of dimension $(n+2)$ with coordinates $(x,y,y^{(1)},\hdots, y^{(n)}).$ Letting $\Omega (J^n)$ denote the set of complex-differentiable functions from $J^n$ to $\CC,$ the \emph{total derivative operator} $\frac{d}{dx} : \Omega (J^n) \to \Omega (J^{n+1})$ is the unique $\CC$-linear map satisfying the product rule and the relations $\frac{d}{dx}(x)=1$, $\frac{d}{dx} (y^{(k)})=y^{(k+1)}$ for $k\geq 0$, cf.~\cite[Ch.~7]{Olv95}. The \textit{prolonged} action of $G$ on $J^n$ is given by 
$$
g\cdot (x,y,y^{(1)},\hdots , y^{(n)})=(\overline{x},\overline{y},\overline{y}^{(1)},\hdots , \overline{y}^{(n)})
$$
where
\begin{equation*}
\overline{y}^{(1)}= \frac{\frac{d}{dx}\left[ \overline{y}(g,x,y)\right]}{\frac{d}{dx}\left[ \overline{x}(g,x,y)\right]},\quad 
 \overline{y}^{(k+1)}= \frac{\frac{d}{dx}\left[ \overline{y}^{(k)}(g,x,y,y^{(1)},\hdots,{ y^{(k)}})\right]}{\frac{d}{dx}\left[ \overline{x}(g,x,y)\right]}
 \text{  for } k=1,\hdots,n-1.
\end{equation*}

\begin{defn}
\label{def:diffInv}
A \textit{differential invariant} for the action of $G$ is a function on $J^n$ that is invariant under the prolonged action of $G$ on $J^n$. The \textit{order} of a differential invariant is the maximum $k$ such that the function depends explicitly on $y^{(k)}$.
\end{defn}

\begin{defn}
The \textit{$n$-th jet} of an algebraic curve $C$ is the image of the map $j^n_C:C\ratmap J^n$ given (where defined) by
$$
(x,y) \mapsto (x,y,y_C^{(1)}(x,y),y_C^{(2)}(x,y),\hdots, y_C^{(n)}(x,y)),
$$
where $y^{(k)}_C(x,y)$ is the $k$-th derivative of $y$ with respect to $x$ at the point $(x,y)\in C$.

\end{defn}
\noindent The prolonged action of $G$ is defined such that
$$
g\cdot j^n_C(C) = j^n_{g\cdot C}(g\cdot C).
$$
\begin{defn}
The \textit{restriction} of a differential invariant $K$ of order $n$ to a curve $C$ is the map $K|_C:C\ratmap \mathbb{C}^2$ given by $K|_C=K\circ j^n_C$.
\end{defn}
\noindent The coordinates of the $n$-th jet map $j^n_C$ are rational functions of $x$ and $y$ that can be computed via implicit differentiation: 
\begin{equation}
\label{eq:SLP}
y^{(1)}_C=\frac{-\partial_x \, F}{\partial_y \, F}\quad\text{and}\quad y^{(k+1)}_C = \partial_x \, y_C^{(k)} +\partial_y \, y_C^{(k)} \,  y_C^{(1)}.
\end{equation}
where $\mathcal{I}_C = \langle F \rangle .$ Thus, if $K$ is a \textit{rational} differential invariant of order $n$, meaning it is a rational function in the coordinates of $J^n$, then $K|_C$ is a rational function in $x$ and $y$.

\begin{defn}
We say that a set of differential invariants $\mathcal{I}$ \textit{separates orbits} for the prolonged action on a nonempty Zariski-open $W\subset J^n$ if, for all $p,q\in W$,
$$
K(p)=K(q)\,\,\, \forall K\in \mathcal{I} \quad \Leftrightarrow \quad \exists g\in G\,\, \text{such that} \,\, p=g\cdot q.
$$
\end{defn}

\begin{example}\label{ex:ProlongedEuclidean}
The prolonged action of $\E(\CC)$ on $J^2$ is given by

$$
g\cdot (x,y,y_x, y_{xx}) = \left(\alpha (c x+sy)+a, -sx+cy+b, \alpha \frac{cy_x-s}{sy_x+c},\frac{\alpha y_{xx}}{(sy_x+c)^3}\right),
$$

\noindent where $y_x = y^{(1)}$ and $y_{xx} = y^{(2)}$. The Euclidean curvature function in \eqref{eq:Ecurvature} can be written in the coordinates of $J^2$ as

$$
\kappa(x,y,y_x y_{xx}) = \frac{y_{xx}}{\left(1+y_x^2\right)^{3/2}}.
$$

Though $\kappa$ is not a rational differential function on $J^2$, the function $\kappa^2$ is. Thus $\kappa^2$ is a rational differential invariant function for the action of $\E(\CC)$. In fact one can show that $\kappa^2$ separates orbits for the prolonged action of $\E(\CC)$ on $J^2$. For a particular algebraic curve $C$ defined by $F(x,y)=0$, we can restrict $\kappa^2$ to $C$ to obtain the map $\kappa^2|_C:C\rightarrow \CC$ defined by

$$
\frac{\left(\frac{-F_{xx}F_y^2+2F_{xy}F_xF_y-F_{yy}F_x^2}{F_y^3}\right)^2}{\left(1+\left(\frac{F_x}{F_y}\right)^2\right)^3} = \frac{\left(-F_{xx}F_y^2+2F_{xy}F_xF_y-F_{yy}F_x^2\right)^2}{\left(F_x^2+F_y^2\right)^3}.
$$

\end{example}

\begin{defn}
\label{def:classInv}
Let an $r$-dimensional algebraic group $G$ act on $\mathbb{C}^2$. A pair of rational differential invariants $\mathcal{I}=\{K_1,K_2\}$ is said to be \textit{classifying} if $K_1$ separates orbits on $U_k \subset J^k$ for some $k<r$ and $\mathcal{I}$ separates orbits on $U_r \subset J^r$.
\end{defn}

\noindent For a particular action of $G$, such a pair of classifying invariants always exists, and one can explicitly construct a pair by computing generators for the field of rational invariants for the prolonged action of $G$ \cite[Thm 2.20]{KRV18}, using algorithms such as those found in \cite{DK15} and \cite{HK07}. It should be noted that $\mathcal{I}$ is not unique, and different choices can lead to different differential signatures.

\begin{defn}
For a pair of classifying invariants $\mathcal{I}=\{K_1,K_2\}$, an algebraic curve $C$ is said to be \textit{non-exceptional} if all but finitely many points on $p\in C$ satisfy
\begin{displaymath}
j^k_C(p)\in U_k,\, \,   j^r_C(p)\in U_r, \,  \text{ and } \, 
\frac{\partial K_1}{\partial y^{(k)}}, \, \frac{\partial K_2}{\partial y^{(r)}}\neq 0 \text{ at } j^r_C(p).
\end{displaymath}
\end{defn}
\noindent A generic curve of degree $d$ where $\binom{d+2}{2}-2\geq r$ is non-exceptional with respect to a given classifying set \cite[Thm 2.27]{KRV18}.
\begin{defn}
Let $\mathcal{I}=\{K_1,K_2\}$ be a pair of classifying invariants for the action of $G$ on $\mathbb{C}^2$ and $C$ a non-exceptional algebraic curve with respect to $\mathcal{I}$. The map  $\sigma_C: C \rightarrow \CC^2$ defined by $\sigma_C =\left(K_1|_C,K_2|_C\right)$ is the \textit{differential signature map} for $C$ and its image is the \textit{differential signature} of $C$, denoted $\mathcal{S}_C$.
\end{defn}

\noindent The following appears as Theorem 2.37 in \cite{KRV18}.

\begin{thm}\label{Thm:Signature}
If algebraic curves $C_0 ,C_1$ are non-exceptional with respect to a classifying set of rational differential invariants $\mathcal{I}=\{K_1,K_2\}$ under an action of $G$ on $\mathbb{C}^2$ then
$$
C_0\cong_G C_1 \quad \Leftrightarrow \quad \overline{\mathcal{S}_{C_0}}=\overline{\mathcal{S}_{C_1}}.
$$
\end{thm}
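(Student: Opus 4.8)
The plan is to treat the two implications separately, the forward one being short and the reverse one carrying all the weight. \textbf{Forward implication:} suppose $C_0=\overline{g\cdot C_1}$ for some $g\in G$. Then $j^r_{C_0}$ and $j^r_{g\cdot C_1}$ are the same rational map, and the prolongation compatibility $g\cdot j^r_{C_1}(C_1)=j^r_{g\cdot C_1}(g\cdot C_1)$ gives $j^r_{C_0}(g\cdot q)=g\cdot j^r_{C_1}(q)$ wherever defined. Since each $K_i$ is invariant under the prolonged action, this yields $\sigma_{C_0}(g\cdot q)=\sigma_{C_1}(q)$, so $\mathcal{S}_{C_1}\subseteq\mathcal{S}_{C_0}$; repeating the argument with $g^{-1}$ gives equality of the images, hence of their Zariski closures.

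\textbf{Reverse implication, producing a group element.} Write $S:=\overline{\mathcal{S}_{C_0}}=\overline{\mathcal{S}_{C_1}}$ and pick a generic $p_0\in C_0$. Using non-exceptionality of both curves I would arrange that $p_0$ is a smooth point of $C_0$, that $j^k_{C_0}(p_0)\in U_k$ and $j^r_{C_0}(p_0)\in U_r$ with $\partial K_1/\partial y^{(k)},\partial K_2/\partial y^{(r)}\neq 0$ at $j^r_{C_0}(p_0)$, and that $s_0:=\sigma_{C_0}(p_0)$ is a smooth point of $S$ lying in the constructible dense set $\mathcal{S}_{C_0}\cap\mathcal{S}_{C_1}$ outside the finitely many images of exceptional points of $C_1$. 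Then there is a non-exceptional smooth $p_1\in C_1$ with $\sigma_{C_1}(p_1)=s_0$. Now $K_i(j^r_{C_0}(p_0))=K_i(j^r_{C_1}(p_1))$ for $i=1,2$ and both $r$-jets lie in $U_r$, so the classifying hypothesis (that $\{K_1,K_2\}$ separates orbits on $U_r$) furnishes $g\in G$ with $j^r_{C_0}(p_0)=g\cdot j^r_{C_1}(p_1)$. Setting $C_1':=\overline{g\cdot C_1}$, prolongation compatibility upgrades this to $j^r_{C_1'}(p_0)=j^r_{C_0}(p_0)$, and (non-exceptionality being a $G$-equivariant condition) the analogous jet and non-vanishing conditions hold for $C_1'$ at $p_0$; thus $C_0$ and $C_1'$ are irreducible algebraic curves sharing an $r$-jet at the common smooth point $p_0$.

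\textbf{Reverse implication, from a shared jet to equal curves.} The substantive step is to boost ``equal $r$-jet at $p_0$'' to ``equal $N$-jet at $p_0$ for every $N\ge r$,'' which identifies the analytic branches of $C_0$ and $C_1'$ through $p_0$; since two distinct irreducible plane curves meet in a finite set while a branch is one-dimensional, this forces $C_0=C_1'$, i.e.\ $C_0\cong_G C_1$. I would prove the jet statement by induction on $N$. Let $F_S$ be a local equation of $S$ at $s_0$; then $F_S(K_1|_C,K_2|_C)\equiv 0$ along $C=C_0$ and along $C=C_1'$. Applying $\frac{d}{dx}$ to this identity and substituting the recurrence \eqref{eq:SLP} produces, at $p_0$, a scalar equation into which $y^{(N+1)}$ enters linearly: its contribution to $K_2|_C$ is proportional to $\partial K_2/\partial y^{(r)}$ and its contribution to $K_1|_C$ proportional to $\partial K_1/\partial y^{(k)}$, while every lower-order quantity has already been matched by the inductive hypothesis. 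Non-vanishing of these two partials makes the equation solvable for $y^{(N+1)}$ in terms of the lower jet, so $y^{(N+1)}_{C_0}(p_0)=y^{(N+1)}_{C_1'}(p_0)$, completing the induction.

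\textbf{Anticipated obstacle.} The delicate point is making this induction rigorous: one must organize the (invariant) differentiation so that, modulo the $G$-action, the single relation $F_S$ together with the two non-degeneracy conditions really does pin down each successive Taylor coefficient of the curve — this is the algebraic counterpart of the ``fundamental theorem''-style uniqueness for the ODE that reconstructs a curve from its signature, and the bookkeeping of which jet variable appears where is where care is needed. I would also have to dispose of degenerate configurations, such as $S$ being zero-dimensional or $\sigma_{C_i}$ failing to be generically finite onto its image, either by checking these are excluded by non-exceptionality or by running the orbit-separation argument on the lower stratum $U_k$ instead. On the other hand, no global patching of group elements is required: once $C_0$ and $\overline{g\cdot C_1}$ are known to share a single analytic branch, irreducibility already yields $C_0=\overline{g\cdot C_1}$.
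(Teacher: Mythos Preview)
The paper does not supply its own proof of this theorem: it is stated and attributed as Theorem~2.37 of \cite{KRV18}, with no argument given in the present paper. So there is nothing in-paper to compare your proposal against directly.

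That said, your outline tracks the standard strategy used in \cite{KRV18} and the moving-frame literature: invariance gives the forward direction, and for the converse one matches a generic $p_0\in C_0$ with $p_1\in C_1$ via the common signature value, invokes orbit separation on $U_r$ to produce $g$, and then argues that $C_0$ and $\overline{g\cdot C_1}$ agree to all orders at the shared point, hence coincide by irreducibility.

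One point in your inductive step deserves more care than you give it. After applying $(d/dx)^{N+1-r}$ to $F_S(K_1|_C,K_2|_C)\equiv 0$, the variable $y^{(N+1)}$ enters \emph{only} through the $K_2$-branch, since $K_1$ has order $k<r$ and $k+(N+1-r)<N+1$; so your remark about a contribution from $\partial K_1/\partial y^{(k)}$ is misplaced. The actual coefficient of $y^{(N+1)}$ is $\bigl(\partial_{\kappa_2}F_S\bigr)\cdot\bigl(\partial K_2/\partial y^{(r)}\bigr)$ evaluated at the relevant jet. Non-exceptionality controls the second factor, but you also need $\partial_{\kappa_2}F_S\neq 0$ at $s_0$. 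This holds for generic $s_0\in S$ provided $S$ is not a vertical line $\{\kappa_1=\text{const}\}$, i.e.\ provided $K_1|_{C_0}$ is non-constant; the constant-$K_1$ case is a genuine degeneracy not on your list and must be treated separately (for instance via the separation property of $K_1$ alone on $U_k$, together with a lower-order version of the same jet argument). With that caveat, your proposal is a reasonable reconstruction of the cited proof.
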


\noindent Since the Zariski-closure of the differential signature of an algebraic curve $\overline{\mathcal{S}_C}$ characterizes its equivalence class under $G$, so does the polynomial vanishing on $\mathcal{S}_C$, referred to as the \textit{signature polynomial} of $C$ and denoted $S_C(\kappa_1,\kappa_2)$. Thus to determine if curves $C_1$ and $C_2$ are $G$-equivalent we can compare signature polynomials $S_{C_1}$ and $S_{C_2}$. The differential signature map also characterizes the size of the symmetry group of $C$ under $G$.

\begin{defn}
The \textit{symemtry group} of $C$ under $G$ is the subgroup of $G$ defined by

$$
\text{Sym}(C,G) = \{g\in G\, |\, C = \overline{g\cdot C}\}.
$$
\end{defn}

\noindent The following follows from Lemma 2.34 and Theorem 2.38  in \cite{KRV18}.

\begin{thm}\label{thm:diffsigsymgroup}
For an algebraic curve, non-exceptional with respect to $\mathcal{I} = \{K_1,K_2\}$, the symmetry group $\Sym(C,G)$ is of cardinality $n < \infty$ if and only if the map $\sigma_C$ is generically $n:1$. Furthermore $\Sym(C,G)$ is infinite if and only if $S_C$ is a single point.
\end{thm}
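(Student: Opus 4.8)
The plan is to identify the generic fibres of $\sigma_C$ with the orbits of $\Sym(C,G)$ acting on $C$. First I would set up the action of symmetries on the curve. For $g\in\Sym(C,G)$, the rational action of $g$ on $\CC^2$ restricts, since $C=\overline{g\cdot C}$, to a birational map $\phi_g\colon C\ratmap C$, and $g\mapsto\phi_g$ is a group homomorphism because the $G$-action is one. It is injective: if $\phi_g=\mathrm{id}$ then $g$ fixes the irreducible curve $C$ of degree $\ge 2$ pointwise, but the fixed locus of a nonidentity element of $\PGL_3(\CC)$ is a union of proper linear subspaces and cannot contain $C$, so $g$ is the identity. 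Since each $K_i$ is $G$-invariant and $C=\overline{g\cdot C}$, the prolongation identity $g\cdot j^r_C(C)=j^r_{g\cdot C}(g\cdot C)$ gives $j^r_C(g\cdot p)=g\cdot j^r_C(p)$ for generic $p$, whence $\sigma_C\circ\phi_g=\sigma_C$. So $\Sym(C,G)$ acts faithfully on $C$ by birational automorphisms over $\overline{\mathcal S_C}$, and $\sigma_C$ factors through the rational quotient of $C$ by this action.

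Next I would dispose of the infinite case. As the stabiliser of $[C]$ for the action of $G$ on the projective space of degree-$d$ plane curves, $\Sym(C,G)$ is a closed subgroup of $G$; if it is infinite its identity component is positive-dimensional, and (by the same fixed-locus fact it cannot act trivially on $C$) it then acts on $C$ with a dense orbit, so $\sigma_C$, being constant on orbits, is constant and $\mathcal S_C$ (equivalently $S_C$) is a single point. In particular, whenever $\sigma_C$ is generically finite, $\Sym(C,G)$ must be finite. Granting this, together with the fact that a rational map from a curve is either generically finite or constant, all remaining implications of the two clauses will follow formally once we prove: if $|\Sym(C,G)|=n<\infty$, then $\sigma_C$ is generically $n:1$.

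So assume $|\Sym(C,G)|=n<\infty$. The crux is the \textbf{Key Lemma:} for generic $p,q\in C$ with $\sigma_C(p)=\sigma_C(q)$ there exists $g\in\Sym(C,G)$ with $q=g\cdot p$. Granting it, the generic fibre of $\sigma_C$ is a single orbit of the faithful finite action of $\Sym(C,G)$ on $C$, and this orbit is free because a nonidentity automorphism of a curve has only finitely many fixed points; hence the generic fibre has exactly $n$ points and $\sigma_C$ is generically $n:1$. To prove the Key Lemma: since $C$ is non-exceptional, for generic $p,q$ the jets $j^r_C(p),j^r_C(q)$ lie in $U_r$; then $\sigma_C(p)=\sigma_C(q)$ says $K_1$ and $K_2$ agree at these jets, and since $\{K_1,K_2\}$ separates orbits on $U_r$ there is $g\in G$ with $j^r_C(q)=g\cdot j^r_C(p)=j^r_{g\cdot C}(g\cdot p)$. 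Comparing the first two coordinates gives $q=g\cdot p$, and comparing the rest shows that $C$ and $\overline{g\cdot C}$ have contact of order $r\ (\ge 1)$ at $q$; it remains only to force $\overline{g\cdot C}=C$, for then $g\in\Sym(C,G)$.

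Ruling out $\overline{g\cdot C}\ne C$ is where the real work lies. If the Key Lemma failed, there would be an irreducible curve $V\subset C\times C$ parametrising such pairs $(p,q)$ with both projections to $C$ dominant (otherwise $\sigma_C$ is constant, contradicting finiteness of $\Sym(C,G)$) and with $\overline{g(t)\cdot C}\ne C$ generically on $V$. The associated family $t\mapsto[\overline{g(t)\cdot C}]$ lies in the $G$-orbit of $[C]$ in the space of degree-$d$ curves and is either constant or traces a curve. If it is a constant $[C_\ast]$ with $C_\ast\ne C$, then $C_\ast$ makes contact of order $\ge 1$ with $C$ at the infinitely many distinct points $q(t)$, yielding infinitely many intersection points of multiplicity $\ge 2$ of two distinct irreducible degree-$d$ curves, which is impossible by B\'ezout. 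The main obstacle I anticipate is the genuinely varying case: a one-parameter family of degree-$d$ curves, all $G$-equivalent to $C$, each making $r$-th order contact with $C$ at a point, with these contact points sweeping out $C$. Excluding such a family is exactly where non-exceptionality of $C$ with respect to $\{K_1,K_2\}$ enters essentially — it provides the nondegeneracy of $j^r_C$ transverse to the foliation of $J^r$ by $G$-orbits that rules the family out — and it is the content of \cite[Lemma 2.34]{KRV18}. Once the Key Lemma is established, $\sigma_C$ is generically $n:1$, and by the reduction above both clauses of the theorem follow.
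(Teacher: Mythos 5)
The paper gives no proof of this theorem at all: it is asserted to ``follow from Lemma 2.34 and Theorem 2.38 in [KRV18],'' so there is no in-paper argument to compare yours against. Your reconstruction of what such a proof must contain is structurally sound and goes well beyond the paper's treatment: the faithfulness of $\Sym(C,G)\to \mathrm{Bir}(C)$ (via the linear fixed locus of a nonidentity projective transformation), the invariance $\sigma_C\circ\phi_g=\sigma_C$, the disposal of the infinite case by a positive-dimensional identity component acting with dense orbit, the formal reduction of both clauses to the single implication ``$|\Sym(C,G)|=n<\infty \Rightarrow \sigma_C$ generically $n{:}1$,'' the use of orbit separation on $U_r$ to produce $g\in G$ with $q=g\cdot p$ and $j^r_C(q)=j^r_{\overline{g\cdot C}}(q)$, and the B\'ezout argument disposing of a constant translate $C_*\neq C$ are all correct.

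That said, the one step you defer---excluding a one-parameter family of distinct translates $\overline{g(t)\cdot C}$, each osculating $C$ to order $r$ at a point sweeping out $C$---is not a technical loose end but the entire analytic content of the theorem, and the ``transversality to the foliation of $J^r$ by $G$-orbits'' you invoke is a slogan rather than an argument. Note where the hypothesis you have not yet used must enter: non-exceptionality requires $\partial K_2/\partial y^{(r)}\neq 0$ along $j^r_C(C)$, and this is exactly what makes the relation $S_C(K_1,K_2)=0$ an order-$r$ algebraic ODE with locally finitely many branches for $y^{(r)}$ in terms of the lower-order jet. Two non-exceptional curves with the same signature and the same $r$-jet at a suitably generic point then coincide locally by uniqueness of solutions, hence globally by irreducibility. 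Applied to $C$ and $\overline{g\cdot C}$ (which is non-exceptional with the same signature), this yields $\overline{g\cdot C}=C$ for each individual pair $(p,q)$, so the constant-versus-varying dichotomy you set up is not needed once this lemma is in hand. As written, your proposal is a correct skeleton whose load-bearing step is outsourced to essentially the same citation the paper itself relies on; to make it self-contained you would need to supply this uniqueness argument.
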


\begin{example}\label{Ex:E2Inv}

Consider the action of the Euclidean group $\E(\CC)$ on curves in $\CC^2$ (defined in Definition \ref{def:EuclideanGroup}). In Example \ref{ex:ProlongedEuclidean} we saw that $\kappa^2$ is rational invariant for the prolonged action of $\E(\CC)$ on $J^2$. Similarly the function
$$
\kappa_s^2 = \frac{\left(y_{xxx}\left(1+y_x^2\right)-3y_xy_{xx}^2\right)^2}{\left(1+y_x^2\right)^6},
$$
\noindent representing the square of the derivative of curvature with respect to arc length, is also a rational invariant for this action. Together the pair $\mathcal{I} = \{ \kappa^2, \kappa_s^2\}$ is a classifying set of rational differential invariants for the action of $\E(\CC)$ on curves in $\CC^2$. Moreover, there are no $\mathcal{I}$-exceptional algebraic curves---for details see \cite[Sec 4.1]{RuddyThesis}. By Theorem~\ref{Thm:Signature}, the equivalence class of an algebraic curve $C$ under $\mathcal{E}_2(\CC)$ is completely determined by $\overline{\mathcal{S}_C}$.

Consider the two ellipses $C_1$ and $C_2$ defined by the zero sets of
\begin{align*}
F_1(x,y)&=x^2 + y^2 +xy-1\\
F_2(x,y)&=x^2+y^2 - xy -5x +y +6,
\end{align*}
\noindent respectively. The signature maps $\sigma_{C_1}$, $\sigma_{C_2}$ are rational maps on $C_1, C_2$ defined by
\begin{center}
\begin{tabular}{ccl}
$\sigma_{C_1}(x,y)$ 
$= \Big(\kappa^2|_{C_1}, \kappa^2_s|_{C_1}\Big)$& $=$ &
$\Big(\frac{36 \, (F_1(x,y)+1)^2}{(5x^2 + 8 x y + 5y^2)^3},$ 
$ \frac{2916 \, (x-y)^2 \, (x+y)^2 \, (F_1(x,y)+1)^2}{(5x^2 + 8 xy + 5y^2)^6}\Big)$,\\
$\sigma_{C_2}(x,y)$  
$= \Big(\kappa^2|_{C_2}, \kappa^2_s|_{C_2}\Big) $ & $=$ &
$\Big(\frac{36 \, (F_2(x,y)+1)^2}{(5x - 8 xy + 5y^2 - 22 x + 14 y + 26)^3},$
$  \frac{2916 \, (x-y-2)^2 \, (x+y-4)^2 \, (F_2(x,y)+1)^2}{(5x - 8 xy + 5y^2 - 22 x + 14 y + 26)^6}\Big).$
\end{tabular}
\end{center}

\noindent From the above, we see that each map $\sigma_{C_i}$ has an equivalent expression moduolo $\mathcal{I}_{C_i}$ where the total degrees drop by $4.$\footnote{We detect this automatically with an implementation of rational function simplification from~\cite{ratsimp} in Macaulay2~\cite{M2}.} Both ellipses $C_1$ and $C_2$ have symmetry groups under $\E(\CC)$ of cardinality 4 (generated by a reflection and $180^{\circ}$-degree rotation). Thus, by Theorem \ref{thm:diffsigsymgroup}, the above maps are generically $4:1$. We can directly compute the signature polynomials $S_{C_1}$ and $S_{C_2}$ using elimination:
\begin{align*}
S_{C_1}(\kappa_1,\kappa_2) = S_{C_2}(\kappa_2,\kappa_2) = 2916\,{k}_{1}^{6}-13608\,{k}_{1}^{5}+972\,{k}_{1}^{4}{k}_{2}+2187\,{k}_{
      1}^{4}+1944\,{k}_{1}^{3}{k}_{2}+108\,{k}_{1}^{2}{k}_{2}^{2}+4\,{k}_{2}^{3
      }.
\end{align*}

Since these two curves have the same signature polynomial, the Zariski-closure of their images are equal, i.e. $\overline{\mathcal{S}_{C_1}}=\overline{\mathcal{S}_{C_2}}.$ Thus by Theorem \ref{Thm:Signature} the two curves are $\E(\CC)$-equivalent.

\end{example}

\begin{example}\label{ex:EqAffDiff}
For the action of the equi-affine group $\SA(\CC)$ on curves in $\CC^2$ (defined in Definition \ref{def:EquiAffineGroup}), we can again construct a differential signature map from rational differential invariants. The following pair
$$
(K_1,K_2) = \left(\frac{\left(3y^{(4)}y^{(2)}-5\left(y^{(3)}\right)^2\right)^3}{\left(y^{(2)}\right)^8},\frac{9y^{(5)}\left(y^{(2)}\right)^2-45y^{(4)}y^{(3)}y^{(2)}+40\left(y^{(3)}\right)^3}{\left(y^{(2)}\right)^4}\right)
$$
forms a classifying set of rational differential invariants. Here $K_1 = \mu^3$ where $\mu$ is classical affine curvature \cite{Gugg}. For details on classifying sets of rational differential invariants for $\SA(\CC)$ and other classical linear groups see \cite[Sec. 4.1]{RuddyThesis}
\end{example}

\subsection{Joint signatures}\label{sec:jointsig}
In \cite{O01}, the author considers the use of \textit{joint} differential signatures to determine equivalence. For the action of $G$ on $\mathbb{C}^2$ given by $g\cdot (x,y)=(\overline{x},\overline{y})$, consider the induced action on the Cartesian product space $(\mathbb{C}^2)^{n}=\mathbb{C}^2\times \mathbb{C}^2\times \hdots\times\mathbb{C}^2$ given by
$$
g\cdot (x_1,y_1,x_2,y_2,\hdots, x_n,y_n)=(\overline{x}_1,\overline{y}_1,\overline{x}_2,\overline{y}_2,\hdots, \overline{x}_n,\overline{y}_n)
$$
where $\overline{x}_i=\overline{x}|_{x=x_i,y=y_i}$ and $\overline{y}_i=\overline{y}|_{x=x_i,y=y_i}$. For a curve $C\subset\mathbb{C}^2$ denote the Cartesian product by $C^{ n}=C\times C\times\hdots\times C\subset (\mathbb{C}^2)^{n}$. Then we can see that two curves $C_0$ and $C_1$ are $G$-equivalent if and only if their Cartesian products $C_0^{ n},C_1^{ n}$ are $G$-equivalent under the induced action on $(\mathbb{C}^2)^{ n}$.

The advantage of considering $G$-equivalence of products of the curve $C$ is that the order of the differential invariants needed to define a differential signature on this space can be reduced. Though the number of invariants required may increase, the lower order of the differential invariants may result in a more noise-resistant differential signature. In fact, for a large enough product space, it is often possible to construct a differential signature from `$0$-th order' differential invariants, or \textit{joint invariants}, which we refer to as a \textit{joint signature}.

Consider the action of $\mathcal{E}_2(\CC)$ on $\mathbb{C}^2$ as defined in Definition \ref{def:EuclideanGroup}. This induces a diagonal action on the product space $(\mathbb{C}^2)^{n}$ whose joint invariants are the squared inter-point distance functions
$$
d_{jk}(x_j,y_j,x_k,y_k)=(x_j-x_k)^2+(y_j-y_k)^2,
$$
where $j< k$ and $j,k\in \{1,\hdots,n\}$. Let the map $d_n: C^n\rightarrow \CC^{n(n-1)/2}$ be the map which takes an $n$-tuple of points on $C$ and outputs all the inter-point distances, i.e.
\begin{equation}\label{eq:dmap}
(x_1,y_1,\hdots, x_n,y_n)\mapsto (d_{12},d_{13},\hdots, d_{1n},\hdots, d_{(n-1)n}).
\end{equation}
Additionally let $W_n$ be the Zariski-open subset of $ (\mathbb{C}^2)^n$ where all the inter-point distances do not vanish:
$$
W_n = \{ p\in (\mathbb{C}^2)^n\,|\, d_{jk}(p)\neq 0\,\,\, \text{for $j<k$ and $j,k\in \{1,\hdots,n\}$}\},
$$
with the convention that $W_1 = \CC^2.$ To define a joint signature for algebraic curves under $\mathcal{E}_2(\CC)$, we take $n=4$ and follow a similar construction as the joint signature of smooth curves in $\mathbb{R}^2$ under the action of $\mathcal{E}_2(\mathbb{R})$ (see \cite[Ex. 8.2]{O01}).

\begin{defn}\label{def:jointsig}
The \textit{Euclidean joint signature} of an algebraic curve $C\subset \mathbb{C}^2$ under the action of $\mathcal{E}_2(\CC)$, which we denote $\JS_C$, is the image of the polynomial map $d_4: C^{4}\rightarrow \CC^6$ defined as in \eqref{eq:dmap}.
\end{defn}

\noindent We first show that these invariant functions characterize almost all orbits of the action of $\mathcal{E}_2(\CC)$ on $(\mathbb{C}^2)^{3}$ and $(\mathbb{C}^2)^{ 4}$.

\begin{prop}\label{prop:invsep}
The polynomial invariants $\mathcal{I}_3=\{d_{12},d_{13},d_{23}\}$ separate orbits on $W_3$ for the induced action of $\mathcal{E}_2$ on $(\mathbb{C}^2)^{ 3}$ and the set $\mathcal{I}_4=\{d_{12},d_{13},d_{23},d_{14},d_{24},d_{34}\}$ separates orbits in $W_4$ for the induced action of $\mathcal{E}_2(\CC)$ on $(\mathbb{C}^2)^{ 4}$.
\end{prop}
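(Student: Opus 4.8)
The plan is to treat both cases ($n=3$ and $n=4$) with a single argument. One direction of the claimed equivalence is immediate: each $d_{jk}$ is by construction invariant under the diagonal action of $\mathcal{E}_2(\CC)$ on $(\CC^2)^n$, so points in the same orbit have equal invariants. For the converse, fix $n\in\{3,4\}$ and suppose $p=(p_1,\dots,p_n)$ and $q=(q_1,\dots,q_n)$ lie in $W_n$ and satisfy $d_{jk}(p)=d_{jk}(q)$ for all $j<k$; I must produce $g\in\mathcal{E}_2(\CC)$ with $g\cdot p=q$.

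First I would reduce to a statement about a single quadratic space. Let $Q(v)=v_1^2+v_2^2$, with associated symmetric bilinear form $B$, and note that $\mathcal{E}_2(\CC)=\CC^2\rtimes O_2(\CC)$, where the orthogonal group $O_2(\CC)=O(Q)$ is exactly the group of linear parts occurring in the matrices of Definition~\ref{def:EuclideanGroup} (as one checks directly). After translating so that $p_1$ and $q_1$ sit at the origin, put $v_j=p_j-p_1$ and $w_j=q_j-q_1$ for $j\ge 2$. Using $d_{1j}=Q(v_j)$ and $d_{jk}=Q(v_j-v_k)$, a routine computation recovers the Gram matrix $G=(B(v_j,v_k))_{2\le j,k\le n}$ from the distance data via $2B(v_j,v_k)=d_{1j}+d_{1k}-d_{jk}$, and the same formula applies to the $w_j$. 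Thus $(v_j)_{j\ge 2}$ and $(w_j)_{j\ge 2}$ have the same Gram matrix $G$, and it suffices to find $g\in O_2(\CC)$ with $gv_j=w_j$ for all $j$.

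The key step is to show that the spans $U=\spa(v_2,\dots,v_n)$ and $U'=\spa(w_2,\dots,w_n)$ are nondegenerate subspaces of $(\CC^2,Q)$. Here $\rk G=\dim U-\dim\operatorname{rad}(U)=\dim U'-\dim\operatorname{rad}(U')$, and inside $\CC^2$ this quantity equals $2$ only for $\CC^2$ itself, $1$ only for an anisotropic line, and $0$ only for the zero subspace or an isotropic line. So it is enough to rule out $\rk G=0$, which would force $Q(v_j)=d_{1j}(p)=0$ for all $j\ge 2$, contradicting $p\in W_n$. Hence $\rk G\in\{1,2\}$ and $U,U'$ are each an anisotropic line or all of $\CC^2$, in particular nondegenerate. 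Consequently $v_j\mapsto w_j$ defines a genuine linear isometry $U\to U'$: a dependence $\sum c_jv_j=0$ gives $Gc=0$, whence $\sum c_jw_j\in U'\cap(U')^{\perp}=\operatorname{rad}(U')=0$, so the map is well defined, and $B$ is preserved on the spanning vectors, hence on all of $U$. I would finish by invoking Witt's extension theorem to extend this isometry to some $g\in O_2(\CC)$; pre- and post-composing with the translations $p_1\mapsto 0$ and $0\mapsto q_1$ turns $g$ into an element of $\mathcal{E}_2(\CC)$ sending $p_j$ to $q_j$ for every $j$.

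The heart of the matter is the span-nondegeneracy step: over $\CC$ the form $Q=x^2+y^2$ has nonzero isotropic vectors, so ``$d_{jk}=0$'' no longer forces $p_j=p_k$, and the hypothesis $p,q\in W_n$ is precisely what keeps the relevant spans nondegenerate so that Witt's theorem and the well-definedness argument go through. If one prefers to avoid quoting Witt's theorem, the extension can be carried out by hand in $\CC^2$ after diagonalizing $Q$ via the null coordinates $x\pm iy$, in which $O_2(\CC)\cong\CC^{\times}\rtimes\ZZ/2$ acts transparently; and for $n=3$ one can argue even more directly, using that $SO_2(\CC)$ acts transitively on each nonzero level set of $Q$ with stabilizer of order two (a single reflection), which after matching $v_2$ to $w_2$ reduces the problem to pinning down the one remaining point.
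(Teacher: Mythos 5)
Your argument is correct, but it takes a genuinely different route from the paper. The paper's proof is an explicit moving-frame computation: using $d_{12}\neq 0$ it normalizes $p$ (and $q$) so that $p_1=(0,0)$ and $p_2=(0,y_2)$ via the transformation in its equation~\eqref{eq:movingframe}, then solves the remaining distance equations coordinate-by-coordinate for $p_3$ (and $p_4$), resolving the resulting sign ambiguities with reflections about the coordinate axes; the four-point case requires a small case split on whether $x_3=0$. You instead reduce to the quadratic space $(\CC^2,\,x^2+y^2)$, recover the Gram matrix of the difference vectors by polarization, use the hypothesis $p\in W_n$ to rule out a totally isotropic span (so the spans $U,U'$ are anisotropic lines or all of $\CC^2$, hence nondegenerate), and conclude via Witt's extension theorem applied to the linear parts $O_2(\CC)$ of $\E(\CC)$. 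Your route is uniform in $n$ (and would work verbatim for $\E_m(\CC)$ acting on $(\CC^m)^n$), isolates conceptually why $W_n$ is needed (over $\CC$ the form has nonzero isotropic vectors, so vanishing distances no longer force coincident points), and in fact only uses $d_{1j}\neq 0$ rather than the full strength of $W_n$; the paper's computation is more elementary and self-contained, requiring no input beyond linear algebra, and produces the group element explicitly, which is convenient since the explicit normalization \eqref{eq:movingframe} and the order-two stabilizer it exposes are reused later (Proposition~\ref{prop:stab} and Lemma~\ref{lem:noncollinear}). The one point worth spelling out if you wrote this up in full is the identification of the linear parts in Definition~\ref{def:EuclideanGroup} with all of $O_2(\CC)$, which you correctly flag as a direct check.
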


\begin{proof}
Consider two triples of points $p=(p_i)_{i=1}^3$ and $q=(q_i)_{i=1}^3 \in (\mathbb{C}^2)^{ 3}$, where $p_i=(x_i^p,y_i^p)$ and $q_i$ is denoted similarly, that take the same values on $\mathcal{I}_3$ and lie in $W_3$. Note that $W_3$ excludes isotropic triples such as $(0,0), (1,i), (1,-i).$ We will show that both triples of points necessarily lie in the same orbit. Since $d_{12}\neq 0$ we can choose a representative from the orbit of $p$ under $\mathcal{E}_2$ such that $p_1=(0,0)$ and $p_2=(0,y_2^p)$ by applying the transformation in $\mathcal{E}_2(\CC)$ given by
\begin{equation}\label{eq:movingframe}
c=\frac{y_2^p-y_1^p}{\sqrt{d_{12}}},\, s=\frac{x_2^p-x_1^p}{\sqrt{d_{12}}},\, a=-x_1^p,\, b=-y_1^p,
\end{equation}
and similarly we can assume for $q$ that $q_1=(0,0)$ and $q_2=(0,y_2^q)$. Since $p,q\in W_3$, $y_2^p,y_2^q\neq 0$. Thus $d_{12}(p)=d_{12}(q)$ gives that $(y_2^p)^2=(y_2^q)^2$ meaning $y_2^p=\pm y_2^q$. Therefore, by reflecting about $x$-axis if necessary, we can assume $y_2^p=y_2^q$. The equations $d_{13}(p)=d_{13}(q)$ and $d_{23}(p)=d_{23}(q)$ give
\begin{align*}
(x_3^p)^2+(y_3^p)^2&=(x_3^q)^2+(y_3^q)^2\\
(x_3^p)^2+(y_2^p-y_3^p)^2&=(x_3^q)^2+(y_2^q-y_3^q)^2.
\end{align*}
Subtracting these yields $(y_2^p)^2-2y_2^py_3^p=(y_2^q)^2-2y_2^qy_3^q$ which implies $y_3^p=y_3^q$. Thus, from $d_{13}(p)=d_{13}(q)$, we have $(x_3^p)^2=(x_3^q)^2$. From this we conclude, reflecting about the $y$-axis if necessary, that $x_3^p=x_3^q$. We have now shown that $p$ and $q$ are in the same orbit.

Suppose we have two $4$-tuples of points $p=(p_i)_{i=1}^4$ and $q=(q_i)_{i=1}^4 \in (\mathbb{C}^2)^{3}$ that take the same values on $\mathcal{I}_4$ and lie in $W_4$. By the previous argument we can assume that $p_1,p_2$ have the same form as above and that $p_i=q_i$ for $i=1,2,3$. As before the equations $d_{14}(p)=d_{14}(q)$ and $d_{24}(p)=d_{24}(q)$ imply that and $y_4^q=y_4^p$ and $x_4^p=\pm x_4^q$. If $x_4^p=- x_4^q$ and $x_3^p,x_3^q=0$, then a reflection about the $y$-axis preserves the other values in $q$ and sends $x_4^q$ to $- x_4^q$. Otherwise subtracting the equations $d_{14}(p)=d_{14}(q)$ and $d_{34}(p)=d_{34}(q)$ yields $-2x_3^px_4^p=-2x_3^qx_4^q$, which implies that $x_4^p=x_4^q$. Thus $p$ and $q$ must lie in the same orbit.
\end{proof}

\noindent For any algebraic curve $C$, a generic $n$-tuple of points lies outside of $W_n$. This implies that most points on $C^3$ and $C^4$ lie in the domain of separation for $\mathcal{I}_3$ and $\mathcal{I}_4$.

\begin{lem}\label{lem:genpt}
For an algebraic curve $C\subset\CC^2$ and $n>1$, a generic $n$-tuple of points on $C^n$ lies inside $W_n$. Additionally for any fixed $(n-1)$-tuple of points in $(p_1,\hdots, p_{n-1})\in W_{n-1} \cap C^{n-1}$ and a generic point $p_n\in C$, the $n$-tuple $(p_1,\hdots,p_n)$ lies in $W_n$.
\end{lem}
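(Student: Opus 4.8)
The plan is to exploit that $W_n$ is, by definition, the complement in $(\CC^2)^n$ of the hypersurfaces $\{d_{jk}=0\}$, so that $W_n\cap C^n$ is Zariski-open in $C^n$, and $C^n$ is irreducible since $C$ is irreducible over the algebraically closed field $\CC$; it then suffices to show this open set is nonempty, because a nonempty Zariski-open subset of an irreducible variety is dense. The crux will be the following claim: for our curve $C$ (irreducible, of degree $>1$), the set $Z=\{(p,q)\in C\times C : d(p,q)=0\}$, where $d$ denotes the squared-distance form, is a \emph{proper} closed subvariety of $C\times C$. Since $(u_1)^2+(u_2)^2=(u_1+iu_2)(u_1-iu_2)$, the vanishing $d(p,q)=0$ means exactly that $q-p$ lies on one of the two isotropic lines through the origin. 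Fixing any $p_0\in C$, if $Z$ were all of $C\times C$ then $C$ would be contained in the union of the two isotropic lines through $p_0$; since $C$ is irreducible it would then lie in one of these two lines, contradicting $\deg C>1$. Hence $Z\subsetneq C\times C$, and in particular, for fixed $p_0\in C$, the slice $\{q\in C : d(p_0,q)=0\}$ --- the intersection of $C$ with the two isotropic lines through $p_0$ --- is a finite set.

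For the first assertion, I would write $\pi_{jk}\colon C^n\to C\times C$ for the projection onto the $j$th and $k$th factors, and observe that $\{d_{jk}=0\}\cap C^n=\pi_{jk}^{-1}(Z)$ is a proper closed subvariety of $C^n$ (closed because $Z$ is closed, proper because $\pi_{jk}$ is surjective and $Z\ne C\times C$). Then $W_n\cap C^n=C^n\setminus\bigcup_{j<k}\pi_{jk}^{-1}(Z)$ is the complement of a finite union of proper closed subsets of the irreducible variety $C^n$, hence a nonempty dense Zariski-open subset. This is precisely the statement that a generic $n$-tuple on $C^n$ lies in $W_n$.

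For the second assertion, I would fix $(p_1,\dots,p_{n-1})\in W_{n-1}\cap C^{n-1}$. The inequalities $d_{jk}\ne 0$ for $j<k\le n-1$ already hold by hypothesis, so membership $(p_1,\dots,p_n)\in W_n$ reduces to the finitely many conditions $d_{jn}(p_j,p_n)\ne 0$ for $j=1,\dots,n-1$. By the claim above, for each fixed $p_j$ the set $\{q\in C : d(p_j,q)=0\}$ is finite; hence so is their union over $j=1,\dots,n-1$, and any $p_n\in C$ in the cofinite --- hence generic --- complement of that union yields $(p_1,\dots,p_n)\in W_n$. (This also reproves the first assertion by an easy induction on $n$, starting from $W_2\cap C^2=C^2\setminus Z\ne\emptyset$, but the direct argument above is cleaner.)

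I expect the only real content to be the claim that $Z$ is proper, i.e. ruling out a curve all of whose secant directions are isotropic. This is exactly where irreducibility of $C$ and the standing hypothesis $\deg C>1$ enter: the excluded curves are precisely the isotropic lines $\{(t,it)+p_0\}$ and $\{(t,-it)+p_0\}$, and no irreducible curve of degree $>1$ is contained in a union of two such lines. Everything else is the routine ``nonempty Zariski-open $\Rightarrow$ generic'' argument applied to the irreducible variety $C^n$.
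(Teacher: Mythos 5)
Your proof is correct and takes essentially the same approach as the paper's: both hinge on the observation that for fixed $p_0\in C$ the locus $\{q\in C : d(p_0,q)=0\}$ is the intersection of $C$ with the two isotropic lines through $p_0$, which cannot contain an irreducible curve of degree $>1$ and is therefore finite. The only cosmetic difference is that you establish the first assertion directly, as the complement in the irreducible variety $C^n$ of the pullbacks $\pi_{jk}^{-1}(Z)$ of the proper closed subvariety $Z\subset C\times C$, whereas the paper deduces it by induction from the second assertion.
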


\begin{proof}
For $n=2$, fix any $p_1 = (x_1, y_1) \in C.$ If $d_{1,2}=0$ for all $(x_2, y_2) \in C,$ then $C$ must lie in a union of lines defined by 
$$
\{ (x_2, y_2) \in \CC^2 \mid (x_1-x_2+iy_1-iy_2) (x_1-x_2-iy_1+iy_2)=0 \}.
$$
Since $C$ is irreducible, this contradicts $\deg(C) > 1.$ Thus the set $
U_{2, p_1} = \{ p_2 \in C \mid d_{1,2} \ne 0 \},$
which is Zariski-open in $C,$ is also nonempty. Thus, for any particular $p_1\in C,$ there exists $p_2$ with $(p_1, p_2) \in W_2 \cap C^2,$ from which both claims follow. Inductively, we fix any $(p_1,\ldots , p_{n-1})\in W_{n-1} \cap C^{n-1}.$ As before, the sets
\[
U_{i, p_1,\ldots p_{n-1} } = \{ p_n \in C \mid d_{i n} \ne 0 \}
\]
are open and nonempty. Thus a generic $p_n \in C$ lies in their intersection, and hence $(p_1,\ldots , p_n)\in W_n.$
\end{proof}

\noindent We now characterize the stabilizer groups of points in $W_2$ and $W_3$ under $\E(\CC)$.

\begin{prop}\label{prop:stab}
The stabilizer subgroups of $p \in W_2$ and $p\in W_3$ under the action of $\E(\CC)$ are finite subgroups.
\end{prop}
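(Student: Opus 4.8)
The plan is to compute the stabilizer of a point $p$ explicitly and show the resulting constraints force only finitely many group elements. First I would handle $p \in W_2$. Write $p = (p_1, p_2)$ with $p_i = (x_i, y_i)$. A group element $g \in \E(\CC)$ fixing $p$ must fix both $p_1$ and $p_2$. Using the translation part of $g$, fixing $p_1$ pins down $a, b$ in terms of $c, s, \alpha$ (and $x_1, y_1$). Then the condition that $g$ also fixes $p_2$ becomes a linear condition on the difference $p_2 - p_1$, namely that the rotation/reflection matrix $\begin{bmatrix} \alpha c & \alpha s \\ -s & c\end{bmatrix}$ fixes the nonzero vector $p_2 - p_1$ (here "nonzero" is exactly where I use $p \in W_2$, so that $d_{12} \neq 0$ means $p_2 - p_1$ is not the zero vector — note it could still be isotropic, so I should be careful). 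Together with $c^2 + s^2 = 1$ and $\alpha \in \{-1,1\}$, fixing a nonzero vector should leave only finitely many choices of $(\alpha, c, s)$: a rotation fixing a nonzero vector $v$ with $v^T v \neq 0$ must be the identity, and a reflection fixing such a $v$ is determined by $v$ up to finitely many choices (generically exactly one). This gives finiteness for $W_2$.

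For $p \in W_3$, the stabilizer of $p$ is contained in the stabilizer of its first two coordinates $(p_1, p_2)$, which is already finite by the $W_2$ case, since forgetting the third point is a $G$-equivariant projection $W_3 \to W_2$. So the $W_3$ statement follows immediately from the $W_2$ statement — in fact I would state it this way to avoid redoing the computation. More conceptually, I could instead invoke Proposition~\ref{prop:invsep}: the map $d_2$ (resp.\ $d_3$) separates orbits on $W_2$ (resp.\ $W_3$), so the fiber of $d_n$ through $p$ is a single orbit $G \cdot p$; since $d_n$ is a polynomial map between varieties of the same dimension when restricted appropriately — actually the cleanest argument is dimension-counting: $\dim \E(\CC) = 3$ (parameters $a, b$, and the angle, with $\alpha$ discrete), $\dim W_2 = 4 > 3$, and since the generic orbit in $W_2$ is $3$-dimensional (the action on a generic pair is free up to finite ambiguity), the stabilizer is $0$-dimensional hence finite. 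I would present the explicit computation as the primary argument since it is short and self-contained, and it also shows the stabilizer is nontrivial in general (e.g.\ the reflection swapping behavior), which is consistent with later claims about symmetry groups.

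The main obstacle is the presence of \emph{isotropic} directions over $\CC$: the quadratic form $d_{12}$ does not detect whether $p_2 - p_1$ lies on the isotropic cone, and for an isotropic vector $v$ the group of rotations fixing $v$ could a priori be larger. I need to check that even when $p_2 - p_1$ is isotropic but nonzero, the rotation subgroup fixing it is still finite (indeed trivial): if $(c,s)$ with $c^2 + s^2 = 1$ satisfies $\begin{bmatrix} c & s \\ -s & c\end{bmatrix} v = v$ for $v = (v_1, v_2) \neq 0$, then $(c-1) v_1 + s v_2 = 0$ and $-s v_1 + (c-1) v_2 = 0$; the determinant of this linear system in $(c-1, s)$ is $v_1^2 + v_2^2 = d_{12}(p) \neq 0$, so $c = 1, s = 0$ regardless of whether $v$ is isotropic. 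The reflections ($\alpha = -1$) are handled the same way, with the linear system now having matrix depending on $v$ and determinant again a nonzero multiple of $d_{12}(p)$, giving a unique $(c,s)$. So the nonvanishing of $d_{12}$ is exactly the hypothesis that makes the linear algebra nondegenerate, and there is no genuine obstacle once this is spelled out — the "isotropic" worry dissolves. I would close by noting the same reasoning bounds $|\mathrm{Stab}(p)|$ by a small explicit constant ($2$ for $W_2$), though only finiteness is needed here.
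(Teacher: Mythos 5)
Your proof is correct and takes essentially the same route as the paper: both arguments compute the stabilizer of a pair explicitly from the parametrization of $\E(\CC)$, use $d_{12}(p)\neq 0$ as the nondegeneracy hypothesis to conclude the stabilizer is $\{\mathrm{id}, \text{one reflection}\}$, and then dispose of the $W_3$ case by noting the stabilizer of a triple sits inside the stabilizer of its first two entries. The only cosmetic difference is that the paper first normalizes $p$ to $(0,0,0,y_2)$ via the moving-frame transformation \eqref{eq:movingframe} before reading off the constraints, whereas you solve the linear system at a general $p$ and observe that its determinant is $\pm d_{12}(p)\neq 0$ (which also cleanly dispatches the isotropic worry the paper leaves implicit).
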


\begin{proof}
The stabilizer of a point $p\in (\CC^2)^2$ is the subgroup of $\E(\CC)$ given by
$$
\E(\CC)_p = \{g\in \E(\CC)\, |\, g\cdot p=p \}.
$$
The size of the stabilizer of a point is preserved by the action of the group. Since $d_{12}(p)\neq 0$, by applying the transformation in \eqref{eq:movingframe}, we can assume $p$ has the form $p=(p_1,p_2)=(0,0,0,y_2)$ where $y_2\neq 0$. Given the parameterization of $\E(\CC)$ in  Example \ref{Ex:E2Inv}, $g\cdot p=p$ immediately implies that $a=b=0$ and that $sy_2=0$. Thus $\E(\CC)_p$ consists of either the identity transformation or a reflection about the $y$-axis. The same result immediately follows for $p\in W_3$, since $(p_1,p_2,p_3)\in W_3$ implies that $(p_1,p_2)\in W_2$.
\end{proof}

\begin{lem}\label{lem:notdomequiv}
For plane curves $C_0,C_1$, suppose that there exists $p=(p_1,p_2)\in C_0^2, C_1^2$ such that $p\in W_2$ and
$$
d_3(p_1\times p_2 \times C_0) = d_3(p_1\times p_2 \times C_1).
$$
Then $C_0=C_2$ where $C_2 : = h\cdot C_1$ for some $h$ in the stabilizer subgroup of $p$ under $\E(\CC)$.
\end{lem}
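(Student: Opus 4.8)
The idea is to put $p$ in normal form by a moving frame, after which the hypothesis becomes an equality of images under the squaring map $\psi(x,y)=(x^2,y)$, whose fibres are pairs of points related by a reflection lying in the stabilizer of $p$. Since $p\in W_2$ we have $d_{12}(p)\ne 0$, so the transformation of~\eqref{eq:movingframe} carries $p_1$ to $(0,0)$ and $p_2$ to $(0,y_2)$ with $y_2^2=d_{12}(p)\ne 0$. Replacing $C_0$, $C_1$, $p$ by $g\cdot C_0$, $g\cdot C_1$, $g\cdot p$ for this $g\in\E(\CC)$ only conjugates $\E(\CC)_p$ (and preserves irreducibility and degree, as $g$ acts by an affine automorphism of $\CC^2$), so I may assume $p_1=(0,0)$ and $p_2=(0,y_2)$. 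In these coordinates $d_{13}=x^2+y^2$ and $d_{23}=x^2+(y_2-y)^2$ at a point $(x,y)$, while the $d_{12}$-coordinate is the common constant $y_2^2$; hence $d_3(p_1\times p_2\times C_i)=\{y_2^2\}\times\tau(\psi(C_i))$, where $\tau(u,v)=(u+v^2,\,u+(y_2-v)^2)$. Since $y_2\ne 0$, subtracting the two coordinates of $\tau$ recovers $v$ and then $u$, so $\tau$ is injective, and the hypothesis is equivalent to the plain equality of images $\psi(C_0)=\psi(C_1)$.

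I would then argue pointwise. Let $\rho(x,y)=(-x,y)$ be the reflection about the $y$-axis; one checks $\rho\in\E(\CC)$, and since $p_1,p_2$ lie on the $y$-axis, $\rho\cdot p=p$ --- in fact, by the computation in the proof of Proposition~\ref{prop:stab}, $\E(\CC)_p=\{\mathrm{id},\rho\}$. The fibres of $\psi$ are exactly $\psi^{-1}(\psi(x,y))=\{(x,y),\rho(x,y)\}$. Because $C_0$ is irreducible of degree $>1$ it is not the line $\{x=0\}$, so $C_0\setminus\{x=0\}$ is dense in $C_0$; for $q$ in this set, $\psi(q)\in\psi(C_0)=\psi(C_1)$ produces $q'\in C_1$ with $\psi(q')=\psi(q)$, whence $q'\in\{q,\rho(q)\}$ and $q\in(C_0\cap C_1)\cup(C_0\cap\rho(C_1))$. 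The right-hand side is Zariski-closed and contains a dense subset of the irreducible curve $C_0$, so it equals $C_0$; by irreducibility, either $C_0\subseteq C_1$ or $C_0\subseteq\rho(C_1)$, and equality of irreducible curves of the same dimension forces $C_0=C_1$ or $C_0=\rho(C_1)$. In either case $C_0=h\cdot C_1$ with $h\in\{\mathrm{id},\rho\}=\E(\CC)_p$; undoing the change of frame replaces $h$ by $g^{-1}hg$, which still fixes the original $p$, and this is the claimed $C_2=h\cdot C_1$.

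The computations are routine; the only steps that need attention are checking that $\rho$ is simultaneously an element of $\E(\CC)$ and of the stabilizer of the normalized $p$, and noticing that the standing degree hypothesis on $C_0$ is exactly what keeps $\psi(C_0)$ off the ramification locus $\{u=0\}$, so that the generic fibre of $\psi$ over $\psi(C_0)$ genuinely has two points. I anticipate no serious obstacle beyond this bookkeeping: the argument is the elementary two-to-one structure of $(x,y)\mapsto(x^2,y)$ together with the moving-frame reduction already used for Proposition~\ref{prop:invsep}.
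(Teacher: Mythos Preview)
Your proof is correct and is essentially the same argument as the paper's: both show that a generic $q\in C_0$ must be related to some point of $C_1$ by an element of the two-element stabilizer $\E(\CC)_p$, and then conclude by irreducibility. The paper does this abstractly by invoking Proposition~\ref{prop:invsep}, Proposition~\ref{prop:stab}, and Lemma~\ref{lem:genpt}, whereas you make the same content explicit by normalizing via the moving frame and factoring $d_3(p_1\times p_2\times\cdot)$ through the injective affine map $\tau$ composed with the two-to-one map $\psi(x,y)=(x^2,y)$, whose fibres are exactly the $\rho$-orbits.
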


\begin{proof}
By Lemma \ref{lem:genpt}, for a generic point $q\in C_0$, the $3$-tuple $(p_1,p_2,q)\in W_3$. Since both curves have the same image under $d_3$, there exists a point $r\in C_1$ such that $r\in d_3^{-1}(p_1,p_2,q)$. By Proposition \ref{prop:invsep}, both triples $(p_1,p_2,q)$ and $(p_1,p_2,r)$ lie in the same orbit under $\E(\CC)$, and hence there exists $g\in \E(\CC)$ such that $g\cdot (p_1,p_2,q)= (p_1,p_2,r)$. However, this implies that $g\in \E(\CC)_{(p_1,p_2)}$. By Proposition \ref{prop:stab}, $\E(\CC)_{(p_1,p_2)}=\{e,h\}$ where $h\in \E(\CC)$ is a reflection about the line containing $p_1$ and $p_2$. Therefore $q=r$ or $h\cdot q = r$, implying that $C_1$ shares infinitely many points with $C_0$ or $h\cdot C_0$, proving the lemma.
%
%
\end{proof}

\begin{lem}\label{lem:domequiv}
For plane curves $C_0,C_1$, suppose that there exists a $3$-tuple $p=(p_1,p_2,p_3)\in C_0^3, C_1^3$ such that $p\in W_3$ and
$$
d_4(p_1\times p_2 \times p_3\times C_0) = d_4(p_1\times p_2 \times p_3 \times C_1).
$$
Then $C_0=C_2$ where $C_2 : = h\cdot C_1$ for some $h$ in the stabilizer subgroup of $p$ under $\E(\CC)$.
\end{lem}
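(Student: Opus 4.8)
The plan is to adapt the proof of Lemma~\ref{lem:notdomequiv} almost verbatim, replacing triples by $4$-tuples and $W_3$ by $W_4$, and then to invoke the separation statement of Proposition~\ref{prop:invsep} for $W_4$ together with the stabilizer computation of Proposition~\ref{prop:stab}. Fix the triple $p=(p_1,p_2,p_3)\in W_3$. First I would apply Lemma~\ref{lem:genpt}: since $(p_1,p_2,p_3)\in W_3\cap C_0^3$, a generic point $q\in C_0$ satisfies $(p_1,p_2,p_3,q)\in W_4$. By hypothesis $d_4(p_1\times p_2\times p_3\times C_0)=d_4(p_1\times p_2\times p_3\times C_1)$, so there is $r\in C_1$ with $d_4(p_1,p_2,p_3,r)=d_4(p_1,p_2,p_3,q)$; as the common $d_4$-value has all coordinates nonzero, $(p_1,p_2,p_3,r)\in W_4$ as well. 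Proposition~\ref{prop:invsep} then gives $g\in\E(\CC)$ with $g\cdot(p_1,p_2,p_3,q)=(p_1,p_2,p_3,r)$, so in particular $g$ fixes $p_1,p_2,p_3$, i.e.\ $g$ lies in the stabilizer $\E(\CC)_p$, and $g\cdot q=r$.

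Next I would pin down $\E(\CC)_p$. Since $(p_1,p_2,p_3)\in W_3$ forces $(p_1,p_2)\in W_2$, the normalization~\eqref{eq:movingframe} used in the proof of Proposition~\ref{prop:stab} shows that the stabilizer of $(p_1,p_2)$ is $\{e,h_0\}$, where $h_0$ is the reflection across the line spanned by $p_1$ and $p_2$. Hence $\E(\CC)_p\subseteq\{e,h_0\}$, with $h_0\in\E(\CC)_p$ precisely when $h_0$ also fixes $p_3$. Therefore $g\in\{e,h_0\}$, and from $g\cdot q=r\in C_1$ we conclude that for generic $q\in C_0$ either $q\in C_1$ or $h_0\cdot q\in C_1$.

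Finally, the standard irreducibility argument closes the proof: the loci $\{q\in C_0: q\in C_1\}$ and $\{q\in C_0: h_0\cdot q\in C_1\}$ are Zariski-closed in $C_0$ and together contain a dense open subset, so since $C_0$ is irreducible one of them is all of $C_0$. Comparing dimensions, with $C_1$ irreducible, we get either $C_0=C_1$ (take $h=e\in\E(\CC)_p$), or $h_0\cdot C_0=C_1$, i.e.\ $C_0=h_0\cdot C_1$ since $h_0^2=e$, and in this branch $h_0\in\E(\CC)_p$ by the previous paragraph; in either case $C_0=h\cdot C_1$ for some $h$ in the stabilizer of $p$. I do not anticipate a genuine obstacle, as the argument is a one-point-higher copy of Lemma~\ref{lem:notdomequiv}; the only point demanding care is verifying that the reflection $h_0$ produced in the conclusion really stabilizes the entire triple $p$ and not merely the pair $(p_1,p_2)$, which is automatic because in that branch $g=h_0$ was already forced to fix $p_3$.
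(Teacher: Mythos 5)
Your proof is correct and is exactly the argument the paper intends: its own proof of this lemma is literally ``follows similarly as in Lemma~\ref{lem:notdomequiv} by applying Propositions~\ref{prop:invsep} and~\ref{prop:stab},'' and you have carried out that one-index-higher adaptation faithfully. You even supply a detail the paper leaves implicit, namely checking that $(p_1,p_2,p_3,r)\in W_4$ before invoking the separation statement.
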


\begin{proof}
The proof follows similarly as in Lemma \ref{lem:notdomequiv} by applying Propositions \ref{prop:invsep} and \ref{prop:stab}.
\end{proof}

The previous two lemmas provide the basis for showing that the joint invariant signature $\mathcal{J}_C$ characterizes the orbit of $C$ under $\E(\CC)$. Fixing a particular $3$-tuple of points $(p_1,p_2,p_3)\in C^3$, we can look at the slice of $\mathcal{J}_C$ in which $d_4(p_1\times p_2\times p_3\times C)$ lies.

Let $\pi_{ij}: \mathcal{J}_C\rightarrow \CC$ for $1\leq i <j \leq 4$ be the projection of $\mathcal{J}_C$ onto the coordinate $d_{ij}$. Then denote $\mathcal{H}_{ij}(p_1,p_2,p_3,p_4)$ as the linear slice $\pi_{ij}^{-1}(d_{ij}(p_1,p_2,p_3,p_4))\cap \mathcal{J}_C$. Note that for $1 \leq i < j\leq 3$ we can refer to $\mathcal{H}_{ij}(p_1,p_2,p_3)$, as this does not depend on $p_4$. Finally, for any $(p_1,p_2,p_3)\in C^3$ define

$$
\mathcal{Y}_C (p_1,p_2,p_3) := \mathcal{J}_C\cap \mathcal{H}_{12}(p_1,p_2,p_3)\cap \mathcal{H}_{13}(p_1,p_2,p_3)\cap \mathcal{H}_{23}(p_1,p_2,p_3).
$$

Since $d_{ij}$ is an invariant for the action of $\E(\CC)$, all points on $C^3$ that lie in same orbit under $\E(\CC)$ induce the same slice $\mathcal{Y}_C (p_1,p_2,p_3)$. For fixed $(p_1,p_2,p_3)\in C^3$ consider the subset $U\subset C^3$ defined by
\begin{equation}\label{eq:U}
U = \{(\overline{p}_1,\overline{p}_2,\overline{p}_3)\in C^3\,|\, \exists \overline{p}_4\in C\,\, \text{where}\,\, d_4(\overline{p}_1,\overline{p}_2,\overline{p}_3,\overline{p}_4)\in \Y(p_1,p_2,p_3) \}.
\end{equation}
For any $(\overline{p}_1,\overline{p}_2,\overline{p}_3)\in U$, we have that $d_4(\overline{p}_1,\overline{p}_2,\overline{p}_3\times C)\subset \Y(p_1,p_2,p_3)$. Thus we can write

$$
 \Y_C(p_1,p_2,p_3)= \cup_{(\overline{p}_1,\overline{p}_2,\overline{p}_3)\in U} d_4(\overline{p}_1\times\overline{p}_2\times\overline{p}_3\times C).
$$

Since $\Y$ is defined by the values of $d_{12}, d_{13}, d_{23}$ on $(p_1,p_2,p_3)$, by Proposition \ref{prop:invsep}, the set $U$ consists of all points in $C^3$ lying in the same orbit of $(p_1,p_2,p_3)$ under $\E(\CC)$, i.e. $U = \E(\CC)(p_1,p_2,p_3)$.

\begin{prop}\label{prop:Yprop}
Suppose that for two plane curves $C_0, C_1$ such that $\overline{\JS_{C_0}}=\overline{\JS_{C_1}}$ there exists a point $p=(p_1,p_2,p_3,p_4)\in C_0, C_1$ such that
\begin{itemize}
\item $\dim (\Y) = 1$,

\item and $d_4(p)$ is a non-singular point of $\Y$,
\end{itemize}
where $\Y:= Y_{C_0}(p_1,p_2,p_3)=Y_{C_1}(p_1,p_2,p_3)$. Then $C_0=C_2$ where $C_2 : = h\cdot C_1$ for some $h$ in the stabilizer subgroup of $(p_1,p_2,p_3)$ under $\E(\CC)$.
\end{prop}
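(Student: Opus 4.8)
The plan is to use the two bullet hypotheses to force the closure equality $\overline{d_4(p_1\times p_2\times p_3\times C_0)}=\overline{d_4(p_1\times p_2\times p_3\times C_1)}$, and then to rerun the proof of Lemma~\ref{lem:domequiv} with this closure equality in place of the set-theoretic equality assumed there. Throughout I use that $(p_1,p_2,p_3)\in W_3$ (needed to invoke Propositions~\ref{prop:invsep} and~\ref{prop:stab}, and implicit in the hypothesis as in Lemmas~\ref{lem:notdomequiv}--\ref{lem:domequiv}). For $i=0,1$ write $D_i:=d_4(p_1\times p_2\times p_3\times C_i)$ for the image of the polynomial morphism $C_i\to\CC^6$, $q\mapsto d_4(p_1,p_2,p_3,q)$. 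Since $C_i$ is irreducible, $D_i$ is irreducible and constructible, so $\overline{D_i}$ is an irreducible variety; as in the discussion preceding the proposition $D_i\subseteq\Y_{C_i}(p_1,p_2,p_3)=\Y$, and $d_4(p)\in D_0\cap D_1$ (take the fourth point to be $p_4$, which lies on both curves). I would first record that $\dim\overline{D_i}=1$: were the morphism $q\mapsto d_4(p_1,p_2,p_3,q)$ constant on $C_i$, then $C_i$ would be contained in the intersection of three circles centered at $p_1,p_2,p_3$; since $(p_1,p_2,p_3)\in W_3$ at least two of these centers are distinct, so two of the circles meet in finitely many points, contradicting $\dim C_i=1$.

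The key step is to show $\overline{D_0}=\overline{D_1}$. Since $d_4(p)$ is a non-singular point of the $1$-dimensional variety $\overline{\Y}$, a unique irreducible component $Z$ of $\overline{\Y}$ passes through $d_4(p)$, and $\dim Z=1$. Now $\overline{D_0}$ is irreducible, is contained in $\overline{\Y}$, and contains $d_4(p)$, so it is contained in the component of $\overline{\Y}$ through $d_4(p)$, namely $Z$; as both are irreducible of dimension $1$, $\overline{D_0}=Z$. The same reasoning gives $\overline{D_1}=Z$, hence $\overline{D_0}=\overline{D_1}$.

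It remains to deduce the conclusion by repeating the proof of Lemma~\ref{lem:domequiv}. Because $\overline{D_0}=\overline{D_1}$ is irreducible, $D_0$ and $D_1$ each contain a dense open subset of it, hence so does $D_0\cap D_1$; pulling this back along the dominant morphism $q\mapsto d_4(p_1,p_2,p_3,q)$ and intersecting with the dense open subset of $C_0$ on which $(p_1,p_2,p_3,q)\in W_4$ (Lemma~\ref{lem:genpt}), we find that for all but finitely many $q\in C_0$ both $(p_1,p_2,p_3,q)\in W_4$ and $d_4(p_1,p_2,p_3,q)\in D_0\cap D_1$ hold. For such $q$ there is $r\in C_1$ with $d_4(p_1,p_2,p_3,r)=d_4(p_1,p_2,p_3,q)$, whence $(p_1,p_2,p_3,r)\in W_4$ as well, and Proposition~\ref{prop:invsep} gives $g\in\E(\CC)$ with $g\cdot(p_1,p_2,p_3,q)=(p_1,p_2,p_3,r)$; in particular $g\in\E(\CC)_{(p_1,p_2,p_3)}$. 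By Proposition~\ref{prop:stab} this stabilizer is $\{e\}$ or $\{e,h\}$ with $h$ the reflection in the line through $p_1,p_2$, so $q\in C_1$ or $q\in h\cdot C_1$. Since this holds for a dense set of $q\in C_0$ and $C_0$ is an irreducible curve, $C_0\subseteq C_1$ or $C_0\subseteq h\cdot C_1$, hence $C_0=C_1$ or $C_0=h\cdot C_1$; in both cases $C_0=C_2$ with $C_2=h'\cdot C_1$ for some $h'\in\E(\CC)_{(p_1,p_2,p_3)}$.

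I expect the key step of the second paragraph to be the main obstacle: it is where one converts the purely set-theoretic hypotheses on $\Y$ into the statement that the two a priori unrelated curves $\overline{D_0},\overline{D_1}\subseteq\overline{\Y}$ are literally the same irreducible component of $\overline{\Y}$, using that a non-singular point lies on a unique component. This is also why the dimension count of the first paragraph is needed — if either $D_i$ collapsed to the single point $d_4(p)$ one could not conclude $\overline{D_i}=Z$ — and why the hypothesis $(p_1,p_2,p_3)\in W_3$ (equivalently, $p\in W_4$) is essential both here and for the final paragraph.
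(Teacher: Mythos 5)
Your proof is correct and follows essentially the same route as the paper's: both arguments identify $d_4(p_1\times p_2\times p_3\times C_0)$ and $d_4(p_1\times p_2\times p_3\times C_1)$ as (closures of) irreducible components of $\Y$, use that the non-singular point $d_4(p)$ lies on a unique component to force them to coincide, and then invoke the argument of Lemma~\ref{lem:domequiv}. You additionally supply two details the paper leaves implicit --- the dimension count showing each image is genuinely one-dimensional, and the care with images versus their Zariski closures when rerunning the lemma --- both of which are correct and worthwhile.
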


\begin{proof}
Both sets $d_4(p_1\times p_2\times p_3\times C_0), d_4(p_1\times p_2\times p_3\times C_1)$ are irreducible components of $\Y$. Since $d_4(p)$ is a non-singular point of $\Y$, it necessarily lies in one irreducible component of $\Y$. Thus we have that
$$
d_4(p_1\times p_2\times p_3\times C_0) = d_4(p_1\times p_2\times p_3\times C_1).
$$
The result follows from Lemma \ref{lem:domequiv}.
\end{proof}

Similarly we can define a slice of the smaller variety $d_3(C^3)$,
$$
\Y_C(p_1,p_2) = d_{12}^{-1}(d_{12}(p_1,p_2)) \cap d_3(C^3),
$$
which can be similarly constructed as
$$
\Y_C(p_1,p_2) = \cup_{(\overline{p}_1,\overline{p}_2) \in d_{12}^{-1}(d_{12}(p_1,p_2))} d_3(\overline{p}_1\times\overline{p}_2\times C).
$$

\begin{prop}\label{prop:Ypropd3}
Suppose that for two plane curves $C_0, C_1$ such that $\overline{\JS_{C_0}}=\overline{\JS_{C_1}}$ there exists a point $p=(p_1,p_2,p_3)\in C_0, C_1$ such that
\begin{itemize}
\item $\dim (\Y) = 1$,

\item and $d_3(p)$ is a non-singular point of $\Y$,
\end{itemize}
where $\Y:= Y_{C_0}(p_1,p_2)=Y_{C_1}(p_1,p_2)$. Then $C_0=C_2$ where $C_2 : = h\cdot C_1$ for some $h$ in the stabilizer subgroup of $(p_1,p_2)$ under $\E(\CC)$.
\end{prop}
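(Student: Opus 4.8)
The plan is to mirror the proof of Proposition~\ref{prop:Yprop} one dimension lower: I would replace $d_4$ by $d_3$, the triple $(p_1,p_2,p_3)$ by the pair $(p_1,p_2)$, the variety $\JS_C = d_4(C^4)$ by $d_3(C^3)$, and Lemma~\ref{lem:domequiv} by Lemma~\ref{lem:notdomequiv}. As a preliminary step I would note that the hypothesis $\overline{\JS_{C_0}} = \overline{\JS_{C_1}}$ propagates to $\overline{d_3(C_0^3)} = \overline{d_3(C_1^3)}$: projecting $\JS_C \subset \CC^6$ onto the $(d_{12},d_{13},d_{23})$-coordinates recovers exactly $d_3(C^3)$, and projections are continuous in the Zariski topology, so taking closures commutes with the projection. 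This is what makes the two slices $\Y_{C_0}(p_1,p_2)$ and $\Y_{C_1}(p_1,p_2)$ comparable, consistently with the standing assumption that they coincide with a common variety $\Y$.

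Next I would observe that $d_3(p_1 \times p_2 \times C_i) = \{(d_{12}(p_1,p_2),\, d_{13}(p_1,q),\, d_{23}(p_2,q)) : q \in C_i\}$ is irreducible, being the image of the irreducible curve $C_i$ under a morphism, and that it is contained in $\Y$, since each of its points lies on the slice $d_{12} = d_{12}(p_1,p_2)$ and inside $d_3(C_i^3)$. Granting (see below) that this image is genuinely one-dimensional, the assumption $\dim (\Y) = 1$ forces $d_3(p_1 \times p_2 \times C_0)$ and $d_3(p_1 \times p_2 \times C_1)$ to each be an irreducible component of $\Y$. Both components contain $d_3(p) = d_3(p_1,p_2,p_3)$ --- the first because $p_3 \in C_0$, the second because $p_3 \in C_1$ --- and since $d_3(p)$ is a non-singular point of $\Y$ it lies on a unique irreducible component. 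Hence $d_3(p_1 \times p_2 \times C_0) = d_3(p_1 \times p_2 \times C_1)$, and since $(p_1,p_2) \in W_2$ (which the non-degeneracy hypotheses force, paralleling the implicit $p \in W_3$ in Proposition~\ref{prop:Yprop}), Lemma~\ref{lem:notdomequiv} yields $C_0 = h \cdot C_1$ for some $h$ in the stabilizer of $(p_1,p_2)$ under $\E(\CC)$, which is the claim.

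The step I expect to require the most care is confirming that $d_3(p_1 \times p_2 \times C_i)$ is one-dimensional, i.e.\ that $q \mapsto (d_{13}(p_1,q),\, d_{23}(p_2,q))$ is non-constant on $C_i$. If $d_{13}(p_1,\cdot)$ were constant on $C_i$ with some nonzero value, then $C_i$ --- being irreducible of degree $> 1$ and contained in the corresponding irreducible conic --- would equal that conic, a ``circle'' centered at $p_1$; a constant value of $0$ is impossible since the level set is then a union of two lines. Applying the same reasoning to $d_{23}(p_2,\cdot)$, if both were constant then $C_i$ would simultaneously be a circle centered at $p_1$ and a circle centered at $p_2$, impossible since $p_1 \neq p_2$ (which holds because $d_{12}(p_1,p_2) \neq 0$). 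Hence at least one of the two distance functions is non-constant on $C_i$, as needed. Settling this dichotomy, together with confirming that the hypotheses indeed place $(p_1,p_2)$ in $W_2$ so that Lemma~\ref{lem:notdomequiv} applies, is where the genuine content beyond Proposition~\ref{prop:Yprop} lies; the remainder is a direct transcription.
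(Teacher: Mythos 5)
Your proposal matches the paper's argument: the paper proves this proposition by saying it ``follows similarly as the proof of Proposition~\ref{prop:Yprop}, using Lemma~\ref{lem:notdomequiv},'' i.e.\ the two images $d_3(p_1\times p_2\times C_i)$ are irreducible components of the one-dimensional $\Y$, the non-singularity of $d_3(p)$ forces them to coincide, and Lemma~\ref{lem:notdomequiv} finishes. Your additional checks --- that $d_3(p_1\times p_2\times C_i)$ really is one-dimensional (the two-circles argument) and that $(p_1,p_2)\in W_2$ so the lemma applies --- are details the paper leaves implicit, and you resolve them correctly.
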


\begin{proof}
The proof follows similarly as the proof of Proposition \ref{prop:Yprop}, using Lemma \ref{lem:notdomequiv}.
\end{proof}

We consider algebraic curves under two cases: either the image of $C^3$ under the map $d_3: C^3 \rightarrow \CC^3$ is Zariski-dense in $\CC^3$ or not. In either case, we can fix a generic point on $C^4$ and find a slice through this point which satisfies the hypotheses of either Proposition \ref{prop:Yprop} or \ref{prop:Ypropd3}, and hence characterizes the curve.

\begin{lemma}\label{lem:genericP}
Suppose that for a plane curve $C$, the image of the map $d_3 : C^3 \rightarrow \CC^3$ is Zariski-dense in $\CC^3$. Then a generic point $p=(p_1,p_2,p_3,p_4) \in C^4$ satisfies the hypothesis of Proposition \ref{prop:Yprop}.
\end{lemma}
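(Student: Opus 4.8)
The plan is to pass to the projection $\pi_{123}\colon\overline{\JS_C}\to\CC^3$ onto the three coordinates $d_{12},d_{13},d_{23}$ and to read off the dimension and components of $\Y_C(p_1,p_2,p_3)$ from the fibers of this map. Since $\pi_{123}(\JS_C)=d_3(C^3)$ is dense in $\CC^3$ by hypothesis, $\pi_{123}$ is dominant; moreover $\overline{\JS_C}=\overline{d_4(C^4)}$ is irreducible of dimension at most $\dim C^4=4$. By the theorem on the dimension of fibers, there is a dense open $V\subseteq\CC^3$ over which every fiber of $\pi_{123}$ has pure dimension $\dim\overline{\JS_C}-3$, which is at most $1$. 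As $d_3$ is dominant, a generic triple $(p_1,p_2,p_3)\in C^3$ has $v:=d_3(p_1,p_2,p_3)\in V$; I would fix such a triple. Since every point of $\Y_C(p_1,p_2,p_3)=\JS_C\cap\HH_{12}\cap\HH_{13}\cap\HH_{23}$ has $\pi_{123}$-image equal to $v$, we get $\overline{\Y_C(p_1,p_2,p_3)}\subseteq\pi_{123}^{-1}(v)\cap\overline{\JS_C}$, hence $\dim\overline{\Y_C(p_1,p_2,p_3)}\le 1$.

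For the opposite inequality, consider $Z_0:=d_4(p_1\times p_2\times p_3\times C)\subseteq\Y_C(p_1,p_2,p_3)$. If $Z_0$ were a single point, then $d_{14}(p_1,p_4)$ would be constant as $p_4$ ranges over $C$; evaluating at $p_4=p_1\in C$ shows this constant is $0$, so $C$ would lie in the union of isotropic lines $\{(x,y):(x_1-x)^2+(y_1-y)^2=0\}$ through $p_1$, contradicting irreducibility and $\deg C>1$. Hence $\overline{Z_0}$ is an irreducible curve, so $\dim\overline{\Y_C(p_1,p_2,p_3)}=1$, and $\overline{Z_0}$—being a $1$-dimensional irreducible subvariety of the $1$-dimensional variety $\overline{\Y_C(p_1,p_2,p_3)}$—is one of its (finitely many) irreducible components. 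This already gives the first hypothesis of Proposition~\ref{prop:Yprop} for the fixed generic $(p_1,p_2,p_3)$ and any $p_4\in C$.

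It then remains to choose $p_4$ so that $d_4(p)$ is a nonsingular point of $\overline{\Y_C(p_1,p_2,p_3)}$. Let $B\subseteq\overline{Z_0}$ be the union of the (finitely many) singular points of $\overline{Z_0}$ with the (finitely many) points where $\overline{Z_0}$ meets some other, distinct irreducible component of $\overline{\Y_C(p_1,p_2,p_3)}$; then $B$ is a proper closed subset of $\overline{Z_0}$. The map $\phi\colon C\to\overline{Z_0}$, $p_4\mapsto d_4(p_1,p_2,p_3,p_4)$, is dominant, so $\phi^{-1}(B)$ is a proper, hence finite, closed subset of $C$. For $p_4\in C\setminus\phi^{-1}(B)$, the point $d_4(p)=\phi(p_4)$ lies on the single component $\overline{Z_0}$ of $\overline{\Y_C(p_1,p_2,p_3)}$ and is a smooth point of it, therefore a nonsingular point of $\overline{\Y_C(p_1,p_2,p_3)}$. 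Since the discarded triples $(p_1,p_2,p_3)$ form a proper closed subset of $C^3$ and, over each retained triple, the discarded $p_4$ form a proper closed subset of $C$, the excluded locus is contained in a proper closed subset of $C^4$, so a generic $p\in C^4$ satisfies both hypotheses of Proposition~\ref{prop:Yprop}.

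The only delicate points are bookkeeping rather than substance: $\Y_C(p_1,p_2,p_3)$ is a priori only constructible, so ``nonsingular point of $\Y$'' must be read in its Zariski closure (consistently with how it is used in the proof of Proposition~\ref{prop:Yprop}), and one should invoke the standard fact that a condition holding for a generic $p_4$ over each generic $(p_1,p_2,p_3)$ holds for a generic $p\in C^4$. The real input is just the theorem on the dimension of fibers for $\pi_{123}$ together with the trivial bound $\dim\overline{\JS_C}\le 4$—this is precisely where the hypothesis that $d_3$ is dominant does the work, and it is the step I expect to be the crux.
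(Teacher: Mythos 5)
Your proof is correct, and its overall skeleton matches the paper's: bound $\dim \Y$ above by $1$ using the dominance of $d_3$, show the component $d_4(p_1\times p_2\times p_3\times C)$ is genuinely one-dimensional, and then place $d_4(p)$ at a generic (hence nonsingular) point of that component. The differences are in the middle step and in packaging. For the upper bound you apply the fiber-dimension theorem to the projection $\pi_{123}\colon \overline{\JS_C}\to\CC^3$, whereas the paper applies it to $d_3\colon C^3\to\CC^3$ to conclude that the set $U=d_3^{-1}(d_3(p_1,p_2,p_3))$ is finite and then writes $\Y$ as the finite union $\bigcup_U d_4(\overline{p}_1\times\overline{p}_2\times\overline{p}_3\times C)$ of curves; these are two faces of the same theorem. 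The genuine divergence is in ruling out $\dim\bigl(d_4(p_1\times p_2\times p_3\times C)\bigr)=0$: the paper argues via Proposition~\ref{prop:invsep} and the triviality of the stabilizer of a generic triple, i.e.\ through the group action, while you avoid invariant theory entirely by evaluating the putatively constant function $d_{14}$ at $p_4=p_1$ to force it to vanish identically, placing $C$ inside a union of two isotropic lines and contradicting irreducibility and $\deg C>1$. Your version is shorter and more self-contained (and sidesteps a spot where the paper's wording about membership in $W_3$, $W_4$ is garbled); the paper's version has the virtue of reusing machinery (Propositions~\ref{prop:invsep} and~\ref{prop:stab}) that is needed elsewhere and would generalize to other groups where no such explicit degeneracy argument is available. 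Your explicit treatment of the nonsingularity step and of the ``generic over generic is generic'' bookkeeping is more careful than the paper's one-line dismissal, and your remark that $\Y$ is only constructible, so smoothness must be read in its closure, is a fair point that the paper glosses over.
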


\begin{proof}
Fix a generic $(p_1,p_2,p_3,p_4)\in C^4,$ and consider the subset $U\subset C^3$ defined in \eqref{eq:U}. In particular $U= d_3^{-1}(d_3(p_1,p_2,p_3))$, and hence $U$ has finite cardinality since $d_3(C^3)$ is Zariski-dense in $\CC^3$ \cite[Ch 1, Sec 6.3, Thm 1.25]{Shaf94}. Therefore $\dim(\Y)\leq 1$.

Suppose that $\dim(d_4(p_1\times p_2\times p_3 \times C)) = 0$, then for any $x\in C$, $d_{14}(x) = d_{14}(p_1,p_2,p_3,x)$ is constant on $C$, and similarly so are $d_{24}(x)$ and $d_{34}(x)$. By Lemma \ref{lem:genpt}, $(p_1,p_2,p_3)\notin W_3$ and for a generic $x\in C$, $(p_1,p_2,p_3,x)\notin W_4$. Since $d_{12}(x), d_{24}(x), d_{34}(x)$ are all constant, by Proposition \ref{prop:invsep}, the $4$-tuples $(p_1,p_2,p_3,x)$ are all related by an element of $\E(\CC)$. However each such element lies in the stabilizer subgroup of $(p_1,p_2,p_3)$ which is trivial by Proposition \ref{prop:stab}. This a contradiction as $\dim(C) = 1$, and hence $\dim(d_4(p_1\times p_2\times p_3\times C) =1$ implying that $\dim(\Y)=1.$

Finally we note that $p_4$ is a generic point of $C$, and hence $d_4(p_1,p_2,p_3,p_4)$ is a generic point of the irreducible component $d_4(p_1\times p_2\times p_3\times C)$ of $\Y$. Thus $d_4(p_1,p_2,p_3,p_4)$ is a non-singular point of $\Y$.
\end{proof}

\begin{lemma}\label{lem:genericPd3}
Suppose that for a plane curve $C$, the image of the map $d_3 : C^3 \rightarrow \CC^3$ is \textbf{not} Zariski-dense in $\CC^3$. Then a generic point $p=(p_1,p_2,p_3) \in C^3$ satisfies the hypothesis of Proposition \ref{prop:Ypropd3}.
\end{lemma}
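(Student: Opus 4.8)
The plan is to reduce everything to dimension theory for the image surface $Z := \overline{d_3(C^3)} \subseteq \CC^3$, which satisfies $\dim Z \le 2$ by the non-density hypothesis, and first to pin down that $\dim Z = 2$ exactly. Fix generic $p_1,p_2 \in C$ and consider the curve $D := \overline{d_3(p_1\times p_2\times C)} \subseteq Z$. Arguing exactly as in the proof of Lemma~\ref{lem:genpt}, the function $p_3 \mapsto d_{13}(p_1,p_3)$ is non-constant on $C$ (otherwise $C$ would be a circle centered at $p_1$, which a generic $p_1$ is not), so $D$ is genuinely one-dimensional; for the same reason $d_{12}$ is non-constant on $C^2$, so as $(p_1,p_2)$ varies the curves $D$ lie in infinitely many distinct hyperplanes $\{d_{12}=\text{const}\}$ and are therefore pairwise distinct. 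Since a variety of dimension at most one contains only finitely many irreducible curves, $Z$ has dimension $\ge 2$, hence $\dim Z = 2$.

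Next, given a generic $p=(p_1,p_2,p_3)\in C^3$, set $v = d_{12}(p_1,p_2)$, a generic value of the non-constant morphism $d_{12}|_{C^2}$, and $\Y := Y_C(p_1,p_2) = \{d_{12}=v\}\cap d_3(C^3)$. Since $d_3(C^3)$ is constructible and Zariski-dense in the irreducible surface $Z$, for generic $v$ one has $\overline{\Y} = \{d_{12}=v\}\cap Z$, and this is a generic fibre of the dominant morphism $d_{12}\colon Z \to \CC$, hence purely one-dimensional (as $\CC[Z]$ is torsion-free, thus flat, over $\CC[d_{12}]$); this gives the first hypothesis of Proposition~\ref{prop:Ypropd3}. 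Moreover $D = \overline{d_3(p_1\times p_2\times C)}\subseteq\overline{\Y}$ is an irreducible curve inside a purely one-dimensional variety, so it is one of the finitely many irreducible components of $\overline{\Y}$, and since $p_3$ is generic in $C$, the point $d_3(p)$ is a generic point of $D$.

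It then remains to show $d_3(p)$ is a non-singular point of $\overline{\Y}$; it suffices that the generic point of the component $D$ lies neither on the other components of $\overline{\Y}$ (automatic, since $D$ is a component) nor in $\mathrm{Sing}(\overline{\Y})$. For the latter, the key input is generic smoothness in characteristic zero: applied to $d_{12}\colon Z^{\mathrm{sm}} \to \CC$ it shows $\{d_{12}=v\}\cap Z^{\mathrm{sm}}$ is smooth for generic $v$, while the contribution of $Z^{\mathrm{sing}}$ (which has dimension $\le 1$) to the fibre over a generic $v$ is finite and so cannot contain the one-dimensional $D$; hence a generic point of $D$ lies in $Z^{\mathrm{sm}}$, where $\overline{\Y}$ coincides with the smooth fibre, and is therefore a smooth point of $\overline{\Y}$. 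Choosing $p$ generic enough to meet all of these finitely many genericity requirements then establishes the hypotheses of Proposition~\ref{prop:Ypropd3}.

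I expect the delicate step to be this last one — ruling out that the generic point of $D$ is a singular point of $\overline{\Y}$, e.g.\ if $Z$ were singular along $D$ or the fibre were non-reduced there. The generic-smoothness argument above handles it; an alternative is to invoke the classification that the only irreducible plane curves of degree $>1$ with $d_3(C^3)$ not Zariski-dense are circles and to verify the claim directly on the explicit rational parametrization of $Z$ in that case, but the dimension-theoretic route avoids needing that classification.
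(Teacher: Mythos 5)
Your proof is correct and shares the paper's overall skeleton: both arguments reduce to showing $\dim \Y_C(p_1,p_2)=1$ and that $d_3(p)$ is a generic point of the irreducible component $D=\overline{d_3(p_1\times p_2\times C)}$, hence a non-singular point of $\Y$. The sub-arguments differ in three places. (i) For the upper bound $\dim\Y\le 1$, the paper uses only that $\overline{d_3(C^3)}$ is irreducible of dimension at most $2$: if $\dim\Y=2$ then $\Y$ is dense in the whole image, forcing $d_{12}$ to be constant on $C^2$. You instead pin down $\dim\overline{d_3(C^3)}=2$ exactly (via infinitely many distinct curves $D$ in distinct level sets of $d_{12}$) and then invoke flatness over $\CC[d_{12}]$ to get that the generic fibre is purely one-dimensional; this is heavier machinery, but it buys purity of dimension, a point the paper leaves implicit when it treats $D$ as a component. (ii) For the lower bound $\dim D\ge 1$, the paper argues by contradiction through Proposition~\ref{prop:invsep} and the finiteness of the stabilizer of $(p_1,p_2)$ (Proposition~\ref{prop:stab}), whereas you observe directly that $d_{13}(p_1,\cdot)$ is non-constant on $C$; your route is more elementary, though the parenthetical justification is slightly off --- the reason constancy fails is not that a generic $p_1$ avoids being a center, but that $p_1$ itself lies on $C$, so the constant would have to be $0$ and $C$ would be contained in a pair of isotropic lines, exactly as in Lemma~\ref{lem:genpt}. (iii) For non-singularity, the paper simply uses that a generic point of an irreducible component of a reduced variety is a non-singular point of that variety; your generic-smoothness argument plus the dimension count on $Z^{\mathrm{sing}}$ is valid but more than is needed. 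The genericity bookkeeping (the good locus of $p_3$ depending on $(p_1,p_2)$) is glossed over to about the same extent in both arguments and is handled by standard constructibility considerations.
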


\begin{proof}
Fix a generic $p\in C^4$. We first note that $\dim(d_3(C)) \leq 2$ since $d_3(C)$ is not Zariski-dense in $\CC^3$. Thus if $\dim(\Y_C(p_1,p_2)) =2$, since $d_3(C)$ is irreducible $\overline{d_3(C)} = \overline{\Y_C(p_1,p_2)},$ implying that $d_{12}$ is constant on $C^2$. But for an irreducible curve of degree $>1$, this cannot occur, following a similar argument as in the proof in Lemma \ref{lem:genpt}. Therefore $\dim(\Y_C(p_1,p_2))= 0$ or 1.

Suppose that $\dim(d_4(p_1\times p_2\times C)) = 0$, then for any $x\in C$, $d_{13}(x) = d_{13}(p_1,p_2,p_3,x)$ is constant on $C$, and similarly so is $d_{23}(x)$. Since $d_{13}(x), d_{23}(x)$ are constant, by Proposition \ref{prop:invsep}, the $3$-tuples $(p_1,p_2,x)$ are all related by an element of $\E(\CC)$. However each such element lies in the stabilizer subgroup of $(p_1,p_2,p_3)$ which is of cardinality 2 by Proposition \ref{prop:stab}. This a contradiction as $\dim(C) = 1$, and hence $\dim(d_4(p_1\times p_2\times C) =1$ implying that $\dim(\Y)=1.$

As in the previous proof, $d_3(p_1,p_2,p_3)$ is a generic point of the irreducible component $d_3(p_1\times p_2\times C)$ of $\Y_C(p_1,p_2)$, and hence $d_3(p_1,p_2,p_3)$ is a non-singular point of $\Y_C(p_1,p_2)$.
\end{proof}

\begin{thm}\label{prop:jointsig}
Two plane curves $C_0, C_1\subset \CC^2$ are $\E(\CC)$-equivalent if and only if $\mathcal{J}_{C_0} = \mathcal{J}_{C_1}$.
\end{thm}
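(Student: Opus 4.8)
The plan is to prove the two implications of Theorem~\ref{prop:jointsig} separately. The forward direction is immediate: if $C_0\cong_{\E(\CC)}C_1$ then $C_0=\overline{g\cdot C_1}$ for some $g\in\E(\CC)$, and since every element of $\E(\CC)$ acts on $\CC^2$ by an affine linear bijection (its matrix has nonzero determinant and last row $(0,0,1)$) the closure is superfluous, so $C_0=g\cdot C_1$ and $C_0^4=g\cdot C_1^4$ under the diagonal action; as each squared distance $d_{jk}$ is $\E(\CC)$-invariant, $d_4$ is constant along diagonal orbits, whence $\JS_{C_0}=d_4(C_0^4)=d_4(C_1^4)=\JS_{C_1}$.

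For the converse, assume $\JS_{C_0}=\JS_{C_1}$. First I would observe that projecting $\JS_C\subset\CC^6$ onto the coordinates $(d_{12},d_{13},d_{23})$ recovers exactly $d_3(C^3)$ (given $(p_1,p_2,p_3)\in C^3$ one may append an arbitrary $p_4\in C$), so $d_3(C_0^3)=d_3(C_1^3)$; in particular the dichotomy of whether $d_3(C^3)$ is Zariski-dense in $\CC^3$ is common to $C_0$ and $C_1$. I would then run the same argument in each of the two cases of Lemmas~\ref{lem:genericP} and~\ref{lem:genericPd3}: in the dense case using $d_4$, $W_4$, the slices $\Y_C(p_1,p_2,p_3)$, Lemma~\ref{lem:genericP} and Proposition~\ref{prop:Yprop}; in the non-dense case using $d_3$, $W_3$, the slices $\Y_C(p_1,p_2)$, Lemma~\ref{lem:genericPd3} and Proposition~\ref{prop:Ypropd3}.

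The key step, described in the dense case (the other being identical): since $C_0^4$ is irreducible (a product of the irreducible curve $C_0$), choose $p=(p_1,p_2,p_3,p_4)$ in the intersection of the dense open subset $W_4\cap C_0^4$ of $C_0^4$ with the dense open locus where the conclusion of Lemma~\ref{lem:genericP} holds, so that $p$ verifies the hypotheses of Proposition~\ref{prop:Yprop} relative to $C_0$. Since $d_4(p)\in\JS_{C_0}=\JS_{C_1}$, pick $q=(q_1,q_2,q_3,q_4)\in C_1^4$ with $d_4(q)=d_4(p)$; all coordinates of $d_4(p)$ being nonzero forces $q\in W_4$, so Proposition~\ref{prop:invsep} yields $g\in\E(\CC)$ with $g\cdot q=p$. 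Setting $C_1':=g\cdot C_1$, invariance of the $d_{jk}$ gives $\JS_{C_1'}=\JS_{C_1}=\JS_{C_0}$ while now $p\in C_0^4\cap(C_1')^4$; and since $\Y_C(p_1,p_2,p_3)$ depends only on $\JS_C$ and on the values $d_{ij}(p_1,p_2,p_3)$, which agree for $C_0$ and $C_1'$, we get $\Y_{C_0}(p_1,p_2,p_3)=\Y_{C_1'}(p_1,p_2,p_3)$. Thus every hypothesis of Proposition~\ref{prop:Yprop} holds for the pair $(C_0,C_1')$, and it gives $C_0=h\cdot C_1'$ for some $h\in\E(\CC)$; hence $C_0=(hg)\cdot C_1$ with $hg\in\E(\CC)$, and since $hg$ is an affine bijection $C_0=\overline{(hg)\cdot C_1}$, i.e. $C_0\cong_{\E(\CC)}C_1$. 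The non-dense case is verbatim after replacing $d_4$, $W_4$, $\Y_C(p_1,p_2,p_3)$, Lemma~\ref{lem:genericP} and Proposition~\ref{prop:Yprop} by $d_3$, $W_3$, $\Y_C(p_1,p_2)$, Lemma~\ref{lem:genericPd3} and Proposition~\ref{prop:Ypropd3}.

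The heavy lifting is already done in Propositions~\ref{prop:invsep}, \ref{prop:Yprop}, \ref{prop:Ypropd3} and Lemmas~\ref{lem:genpt}--\ref{lem:domequiv}; the genuinely new point is the transport step above, which replaces $C_1$ by a group translate $C_1'$ literally carrying a generic non-isotropic configuration of $C_0$ and so converts the hypothesis $\JS_{C_0}=\JS_{C_1}$ into the ``shared configuration in $W_3$'' hypothesis that Propositions~\ref{prop:Yprop}/\ref{prop:Ypropd3} require. The only fiddly point is choosing a single generic $p$ that lies in $W_n$ and simultaneously meets the genericity of Lemma~\ref{lem:genericP} or~\ref{lem:genericPd3}, which is immediate from irreducibility of $C^n$ since each holds on a dense open subset.
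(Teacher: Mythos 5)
Your proposal is correct and follows essentially the same route as the paper's proof: pick a generic configuration $p$ on $C_0$, transport a matching configuration $q$ on $C_1$ onto it via Proposition~\ref{prop:invsep}, and then invoke Lemma~\ref{lem:genericP}/Proposition~\ref{prop:Yprop} (or their $d_3$ analogues) on the translated curve. You are in fact somewhat more careful than the paper, spelling out the trivial forward direction, the fact that the density dichotomy for $d_3(C^3)$ is shared between the two curves, and the verification that $q\in W_4$ and that the slices $\Y$ agree.
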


\begin{proof}
Note that a generic point of $\JS := \mathcal{J}_{C_0} = \mathcal{J}_{C_1}$ is the image of generic point of $C_0^4$. Fix such a point $p\in C_0^4$. Let $q\in C_1^4$ be a point of $C_1^4$ in the inverse image $d_4^{-1}(d_4(p))$. By Proposition Proposition \ref{prop:invsep} there exists $g\in \E(\CC)$ such that $p = g\cdot q$. Consider the curve $C_2 = g\cdot C_1$.

Suppose $d_3(C_0^3)$ is Zariski-dense in $\CC^3$. Since $p$ is a generic point of $C_0$, by Lemma \ref{lem:genericP} and Proposition \ref{prop:Yprop}, $C_0 = C_2$. In particular $C_0$ is $\E(\CC)$-equivalent to $C_1$. Similarly by Lemma \ref{lem:genericPd3} and Proposition \ref{prop:Ypropd3} if $d_3(C_0^3)$ is not Zariski-dense in $\CC^3$, $C_0$ and $C_1$ are $\E(\CC)$-equivalent.
\end{proof}

The distinction between the cases of when the image of $C^3$ under $d_3$ is Zariski-dense in $\CC^3$ is not arbitrary. We show that this distinction determines whether a curve has an infinite symmetry group under $\E(\CC)$. As in the case of the differential signature, the dimension of the joint signature $\JS_C$ and the cardinality of a fiber of $d_4$ characterize the symmetry group of $C$.

\begin{lemma}\label{lem:noncollinear}
For a plane curve $C$, a generic $3$-tuple $p\in C^3$ has trivial stabilizer subgroup under $\E(\CC)$.
\end{lemma}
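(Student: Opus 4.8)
The plan is to reduce the statement, via the explicit stabilizer computation underlying Proposition~\ref{prop:stab}, to the elementary fact that a generic triple of points on an irreducible curve of degree greater than one is not collinear.

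First I would recall the precise shape of the stabilizer. Fix $p=(p_1,p_2,p_3)\in W_3\cap C^3$; by Lemma~\ref{lem:genpt} this is a nonempty Zariski-open condition on the irreducible variety $C^3$, hence it holds generically. Since $d_{12}(p)\neq 0$, the points $p_1,p_2$ span a well-defined non-isotropic line $\ell$, and the normalization~\eqref{eq:movingframe} conjugates $\E(\CC)_p$ into the stabilizer of a tuple of the form $(0,0,0,y_2,\dots)$ with $y_2\neq 0$. As in the proof of Proposition~\ref{prop:stab}, that stabilizer is either trivial or $\{e,h_0\}$ with $h_0$ the reflection $(x,y)\mapsto(-x,y)$; translating back, $\E(\CC)_p\subseteq\{e,h\}$ where $h$ is the Euclidean reflection fixing $\ell$ pointwise. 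Since $h$ fixes $p_1$ and $p_2$, one has $h\cdot p=(p_1,p_2,h\cdot p_3)$, so $h\in\E(\CC)_p$ if and only if $p_3\in\ell$, i.e. if and only if $p_1,p_2,p_3$ are collinear.

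Next I would show non-collinearity is generic: let $Z\subseteq C^3$ be the closed locus of triples for which the matrix with rows $(1,x_i,y_i)$ is singular. Since $C$ is irreducible of degree greater than one it is contained in no line, so choosing two distinct points of $C$ together with a third point of $C$ off the line they span exhibits a point of $C^3\setminus Z$; thus $Z$ is a proper closed subset and $(W_3\cap C^3)\setminus Z$ is a nonempty Zariski-open subset of $C^3$. Every $p$ in this set has $p_3\notin\ell$ and hence, by the previous paragraph, $\E(\CC)_p=\{e\}$, which proves the claim. There is no serious obstacle here; the only point to watch is that $h$ must be a genuine (non-degenerate) Euclidean reflection, which is exactly why one works inside $W_3\subseteq W_2$, where $d_{12}\neq 0$ excludes coincident and isotropic pairs $p_1,p_2$.
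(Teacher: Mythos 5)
Your proposal is correct and follows essentially the same route as the paper: reduce via the stabilizer computation of Proposition~\ref{prop:stab} to the single candidate reflection about the line through $p_1,p_2$, observe that it stabilizes the triple exactly when $p_3$ lies on that line, and then rule out generic collinearity using that an irreducible curve of degree greater than one is contained in no line. Your version is slightly more explicit about restricting to $W_3$ and about the collinearity locus being a proper closed subset, but the argument is the same.
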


\begin{proof}
From the proof of Proposition \ref{prop:stab}, we know that the stabilizer of $p=(p_1,p_2,p_3)$ under $\E(\CC)$ is at most the identity transformation and a reflection about the one-dimensional linear subset of $\CC^2$ defined by $p_1,p_2$. Thus if $p_1,p_2,$ and $p_3$ are non-collinear, the stabilizer subgroup of $p$ is trivial.

Suppose a generic triple on $C^3$ is collinear. Then for a generic tuple $(p_1,p_2)\in C^2$, a generic point of $C$ lies on the linear subset defined by $(x_1-x_2)(y-y_1)+(y_1-y_2)(x-x_1)=0$. This is a contradiction since $C$ is an irreducible plane curve of degree $>1$.
\end{proof}

\begin{proposition}
\label{prop:jointExceptionals}
For an algebraic curve $C$ the following are equivalent.

\begin{itemize}
\item[(1)] The image $d_3(C^3)$ is Zariski-dense in $\CC^3$.

\item[(2)] The symmetry group $|\Sym(C,\E(\CC))| = n < \infty$.

\item[(3)] The map $d_4$ is generically $n:1$.

\item[(4)] $\dim(\JS_C)=4.$
\end{itemize}
\end{proposition}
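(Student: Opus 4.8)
I will prove the cycle of implications $(1)\Rightarrow(2)\Rightarrow(1)$ and $(1)\Rightarrow(3)\Rightarrow(4)\Rightarrow(1)$, arranged so that the integer $n$ in $(2)$ and $(3)$ is the same. The heart of $(1)\Leftrightarrow(2)$ will be a dimension count on the incidence variety
$$
\widetilde{Z}=\{(g,p_1,p_2,p_3)\in\E(\CC)\times C^3 \mid g\cdot p_i\in C\ \text{for}\ i=1,2,3\}.
$$
For a generic triple, the fibre of the projection $\widetilde Z\to C^3$ over $(p_1,p_2,p_3)$ is $\{g\in\E(\CC)\mid g\cdot p_i\in C\}$, and $g\mapsto g\cdot(p_1,p_2,p_3)$ identifies it bijectively with $d_3^{-1}(d_3(p_1,p_2,p_3))\cap C^3$: surjectivity is Proposition~\ref{prop:invsep} applied on $W_3$ (a generic triple lies in $W_3$ by Lemma~\ref{lem:genpt}, and then so does every triple with the same invariants), and injectivity is triviality of $\E(\CC)_{(p_1,p_2,p_3)}$ (Lemma~\ref{lem:noncollinear}). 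Since $\dim C^3=\dim\CC^3=3$, statement $(1)$ --- that $d_3$ is dominant --- is equivalent to these fibres being finite, hence to $\dim\widetilde Z=3$. On the other hand, the fibre of $\widetilde Z\to\E(\CC)$ over $g$ is $(C\cap g^{-1}C)^3$, which has dimension $3$ when $g\in\Sym(C,\E(\CC))$ and is finite when $g\notin\Sym(C,\E(\CC))$ (then $g^{-1}C\ne C$, and two distinct irreducible plane curves meet in finitely many points); hence $\dim\widetilde Z=\max\{\dim\Sym(C,\E(\CC))+3,\ \le 3\}$, which equals $3$ precisely when $\Sym(C,\E(\CC))$ is finite. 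This proves $(1)\Leftrightarrow(2)$; set $n=|\Sym(C,\E(\CC))|$ in this case.

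For $(1)\Rightarrow(3)$, assume $(1)$, so $|\Sym(C,\E(\CC))|=n<\infty$ by the previous step. Fix a generic $p=(p_1,p_2,p_3,p_4)\in C^4$ and let $F=d_4^{-1}(d_4(p))\cap C^4$. The key point will be that $q\mapsto g_q$, where $g_q\in\E(\CC)$ is the unique element with $q=g_q\cdot p$, is a bijection $F\to\Sym(C,\E(\CC))$: existence of $g_q$ is Proposition~\ref{prop:invsep} (both $p$ and $q$ lie in $W_4$ generically), uniqueness is triviality of $\E(\CC)_p\subseteq\E(\CC)_{(p_1,p_2,p_3)}$ (Lemma~\ref{lem:noncollinear}), and conversely every $g\in\Sym(C,\E(\CC))$ equals $g_q$ for $q=g\cdot p\in F$. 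To see $g_q\in\Sym(C,\E(\CC))$, I would apply Proposition~\ref{prop:Yprop} with $C_0=C$, $C_1=g_q\cdot C$, and base point $q$: one has $\overline{\JS_{C_1}}=\overline{\JS_C}$ since $d_4$ is $\E(\CC)$-invariant, $q\in C^4\cap(g_q\cdot C)^4$ since $g_q^{-1}q_i=p_i\in C$, and the slice $\Y=\Y_C(q_1,q_2,q_3)=\Y_{g_q\cdot C}(q_1,q_2,q_3)$ satisfies $\dim\Y=1$ with $d_4(q)=d_4(p)$ a smooth point of $\Y$ --- because $\Y_C$ depends only on the $\E(\CC)$-orbit of $(q_1,q_2,q_3)=g_q\cdot(p_1,p_2,p_3)$ and Lemma~\ref{lem:genericP} applies to the generic point $p$. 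Proposition~\ref{prop:Yprop} then yields $C=h\cdot g_q\cdot C$ with $h\in\E(\CC)_{(q_1,q_2,q_3)}$, and this stabiliser is trivial by Lemma~\ref{lem:noncollinear} (the triple $(q_1,q_2,q_3)$ is an $\E(\CC)$-translate of a generic one), so $h=e$ and $g_q\in\Sym(C,\E(\CC))$. Therefore $|F|=n$, and since $F$ is the fibre of $d_4\colon C^4\to\JS_C$ over a generic point, $d_4$ is generically $n:1$.

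The remaining implications are short. For $(3)\Rightarrow(4)$: if $d_4$ is generically $n:1$ with $n<\infty$, its generic fibre is finite, so $\dim\overline{\JS_C}=\dim C^4=4$ by the fibre-dimension theorem. For $(4)\Rightarrow(1)$, I argue contrapositively: if $(1)$ fails then $\Sym(C,\E(\CC))$ is positive-dimensional by the first paragraph, so for a generic $p\in C^4$ the orbit $\Sym(C,\E(\CC))\cdot p$ is a positive-dimensional subset of $d_4^{-1}(d_4(p))\cap C^4$ --- it has dimension $\dim\Sym(C,\E(\CC))$ since $\Sym(C,\E(\CC))\cap\E(\CC)_p$ is trivial (Lemma~\ref{lem:noncollinear}), and it lies in the fibre because $d_4$ is $\E(\CC)$-invariant and $g\cdot p_i\in C$ for $g\in\Sym(C,\E(\CC))$; hence $d_4$ has positive-dimensional generic fibres and $\dim\overline{\JS_C}\le 3$, so $(4)$ fails. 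I expect the main obstacle to be the verification $g_q\in\Sym(C,\E(\CC))$ in $(1)\Rightarrow(3)$: it demands checking that every hypothesis of Proposition~\ref{prop:Yprop} persists for the auxiliary curve $g_q\cdot C$ --- in particular that the slice $\Y$ and its distinguished smooth point are unchanged under replacing a point configuration by an $\E(\CC)$-translate, and that the relevant point-stabilisers are trivial at a generic configuration. The incidence-variety count for $(1)\Leftrightarrow(2)$ is also somewhat delicate, as it relies on the genericity reduction to $W_3$ and on the elementary fact that two distinct irreducible plane curves intersect in a finite set.
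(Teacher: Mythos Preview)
Your argument is correct and covers all the needed implications, but it takes a genuinely different route from the paper for the equivalence $(1)\Leftrightarrow(2)$. The paper never introduces the incidence variety $\widetilde Z$; instead it first establishes $(3)\Leftrightarrow(4)$ and $(1)\Leftrightarrow(4)$ directly via fibre-dimension considerations on $d_3$ and $d_4$, then proves $(3)\Rightarrow(2)$ essentially as you do (via Proposition~\ref{prop:Yprop} and Lemma~\ref{lem:genericP}), and finally proves $(2)\Rightarrow(1)$ by contraposition using the ``non-dense'' machinery---Lemma~\ref{lem:genericPd3} and Proposition~\ref{prop:Ypropd3}---to show that when $d_3(C^3)$ is not dense one can extract infinitely many symmetries from the infinite fibres of $d_3$.

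Your incidence-variety approach has the advantage of bypassing Lemma~\ref{lem:genericPd3} and Proposition~\ref{prop:Ypropd3} altogether: the two projections of $\widetilde Z$ give $(1)\Leftrightarrow(2)$ in one clean dimension count, and your $(4)\Rightarrow(1)$ then follows from the already-established positive-dimensionality of $\Sym(C,\E(\CC))$ rather than from a separate slice analysis. The paper's route, by contrast, keeps everything phrased in terms of the maps $d_3,d_4$ and their slices $\mathcal Y$, which is more uniform with the surrounding joint-signature theory but requires invoking the parallel ``non-dominant'' lemmas. Your $(1)\Rightarrow(3)$ step is substantively the same as the paper's $(3)\Rightarrow(2)$, just read in the other direction; the verification that $g_q\in\Sym(C,\E(\CC))$ via Proposition~\ref{prop:Yprop} (with the observation that $\mathcal Y$ depends only on the $\E(\CC)$-orbit of the base triple, and that stabilisers of $\E(\CC)$-translates of generic triples remain trivial) matches the paper's reasoning.
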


\begin{proof}

First note that (3) and (4) are equivalent \cite[Ch 1, Sec 6.3, Thm 1.25]{Shaf94}. We now show that (4) and (1) are equivalent. Suppose (1) is true and $\dim(\JS_C) = 3$. Then for a generic point $p\in \JS_C$, $d_3^{-1}(p)$ is finite, but $d_4^{-1}(p)$ is infinite (again by  \cite[Ch 1, Sec 6.3, Thm 1.25]{Shaf94}). This implies that $d_4(p_1\times p_2\times p_3\times C)$ is a single point, which is a contradiction by a similar argument as in Lemma \ref{lem:genericP}. Now suppose that (1) is \textbf{not} true. Then a generic fiber of $d_3$ is infinite, implying the same is true for $d_4$. Thus $\dim(\JS_C) < 4$.

Now suppose (3), that $d_4$ is generically $n:1$, and consider a generic point $p = (p_1,p_2,p_3,p_4) \in C^4$. Then $(p_1,p_2,p_3)$ is generic in $C^3$, and by Lemma \ref{lem:noncollinear} has trivial stabilizer subgroup. By Proposition \ref{prop:invsep} and Lemma \ref{lem:genpt} the fiber $d_4^{-1}(d_4(p))$ is exactly the orbit of $p$ under $\E(\CC)$ intersected with $C^4$, and has cardinality $n$ by assumption. Note that $\Sym(C,\E(\CC))\leq n$, since $p$ has trivial stabilizer subgroup implying that each element of $\Sym(C,\E(\CC))$ sends $p$ to a distinct point on $C^4$.

Consider $q\in d_4^{-1}(d_4(p))$. Then there exists $g\in \E(\CC)$ such that $g\cdot q = p$. Define $\overline {C} : = g\cdot C$. Then $\overline{C}$ and $C$ share the point $p$. Since (3) and (1) and equivalent, by Lemma \ref{lem:genericP} $p$ satisfies the hypothesis of Proposition \ref{prop:Yprop}. Thus there exists $h$ in the stabilizer of $(p_1,p_2,p_3)$ such that $h\cdot \overline{C} = C$. Since the stabilizer subgroup is empty, $h$ is the identity and $\overline{C} = g\cdot C = C$. Thus $g\in \Sym(C,\E(\CC))$. Since $g\cdot q = p$, for each $q\in d_4^{-1}(d_4(p))$ we get a distinct element of $\Sym(C,\E(\CC))$, and hence $|\Sym(C,\E(\CC))| =n$. Thus (3) implies (2).

Finally suppose that the symmetry group of $C$ is of finite cardinality. If (1) is true, then we are done. Thus suppose that the image $d_3(C^3)$ is \textbf{not} Zariski-dense in $\CC^3$, and consider a generic point $p= (p_1,p_2,p_3) \in C^3$. By Proposition \ref{prop:invsep} and Lemma \ref{lem:genpt} the fiber $d_3^{-1}(d_3(p))$ is exactly the orbit of $p$ under $\E(\CC)$ intersected with $C^3$. Furthermore by Lemma \ref{lem:genericPd3} $p$ satisfies the hypothesis of Proposition \ref{prop:Ypropd3}. Note that the fiber $d_3^{-1}(d_3(p))$ is of infinite cardinality \cite[Ch 1, Sec 6.3, Thm 1.25]{Shaf94}.

Consider $q\in d_3^{-1}(d_3(p))$. Then there exists $g\in \E(\CC)$ such that $g\cdot q = p$. Define $\overline {C} : = g\cdot C$. Then $\overline{C}$ and $C$ share the point $p$. By Proposition \ref{prop:Ypropd3}, there exists $h$ in the stabilizer of $(p_1,p_2)$ such that $h\cdot \overline{C} = C$. Thus $hg\in \Sym(C,\E(\CC))$. Each distinct $q\in d_3^{-1}(d_3(p))$ yields an element $h_qg_q \in \Sym(C,\E(\CC))$. Note that since each $q$ is distinct, each $g_q$ is distinct. Since $h_q$ is in the stabilizer of $(p_1,p_2)$ which is of cardinality 2 (see proof of Lemma \ref{lem:noncollinear}), this implies that there are infinitely many elements of $\Sym(C,\E(\CC))$, which is a contradiction. Thus (2) implies (1).
\end{proof}

\noindent The next proposition immediately follows.

\begin{proposition}
For an algebraic curve $C$ the following are equivalent.

\begin{itemize}
\item[(1)] The image $d_3(C^3)$ is \textbf{not} Zariski-dense in $\CC^3$.

\item[(2)] The symmetry group $|\Sym(C,\E(\CC))|$ is infinite.

\item[(4)] $\dim(\JS_C)<4$
\end{itemize}
\end{proposition}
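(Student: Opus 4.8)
The plan is to obtain this proposition as the formal contrapositive of Proposition~\ref{prop:jointExceptionals}. That proposition establishes the equivalence of four conditions; here we need only the three conditions labeled (1), (2), (4), and each statement in the present proposition is the logical negation of the corresponding one there. So the whole argument reduces to the elementary observation that a chain of equivalences $P_1 \Leftrightarrow P_2 \Leftrightarrow P_4$ yields $\lnot P_1 \Leftrightarrow \lnot P_2 \Leftrightarrow \lnot P_4$.

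Concretely, I would first note that item (1) here literally negates item (1) of Proposition~\ref{prop:jointExceptionals}. For item (2), I would use that $\Sym(C,\E(\CC))$ is a group, so its cardinality is either a finite integer $n$ or infinite; hence ``$|\Sym(C,\E(\CC))|$ is infinite'' is exactly the negation of ``$|\Sym(C,\E(\CC))| = n < \infty$''. For item (4), I would first record the a priori bound $\dim(\JS_C) \le 4$: since $\JS_C = d_4(C^4)$ is the image of the irreducible $4$-dimensional variety $C^4$ under a morphism, its dimension is at most $4$. Therefore ``$\dim(\JS_C) < 4$'' is the negation of ``$\dim(\JS_C) = 4$''. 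Combining these three identifications with Proposition~\ref{prop:jointExceptionals} completes the proof.

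There is no real obstacle here; the only point requiring a moment's care is the bound $\dim(\JS_C)\le 4$, which is what guarantees that ``$\dim(\JS_C)<4$'' is genuinely complementary to condition (4) of the earlier proposition rather than merely implied by its failure. One could also state and prove this proposition jointly with Proposition~\ref{prop:jointExceptionals} as a single eight-part dichotomy, but I prefer to keep the two statements separate and deduce the second from the first by negation.
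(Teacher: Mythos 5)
Your proposal is correct and matches the paper's intent exactly: the paper states only that this proposition ``immediately follows'' from Proposition~\ref{prop:jointExceptionals}, i.e., it is obtained by negating each of the corresponding conditions. Your added care about the a priori bound $\dim(\JS_C)\le 4$ (so that the dimension conditions are genuinely complementary) is a reasonable detail the paper leaves implicit.
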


We end this section with a discussion about joint signatures for other algebraic group actions. In \cite{O01}, the author presents a \textit{smooth} characterization of joint invariants for many of these groups over $\RR$. For instance, consider the action of $\SA(\RR)$ on $n$-tuples of $\RR^2$. The fundamental joint invariant are given by the signed area functions $v(i,j,k)$ for $1\leq i < j < k \leq n$ (\cite[Thm 3.3]{O01}) where
$$
v(i,j,k) = x_i(y_j-y_k) - x_j(y_i-y_k) + x_k(y_i-y_j).
$$
Though the number of such invariants increases in size rapidly as $n$ grows, there exists many linear syzygies between these functions. In particular the invariants $v(1,2,k), v(1,3,k)$ for $k=2,\hdots, n$ generate the other invariants \cite[Thm 8.8]{O01}. Thus for curves under $\SA(\CC)$ we can define the map $v_6: (\CC^2)^6 \rightarrow \CC^7$ by
\begin{equation}\label{eq:EqAffSig}
(x_1,y_1,\hdots,x_6,y_6)\mapsto  (v(1,2,3), v(1,2,4), v(1,2,5), v(1,2,6), v(1,3,4), v(1,3,5), v(1,3,6)).
\end{equation}
Mirroring the construction for real curves under $\SA(\RR)$ in \cite[Ex. 8.6]{O01}, we can define the \textbf{equi-affine joint signature} of a curve $C$ to be $\JA_C = v_6(C^6)$. Though there are fundamental invariants for $n$ as low as 3, it is necessary to consider 6-tuples of points on curves since all curves have the same image under the map $v_n$ (defined as above) when $n < 6$. For other groups, the fundamental joint invariants presented in \cite{O01} similarly yield sets of algebraic invariants. It would be interesting to construct general conditions for sets of joint invariants to characterize orbits of curves.

\begin{remark}\label{rem:EqAff}
While we conduct experiments in Section \ref{sec:implementation} comparing the equi-affine joint signature to the equi-affine differential signature, we do not explicitly prove that $\JA_C$ characterizes orbits of curves under $\SA(\CC)$, as we do for the Euclidean joint signature. However, as the seven area invariants defining $v_6$ generate the other fundamental area invariants through linear relations, it is likely that they separate orbits and that one can prove this characterization using an argument similar to that in this section.
\end{remark}

\section{Witness sets for signatures}\label{sec:nag}

\subsection{Background}\label{sec:nagbackground}

\label{subsec:nag}

A comprehensive overview of numerical algebraic geometry may be found in the survey~\cite{SVW05} or books~\cite{SW05,BHSW13}.
Here we develop the notions that we need, illustrated by several examples related to the previous section.

The main data structures in numerical algebraic geometry are variations on the notion of a \emph{witness set.}
The overarching idea is to represent an irreducible variety $Y\subset \CC^n $ by its intersection with a generic affine linear subspace of complementary dimension.
The number of points in such an intersection is the degree $\deg Y,$ which may be understood as the degree of the projective closure of $Y$ under the usual embedding $\CC^m \ni (x_1, \ldots, x_m) \mapsto [x_1 : \cdots : x_m : 1] \in \PP (\CC^{m+1}).$ We define a $c$-slice in $\CC^m$ to be a polynomial system consisting of $c$ affine hyperplanes, $L = (l_1, \ldots , l_c)$ with $l_i \in \CC [x_1, \ldots , x_m]_{\le 1}.$ For convenience we write $L$ in place of $V(L(x))$ and also use the notation $L^c.$ For $Y$ an irreducible variety of codimension $c$ and a generic slice $L^c,$ the intersection $Y\cap L^c$ is \emph{transverse}, consisting of $\deg Y$ isolated, nonsingular points.

The notion of a \emph{pseudo-witness set}, first appearing in~\cite{HS10}, allows us to represent the closed image of a rational map $Y = \imclo{\Phi}$ without knowing its implicit defining equations. Our Definition~\ref{def:witness-set} differs slightly from that used in the standard references~\cite{HS10,HS13,BHSW13}; to distinguish our setup, we provisionally use the term \emph{weak pseudowitness set.} 
\begin{defn}
  \label{def:witness-set}
Let $V(f)\subset \CC^n$ be Zariski-closed, $X\subset V(f)$ be one of its irreducible components, and $\Phi : X \ratmap \CC^m$ be a rational map. Set $c= \codim V(f),$ $d= \dim \imclo{\Phi}.$ A weak pseudowitness set for $\Phi$ is a quadruple $\left( f, \Phi, (L|L'), \{ w_1, \ldots , w_e \} \right),$ where $L$ is a generic affine $(m-d)$-slice of $\imclo{\Phi },$ $L'$ is a generic affine $(c-m+d)$-slice of $X,$ and such that $w_1, \ldots , w_e$ are points in $X \cap L'$ where $\Phi $ is defined such that $\imclo{\Phi} \cap L = \{ \Phi (w_1), \ldots , \Phi (w_e) \}$ and $e=\deg \imclo{\Phi}.$
\end{defn}

\begin{example}
\label{example:ellipse}
Consider again the ellipses from Example~\ref{Ex:E2Inv}. We represent $Y=\imclo{\sigma_{C_1}} =\imclo{\sigma_{C_2}}$ not by the signature polynomial, but rather by its intersection with a generic slice in the codomain: $L^1 = \{ l_1 x + l_2 y + l_3 = 0 \}.$ For the particular choice of $(l_1,l_2,l_3) = (1,-2,1),$ we have that $\imclo{\sigma_{C_i}} \cap  L^1$ consists of $6$ points $(x_i, y_i)\in \CC^2$:
\begin{center}
\begin{tabular}{ccc}$x_i$ & & $y_i$\\\hline$.0305676-.0677494\, i $ & &
      $.515284-.0338747\, i$\\$-.120636-.0158199\, i $ & & $.439682-.00790993\,
      i$\\$-.120636+.0158199\, i $ & & $.439682+.00790993\, i$\\$.501814 $ & &
      $.750907$\\$.0305676+.0677494\, i $ & & $.515284+.0338747\, i$\\$4.17832
      $ & & $2.58916$\end{tabular}
\end{center}
\end{example}

Compared to the more standard definition of a witness set, in Definition~\eqref{def:witness-set} we allow that the containment $\{ w_1 , \ldots , w_e \} \subset L' \cap \Phi^{-1} \left( \imclo{\Phi} \cap L\right)$ may be proper.
When $\Phi $ is the signature of a curve with many symmetries, this may be preferable, since fewer points need to be stored
due to Theorem~\ref{thm:diffsigsymgroup} and Proposition~\ref{prop:jointExceptionals}.
The data in Definition~\ref{def:witness-set} are already sufficient for testing queries of the form $y\in \imclo{\Phi},$ as noted in~\cite[Remark 2]{HS10}. For testing, $y\in \im \Phi $ and other applications, the stronger notion is required~\cite{HS13}. Further applications of pseudowitness sets may be found in the references~\cite{CK19,Bry18,Reganpseudo,Sottilepseudo}. 

In our context, equations defining $\imclo{\Phi}$ are seldom known, so in what follows we may informally refer to the objects of Definition~\ref{def:witness-set} and their multiprojective counterparts in Definition~\ref{def:multi-witness-set} as ``witness sets'' without ambiguity. In practice, we can at best hope that our numerical approximations to points $\Phi(w_1), \ldots , \Phi(w_e)$ lie sufficiently close to $\imclo{\Phi} \cap L$: to clearly distinguish practice from theory, we occasionally use the term \emph{numerical (weak / pseudo) witness set.} 

Following~\cite{HR15,LRS18,HLRS}, we give a multiprojective generalization of Definition~\ref{def:witness-set}. For irreducible $Y\subset \CC^m,$ we fix $(m_1,\ldots , m_k),$ an integer partition of $m,$ and consider $Y$ in the affine space $\CC^{m_1}\times \cdots \times \CC^{m_k}.$ We consider slices $L^{\bs{e}}=L^{e_1}|\cdots |L^{e_k},$ where $\bs{e} = (e_1,\ldots , e_k)\in \NN^k$ is an integral vector such that $e_1+\cdots e_k= \dim Y,$ and $L^{e_j}$ is a $e_j$-slice consisting of $e_j$ affine hyperplanes in the coordinates of $\CC^{m_j}.$ We say that $\bs{e}$ is a \emph{multidimension} of $Y$ if for generic $L^{\bs{e}}$ the intersection $Y \cap L^{\bs{e}}$ is a finite set of nonsingular points; the number of points for such $L^{\bs{e}}$ is a constant called the \emph{$\bs{e}$-multidegree} $\deg_{\bs{e}} Y.$
These definitions reflect the geometry of the \emph{multiprojective closure} of $X$ under the emebedding
\begin{displaymath}
Y \ni (y_1,\ldots , y_m) \mapsto
\Big( [y_1:\cdots : y_{m_1} : 1] , \, \ldots .
[y_{m- m_k +1} : \cdots : y_{m} : 1 ] \Big) \in \PP (\CC^{m_1+1}) \times \cdots \times \PP (\CC^{m_k+1}).
\end{displaymath}
\begin{defn}
  \label{def:multi-witness-set}
Let $f, X, c, L', \Phi$ be as in~\ref{def:witness-set}, and $\bs{e}$ be a multi-dimension of $\imclo{\Phi}$ corresponding to some partition of $n.$ An $\bs{e}$-weak pseudowitness set for $\Phi$ consists of $\big( f, \Phi, (L^{\bs{e}}|L'),$ $\{ w_1, \ldots , w_e \} \big),$ such that $\imclo{\Phi} \cap L^{\bs{e}} = \{ \Phi (w_1), \ldots , \Phi (w_e) \}$ and $e=\deg_{\bs{e}} \imclo{\Phi}.$
\end{defn}
\begin{example}
\label{example:ellipseMulti}
Continuing as in Example~\ref{example:ellipse}, we now consider coordinate slices in the codomain of $\sigma_{C_i}$ of the form $L^{(1,0)} =\{ l_1 x + l_2 y + l_3 = 0 \}.$ Specializing to the generic slice $(l_1,l_2) = (3, 1)$ yields now $3$ points:
\begin{center}
\begin{tabular}{ccc}$x_i$ & & $y_i$\\\hline$-.333333 $ & &
      $1.53234+1.11277\, i$\\$-.333333 $ & & $1.53234-1.11277\, i$\\$-.333333 $
      & & $-6.06468$\end{tabular}
\end{center}
\end{example}

The general membership test for multiprojective varieties proposed in~\cite{HR15} uses the stronger notion of a witness collection. This is required since for an arbitrary point $y\in Y$ there may not exist transverse slices $L^{\bs{e}}\ni y$ for $\bs{e}$ ranging over all multidimensions of $Y$---see~\cite[Example 3.1]{HR15}. This subtlety is not encountered for generic $y\in Y$; we record this basic fact in Proposition~\ref{prop:Le}.
\begin{prop}
\label{prop:Le}
Fix irreducible $Y\subset \CC^{m_1} \times \cdots \times\CC^{m_k}$ and $\bs{e}$ some multi-dimension of $Y.$ For $y = (y_1,\cdots , y_k) \in Y$ generic,
there exists an $\bs{e}$-slice $L^{\bs{e}}\ni y$ such that $\dim (Y \cap L^{\bs{e}})=0.$ Moreover, for $y\notin Y_{sing},$ we also have that $y \notin (Y \cap L^{\bs{e}})_{sing}$ for generic $L^{\bs{e}}.$
\end{prop}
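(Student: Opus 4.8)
The plan is to pass to the incidence variety of ``pointed'' $\bs{e}$-slices and deduce both statements from one elementary fact: a proper Zariski-closed subset of an irreducible variety cannot contain the generic fibre of a dominant morphism whose fibres are irreducible of constant dimension. First I would set up notation. Let $\mathcal{M}$ be the irreducible variety of all $\bs{e}$-slices $L^{\bs{e}}=L^{e_1}|\cdots|L^{e_k}$ of $\CC^{m_1}\times\cdots\times\CC^{m_k}$; since the affine hyperplanes of $\CC^{m_j}$ form an $m_j$-dimensional family, $\dim\mathcal{M}=\sum_j e_j m_j$. Form the incidence variety $\Gamma=\{(y,L)\in Y\times\mathcal{M} : y\in L\}$ with its projections $\pi_Y\colon\Gamma\to Y$ and $\pi_{\mathcal{M}}\colon\Gamma\to\mathcal{M}$, and write $\Gamma_y$ for the fibre of $\pi_Y$ over $y$. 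Since $\Gamma_y$ is a product over $j$ of $e_j$ copies of the (irreducible, $(m_j-1)$-dimensional) space of affine hyperplanes of $\CC^{m_j}$ through $y_j$, the map $\pi_Y$ has irreducible fibres of constant dimension $\delta:=\sum_j e_j(m_j-1)=\dim\mathcal{M}-\dim Y$, so $\Gamma$ is irreducible of dimension $\dim\mathcal{M}$. Because $\bs{e}$ is a multidimension, a generic $L\in\mathcal{M}$ meets $Y$ in $\deg_{\bs{e}}Y\ge 1$ nonsingular points; in particular $\pi_{\mathcal{M}}$ is dominant and generically finite.

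For the first assertion, let $Z\subseteq\Gamma$ be the preimage under $\pi_{\mathcal{M}}$ of $\{L : \dim(Y\cap L)\ge 1\}$. This is Zariski-closed by upper semicontinuity of fibre dimension, and proper since $\pi_{\mathcal{M}}$ is generically finite. Decomposing $Z$ into irreducible components and invoking the theorem on the dimension of fibres \cite{Shaf94}: a component that does not dominate $Y$ misses $\Gamma_y$ for generic $y$, while a component that dominates $Y$ has generic fibre of dimension at most $\dim\Gamma-1-\dim Y=\delta-1<\dim\Gamma_y$, hence meets the irreducible variety $\Gamma_y$ in a proper closed subset. Intersecting finitely many dense open subsets of $Y$, we get $\Gamma_y\not\subseteq Z$ for generic $y$, and any $L$ in the nonempty open set $\Gamma_y\setminus Z$ is an $\bs{e}$-slice through $y$ with $\dim(Y\cap L)=0$.

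For the ``moreover,'' assume $y\notin Y_{sing}$. The extra input is that a generic $L\in\mathcal{M}$ has $Y\cap L\subseteq Y\setminus Y_{sing}$: the incidence $\{(y,L):y\in Y_{sing}\cap L\}$ has dimension at most $\dim Y_{sing}+\delta<\dim Y+\delta=\dim\mathcal{M}$, so it cannot dominate $\mathcal{M}$. Combined with the multidimension hypothesis, this produces a point $(y_0,L_0)\in\Gamma$ with $y_0\in Y\setminus Y_{sing}$ at which $L_0$ meets $Y$ transversally. Let $Z'$ be the closed locus inside $\Gamma^{\circ}:=\Gamma\setminus\pi_Y^{-1}(Y_{sing})$ where $L$ fails to be transverse to $Y$ at $y$ (a maximal-rank condition); by the sentence just given $Z'\ne\Gamma^{\circ}$, so $Z'':=Z\cup\overline{Z'}$ is a proper closed subset of $\Gamma$. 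Running the component-wise fibre-dimension argument once more yields $\Gamma_y\not\subseteq Z''$ for generic $y$, which we may take in $Y\setminus Y_{sing}$; then a generic $L\in\Gamma_y$ satisfies both $\dim(Y\cap L)=0$ and transversality of $L$ to $Y$ at $y$, which at a smooth point of $Y$ is exactly the statement $y\notin(Y\cap L)_{sing}$.

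The only genuinely non-formal ingredient is the notion of multidimension, which is used twice — to make $\pi_{\mathcal{M}}$ generically finite, and (in the form ``the generic slice meets $Y$ in $\deg_{\bs{e}}Y\ge 1$ reduced points'') to produce the transverse sample point $(y_0,L_0)$; the positivity $\deg_{\bs{e}}Y\ge 1$ is essential, as otherwise the statement fails, e.g.\ for $Y=\{0\}\times\CC\subset\CC\times\CC$ with $\bs{e}=(1,0)$. The step I expect to be most delicate is checking that the generic $\bs{e}$-slice avoids $Y_{sing}$: because the slicing hyperplanes are constrained to the ``product'' families, this does \emph{not} follow from a naive count of hyperplanes in $\CC^m$ together with $\dim Y_{sing}<\dim Y$; the clean route is the incidence-variety count above, which works precisely because $\pi_Y$ has fibres of constant dimension $\delta$ over every point, including points of $Y_{sing}$.
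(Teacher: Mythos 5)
Your proof is correct, and it takes a genuinely different route from the paper's. The paper argues factor by factor: it projects $Y$ to each $\CC^{m_j}$ in turn, picks a generic $e_j$-slice through the (generic) coordinate $y_j$, uses the theorem on dimensions of fibers to drop the dimension by $e_j$ at each stage, and then simply cites Bertini for the nonsingularity claim. You instead work globally with a single incidence variety $\Gamma\subset Y\times\mathcal{M}$ and exploit the fact that $\pi_Y$ has irreducible fibers of constant dimension $\delta=\dim\mathcal{M}-\dim Y$ over \emph{every} point of $Y$ (including points of $Y_{sing}$), so that any proper closed ``bad'' locus in $\Gamma$ must meet the generic fiber $\Gamma_y$ in a proper closed subset. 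This treats the dimension statement and the transversality statement by one mechanism, avoids the bookkeeping of iterated projections (where the intermediate intersections $Y\cap L^{e_1}\cap\cdots$ need not remain irreducible), and makes the smoothness part elementary rather than an appeal to Bertini. Two minor points: (i) for the non-proper map $\pi_{\mathcal{M}}$ the locus $\{L:\dim(Y\cap L)\ge 1\}$ is only guaranteed to be constructible, so you should pass to the Zariski closure $\overline{Z}$ --- harmless, since properness of $\overline{Z}$ in $\Gamma$ is all your fiber-dimension argument uses, and $\Gamma_y\not\subseteq\overline{Z}$ still yields a slice with zero-dimensional intersection; (ii) your observation that the statement needs $\deg_{\bs{e}}Y\ge 1$ is a real (if degenerate) caveat, and the paper's proof implicitly assumes it as well, since it requires $e_j\le\dim\pi_j(Y)$ at each step.
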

\begin{proof}
For generic $y_1$ in the image of $\pi_1: Y \to \CC^{m_1}$ we have that the fiber $\pi_1^{-1} (x_1)$ has dimension $\dim Y-\dim \pi_1(Y).$ Choose such an $y_1$ and let $L^{e_1}\ni y_1$ be generic so that $\pi_1 (Y) \cap L^{e_1}$ has dimension $\dim \pi_1(Y)-e_1.$ It follows that $Y \cap L^{e_1}$ has dimension $\dim Y-e_1.$ This construction holds for all $y_1$ on some Zariski open $U_1\subset \pi_1 (Y).$ Repeating this construction for the remaining factors yields $U_2,\ldots , U_k$ such that the first part holds for all $y\in U_1\times \cdots  \times U_k.$ The second part follows from Bertini's theorems, eg.~\cite[Thm 17.16]{Har13}.
\end{proof}

\subsection{A general equality test}\label{sec:equality}

Now let $\Phi_0 : X_0 \ratmap \CC^m$ and $\Phi_1: X_1 \ratmap \CC^m$ denote two rational maps with each $X_i \subset \CC^{n_i}$ of codimension $c_i.$ Problem~\ref{problem:map} from the introduction asks us to decide whether or not their images are equal up to Zariski closure. We describe a probabilistic procedure )Algorithm~\ref{alg:equality-test}) which refines the general membership and equality tests from numerical algebraic geometry, which are summarized in~\cite[Ch.~13, 15]{SW05} and~\cite[Ch.~8,16]{BHSW13}. As noted in the Introduction, our setup is motivated by an efficient solution to Problem~\ref{problem:group}. Following the standard terminology, our test correctly decides equality with ``probability-one'' in an idealized model of computation. This is the content of Theorem~\ref{thm:correctness}. Standard disclaimers apply, since any implementation must rely on numerical approximations in floating-point. A thorough discussion of these issues may be found in \cite[Ch. 3, pp. 43-45]{BHSW13}.

Algorithm~\ref{alg:equality-test} assumes different representations for the two maps. The map $\Phi_1$ is represented by a witness set in the sense of Definition~\ref{def:witness-set}, say $(f_1, \Phi_1, (L_1|L_1'),\{ w_1, \ldots , w_e \}).$ In fact, the only data needed by Algorithm~\ref{alg:equality-test} are the map itself $\Phi_1,$ the slice $L_1,$ and the points $w_1,\ldots , w_e.$ For the map $\Phi_0,$ we need only a sampling oracle that produces generic points on $X_0$ and $\codim (X_0)$-many reduced equations vanishing on $X_0.$

Suppose $\dim \imclo{\Phi_0} = \dim \imclo{\Phi_1} = d.$ There is a probabilistic membership test for queries of the form $\Phi_0(x_0) \in \imclo{\Phi_1 }$ based on homotopy continuation. The relevant homotopy depends parametrically on $L_1,$ a $(m-d)$-slice $L_0\ni \Phi_0 (x_0),$ a $(c_0-m+d)$-slice $L_0'\ni x_0,$ and a regular sequence $f_0=(f_{0,1},\ldots , f_{0,c_0})$ which is generically reduced with respect to $X_0.$ The homotopy $H$ is defined by setting
\begin{equation}
\label{eq:homotopy}
H(x;t) = \left(
\begin{array}{c}
f_0 (x)\\
L_0' (x)\\
t \, L_1 \circ \Phi_0 + (1-t)\, L_0 \circ \Phi_0  (x)
\end{array}\right) =0.
\end{equation}
In simple terms, $H$ moves a slice through $\Phi_0 (x_0)$ to the slice witnessing $\imclo{\Phi_1}$ as $t$ goes from $0$ to $1.$ A solution curve associated to~\eqref{eq:homotopy} is a smooth map $x:[0,1]\to \CC^n$ such that $H(x(t),t)=0$ for all $t.$ For generic parameters $L_0, L_1, L_0'$ the Jacobian $H_x (x,t)$ is invertible for all $t\in [0,1],$ solution curves satisfy the ODE
\begin{displaymath}
x'(t) = - H_x (x,t)^{-1} H_t (x,t),
\end{displaymath}
and each of the points $w_1,\ldots ,w_e$ is the endpoint of some solution curve $x$ with $x(0) \in X \cap L_0'.$ These statements follow from more general results on \emph{coefficient-parameter homotopy}, as presented in~\cite{MS89} or~\cite[Thm 7.1.1]{SW05}. We assume a subroutine $\TRACK (H,x_0)$ which returns $x(1)$ for the solution curve based at $x_0.$ In practice, the curve $x(t)$ is approximated by numerical predictor/corrector methods~\cite{AG12, Mor09}. We allow our $\TRACK$ routine to fail; this will occur, for instance, when $\Phi_0 (x_0)$ is a singular point on $\imclo{\Phi_0 }.$ However, it will succeed for generic (and hence \emph{almost all}) choices of parameters and $x_0 \in \CC^{n_0}.$ 
\begin{figure}
\begin{algorithm}{Probability-1 equality test}
\label{alg:equality-test}
\begin{algorithmic}[1]
    \smallskip\hrule\smallskip
    \REQUIRE Let $X_0\subset \CC^{n_0} ,X_1 \subset  \CC^{n_1}$ be irreducible algebraic varieties, and $\Phi_0 : X_0 \to \CC^m,$ $\Phi_1 : X_1 \to \CC^m$ be rational maps, represented via the following ingredients:
  \begin{itemize}
    \setlength\itemsep{.25em}
  \item[1)] $(L_1, \{ w_1, \ldots , w_e \})$ with $\imclo{\Phi_1} \cap L_1=\{ \Phi_1 (w_1),\ldots , \Phi_e (w_e) \}$ and $e= \deg \imclo{\Phi_1}$
  (cf.~Definition~\ref{def:witness-set}),\\
  \item[2)] $f_{0,1},\ldots , f_{0,c_0} \in \CC[x_1,\ldots , x_{n_0}]$: a generically reduced regular sequence such that $\codim (X_0) = c_0$ and $X_0 \subset V(f_1,\ldots , f_{c_0}),$
  \item[3)] an oracle for sampling a point $x_0 \in X_0,$ and\\
  \item[4)] explicit rational functions representing each map $\Phi_i.$
  \end{itemize}
  \ENSURE YES if $\imclo{\Phi_0} = \imclo{\Phi_1}$ and NO if $\imclo{\Phi_0} \ne \imclo{\Phi_1}.$
  \STATE sample $x_0 \in X_0$
\STATE $T_{x_0} (f) \gets \ker \, \diff{f}{x_0}$
\STATE $d\gets \rk \, \diff{\Phi_0}{x_0}\big|_{T_{x_0} (f)}$
\STATE \textbf{if} $d\ne \dim \imclo{\Phi_1}$ \textbf{then} \textbf{return} NO
\STATE $H(x;t) \gets$ the homotopy from equation~\ref{eq:homotopy}
  \STATE $x_1 \gets \TRACK \left(H, x_0\right)$
  \STATE \textbf{if} $\Phi_0(x_1) \in \{ \Phi_1 (w_1), \ldots , \Phi_1(w_e) \}$ \textbf{return} YES\\
  \indent \, \, \, \textbf{else return} NO
\smallskip\hrule
  \end{algorithmic}
  \end{algorithm}
\caption{A general, probabilistic equality test for rational maps.}
\end{figure}
\begin{thm}
\label{thm:correctness}
For generic $x_0, L_0, L_0',L_1,$ Algorithm~\ref{alg:equality-test} correctly decides if $\imclo{\Phi_0}=\imclo{\Phi_1}.$
\end{thm}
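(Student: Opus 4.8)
The plan is to verify that each step of Algorithm~\ref{alg:equality-test} is individually correct for generic choices of the parameters $x_0, L_0, L_0', L_1$, and then combine these into the overall correctness statement. The first observation is that steps 2--4 correctly compute $d = \dim \imclo{\Phi_0}$: for generic $x_0 \in X_0$, the point $x_0$ is a smooth point of $X_0$, so $T_{x_0}(f) = \ker \diff{f}{x_0}$ is genuinely the tangent space $T_{x_0} X_0$; and for generic $x_0$, the rank of $\diff{\Phi_0}{x_0}$ restricted to this tangent space equals the dimension of the generic fiber's complement, i.e.\ $\dim \imclo{\Phi_0}$, by the theorem on dimension of fibers (as cited elsewhere via \cite{Shaf94}). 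Hence step~4 returns NO exactly when $\dim \imclo{\Phi_0} \ne \dim \imclo{\Phi_1}$, which is correct since equal varieties have equal dimension. So from this point on we may assume $\dim \imclo{\Phi_0} = \dim \imclo{\Phi_1} = d$.

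Next I would establish that the homotopy \eqref{eq:homotopy} behaves as claimed. This is where I invoke the coefficient-parameter homotopy theory of \cite{MS89} (or~\cite[Thm 7.1.1]{SW05}): for generic parameters $(L_0, L_0', L_1)$, the system $H(x;t)=0$ has the property that $H_x(x,t)$ is invertible along every solution curve for $t \in [0,1]$, and the solution curves starting at the points of $X_0 \cap L_0'$ where $\Phi_0$ is defined track without singularity to $t=1$. For generic $x_0$, the point $x_0$ lies in $X_0 \cap L_0'$ (this forces $L_0'$ to be chosen through $x_0$, which is still generic among slices through $x_0$) and is a smooth point where $\Phi_0$ is defined, so $\TRACK(H, x_0)$ succeeds and returns a point $x_1 \in X_0$ with $\Phi_0(x_1) \in \imclo{\Phi_0} \cap L_1$. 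The key point is that the $t=1$ system is $\big(f_0(x), L_0'(x), L_1 \circ \Phi_0(x)\big)$, whose solution set (projected by $\Phi_0$) is exactly $\imclo{\Phi_0} \cap L_1$, and $L_1$ being a generic $(m-d)$-slice means this intersection is transverse and consists of $\deg \imclo{\Phi_0}$ points, each hit by some tracked path.

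With $\Phi_0(x_1) \in \imclo{\Phi_0} \cap L_1$ in hand, I would argue step~7 is correct in both directions. If $\imclo{\Phi_0} = \imclo{\Phi_1}$, then $\imclo{\Phi_0} \cap L_1 = \imclo{\Phi_1} \cap L_1 = \{\Phi_1(w_1), \ldots, \Phi_1(w_e)\}$ by the witness set property, so $\Phi_0(x_1)$ lies in this finite set and the algorithm returns YES. Conversely, if $\imclo{\Phi_0} \ne \imclo{\Phi_1}$ but $\dim \imclo{\Phi_0} = \dim \imclo{\Phi_1} = d$, then neither contains the other (equal dimension, both irreducible), so $\imclo{\Phi_0} \cap \imclo{\Phi_1}$ is a proper closed subset of $\imclo{\Phi_0}$ of dimension $< d$; a generic $(m-d)$-slice $L_1$ misses this intersection entirely, hence $\Phi_0(x_1) \in (\imclo{\Phi_0} \cap L_1)$ is disjoint from $\{\Phi_1(w_1), \ldots, \Phi_1(w_e)\} = \imclo{\Phi_1} \cap L_1$, and the algorithm returns NO. (Here one uses that $L_1$, although fixed as part of the witness set for $\Phi_1$, was chosen generically, so genericity of $L_1$ with respect to the finitely many conditions arising in the analysis of $\Phi_0$ holds as well — the witness set data and the query are assumed to have been sampled independently.)

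I expect the main obstacle to be the bookkeeping around \emph{which} genericity conditions are imposed on $L_1$: it is introduced as part of the input witness set for $\Phi_1$, yet the correctness argument for the $\Phi_0$ side (transversality of $\imclo{\Phi_0} \cap L_1$, avoidance of $\imclo{\Phi_0} \cap \imclo{\Phi_1}$, smoothness of the tracked endpoint, non-failure of $\TRACK$) demands that $L_1$ also be generic relative to $\Phi_0$. The clean way to handle this is to observe that all the relevant bad loci — non-transverse slices, slices meeting a fixed proper subvariety, slices through the indeterminacy or singular locus — are proper Zariski-closed subsets of the space of $(m-d)$-slices, and a single generic $L_1$ avoids all of them simultaneously; one then folds the finitely many conditions depending on $\Phi_0$ into the overall ``generic $L_1$'' hypothesis of the theorem statement. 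A secondary, more routine point is verifying that the endpoint-counting in the coefficient-parameter homotopy really does reach \emph{every} point of $\imclo{\Phi_0} \cap L_1$ (so that no component of $\imclo{\Phi_0}$ is missed), but since $X_0$ is irreducible and $\Phi_0$ dominant onto $\imclo{\Phi_0}$, this is immediate from the parameter homotopy theorem.
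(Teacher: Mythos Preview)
Your proposal is correct and follows essentially the same route as the paper: compute $d$ via the differential at a generic $x_0$, rule out unequal dimensions, track the homotopy~\eqref{eq:homotopy} to obtain $\Phi_0(x_1)\in\imclo{\Phi_0}\cap L_1$, and then use irreducibility plus genericity of $L_1$ to separate the equal and unequal cases. Your discussion of the genericity bookkeeping on $L_1$ is more explicit than the paper's (which defers it to a remark), and your contrapositive phrasing of the ``NO'' direction is equivalent to the paper's dimension-count argument via $\dim(\imclo{\Phi_0}\cap\imclo{\Phi_1}\cap L_1)\ge 0$.

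One point you assert but do not justify: you write that $\TRACK(H,x_0)$ returns $x_1\in X_0$. The homotopy equations only force $x_1\in V(f_0)$, and $X_0$ is merely one irreducible component of $V(f_0)$. The paper closes this gap with a short topological argument: for generic slices the entire path $x(t)$ stays in $V(f_0)\setminus V(f_0)_{\sing}$, and since $X_0\setminus(X_0)_{\sing}$ is a connected component of this smooth locus in the complex topology, the path cannot leave $X_0$. You should include this step, as it is the reason the algorithm does not accidentally land on a spurious component.

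Finally, your ``secondary, more routine point'' about reaching \emph{every} point of $\imclo{\Phi_0}\cap L_1$ is unnecessary: the algorithm tracks a single path from $x_0$, and correctness only requires that $\Phi_0(x_1)$ lands somewhere in $\imclo{\Phi_0}\cap L_1$, not that all such points are hit.
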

\begin{remark}
The set of ``non-generic'' $L_1$ depends on $\Phi_0$ and $\Phi_1.$ In practice, an oracle for sampling generic points could be provided by either a parametrization or by homotopy continuation with known equations for $X_0.$ The dimension $\dim \imclo{\Phi_1}$ is implicit in the description of the witness set.
\end{remark}
\begin{proof}
Since $x_0$ is generic and $f_0$ is generically reduced, we may assume that that $d=\dim \imclo{\Phi_0}.$ Noting line 4, we are done unless $d=\dim \imclo{\Phi_1}.$ In this case, since the $\imclo{\Phi_i}$ are irreducible,
\begin{equation}
\label{eq:dimdrop}
\dim \left( \imclo{\Phi_0} \cap \imclo{\Phi_1} \right) = d
\, \, \, \, 
\Leftrightarrow
\, \, \, \, \, \, 
\imclo{\Phi_0} = \imclo{\Phi_1}.
\end{equation}
As previously mentioned, generic slices give that the solution curve $x(t)$ associated to~\ref{eq:homotopy} with initial value $x_0$ exists and satisfies $x(t) \in V(f) \setminus V(f)_{\sing}$ for all $t\in [0,1].$ The endpoint $x_1$ is, \emph{a priori}, a point of $V(f).$ Since $X_0\setminus (X_0)_{\sing}$ is a connected component of $V(f) \setminus V(f)_{\sing}$ in the complex topology and $x_0 \in X_0,$ so also must $x_1\in X_0.$ Hence $\Phi_0 (x_1) \in \imclo{\Phi_0} \cap L_1.$ Now if $\imclo{\Phi_0} = \imclo{\Phi_1},$ then clearly we must have
\begin{equation}
\label{eq:witness}
\Phi_0(x_1) \in \imclo{\Phi_1} \cap L_1 = \{ \Phi_1 (w_1), \ldots , \Phi_1 (w_e) \},
\end{equation}
as is tested on line 7. Conversely, if~\eqref{eq:witness} holds, then
\begin{displaymath}
\dim (\imclo{\Phi_0} \cap \imclo{\Phi_1} \cap L_1)\ge 0,
\end{displaymath}
which by~\eqref{eq:dimdrop} and the genericity of $L_1$ implies $\imclo{\Phi_0} = \imclo{\Phi_1}.$
\end{proof}
In the multiprojective setting, we may give a similar argument. The only added subtley is that extra genericity may be needed so that the Jacobian $H_x (x_0,0)$ is invertible. This follows from Proposition~\ref{prop:Le}.

\subsection{Witness sets for signatures}
\label{subsec:witness}

Our implementation of Algorithm~\ref{alg:equality-test} treats only the special case where the domain of each rational map is some Cartesian product of irreducible plane curves, say $X_i = C_i^k$ for some integer $k.$ For the purpose of our implementation, the various ingredients for the input to Algorithm~\ref{alg:equality-test} are easily provided. Suppose $\mathcal{I}_{C_i}=\langle f_i\rangle$ for $i=0,1.$ Then the reduced regular sequence we need is given by $\left( f_0 (x_1,y_1), \ldots , f_0 (x_k,y_k) \right).$ Sampling from $X_0$ amounts to sampling $k$ times from $C_0$; we sample the curve $C_0$ using homotopy continuation from a linear-product start system~\cite[Sec 8.4.3]{SW05}.\\\\
It remains to discuss computation of the witness set for the image of the signature map.
We now summarize the relevant techniques from numerical algebraic geometry in this setting.
For a \emph{generic plane curve} of degree $d,$
\[
f(x,y;p) = p_{0,0} + p_{1,0} x + p_{0,1} y + p_{1,1}  x\, y + \cdots + p_{0,d} y^d,
\]
it is natural to consider the signature map which is a rational function in the parameters $p=(p_{0,0}, \ldots , p_{0,d}).$
We may write the parametric signature map as $\Phi (x,y;p) = (\Phi_1 (x,y;p), \ldots, \Phi_m (x,y;p)).$
There is an associated incidence correspondence
\[
V_\Phi = \{ (x_1,y_1,\ldots , x_k, y_k, p, L) \in (\mathbb{C}^2)^k \times \mathbb{C}^{\binom{d+2}{2}} \times \mathbb{G}_{k,m} \mid f(x_i, y_i, p) = 0 , \Phi (x,y; p) \in L \},
\]
where $\mathbb{G}_{k,m}$ denotes the Grassmannian of codimension-$k$ affine subspaces of $\CC^m.$ For generic $L,$ the fiber over $(p, L)$ of the projection $\pi: V_\Phi \rightarrow \mathbb{C}^{\binom{d+2}{2}} \times \mathbb{G}_{k,m}$ is naturally identified with a pseudowitness set for the signature map of the curve corresponding to $p$, denoted $\Phi (\cdots  ; p).$

\begin{proposition}\label{prop:VPHI}
The incidence variety $V_\Phi$ is irreducible.
\end{proposition}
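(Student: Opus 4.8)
The plan is to peel the Grassmannian coordinate off of $V_\Phi$, reducing the claim to irreducibility of a simpler ``universal Cartesian power of a plane curve'', and then to induct. Write $N=\binom{d+2}{2}$, let $C_p=\{(x,y)\in\mathbb{C}^2:f(x,y;p)=0\}$, and for $1\le j\le k$ let $Z^{(j)}\subset(\mathbb{C}^2)^j\times\mathbb{C}^N$ be the common zero locus of $f(x_1,y_1;p),\dots,f(x_j,y_j;p)$, i.e. $Z^{(j)}=\{(x_1,y_1,\dots,x_j,y_j,p):(x_i,y_i)\in C_p\text{ for all }i\}$. I would first observe that a codimension-$k$ affine subspace $L\subset\mathbb{C}^m$ passing through a point $v$ is uniquely determined by $v$ together with its direction $\vec L=L-v\in\mathrm{Gr}(m-k,m)$, so that $(v,L)\mapsto(v,\vec L)$ identifies the universal pointed-subspace incidence $\{(v,L)\in\mathbb{C}^m\times\mathbb{G}_{k,m}:v\in L\}$ with $\mathbb{C}^m\times\mathrm{Gr}(m-k,m)$. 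Pulling this back along the rational map $\Phi$ yields an isomorphism between the part of $V_\Phi$ lying over the domain of definition of $\Phi$ and $Z^\circ\times\mathrm{Gr}(m-k,m)$, where $Z^\circ\subset Z^{(k)}$ is the open subset on which $\Phi$ is defined. Since $V_\Phi$ is the Zariski closure of this locus and $\mathrm{Gr}(m-k,m)$ is irreducible, it suffices to prove that $Z^{(k)}$ is irreducible (which in turn makes $Z^\circ$ dense in it, as required).

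I would prove irreducibility of $Z^{(k)}$ by induction on $k$. For $k=1$ the linear form $p\mapsto f(x,y;p)$ is nowhere identically zero (its constant-term coefficient is $1$), so the projection $Z^{(1)}\to\mathbb{C}^2$ realizes $Z^{(1)}$ as a rank-$(N-1)$ vector subbundle of the trivial bundle $\mathbb{C}^2\times\mathbb{C}^N$, hence irreducible of dimension $N+1$. For the step $k-1\rightsquigarrow k$, consider $\pi\colon Z^{(k)}\to Z^{(k-1)}$ forgetting $(x_k,y_k)$; its fiber over $(x_1,y_1,\dots,p)$ is the plane curve $C_p$. By induction $Z^{(k-1)}$ is irreducible, and $Z^{(k-1)}\to\mathbb{C}^N$ is dominant (for generic $p$, $C_p$ is a nonempty irreducible curve), so over a generic point of $Z^{(k-1)}$ the parameter $p$ is generic and $C_p$ is irreducible by Bertini; thus the generic fiber of $\pi$ is irreducible of dimension $1$. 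By the standard fibration criterion---irreducible base, irreducible generic fiber, and every component of the source dominating the base jointly imply the source is irreducible (cf.~\cite{Shaf94})---it remains only to check that every irreducible component of $Z^{(k)}$ dominates $Z^{(k-1)}$.

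This last verification is where I expect the real work, and the use of the standing hypotheses, to be concentrated. Since $Z^{(k)}$ is cut out by $k$ equations inside an ambient space of dimension $2k+N$, each of its components has dimension at least $N+k$, and $Z^{(k-1)}$ has dimension $N+k-1$. Suppose $Z'$ were a component of $Z^{(k)}$ mapping into a proper closed $W\subsetneq Z^{(k-1)}$. Then $\dim Z'\le\dim W+\delta$, where $\delta$ is the dimension of the general fiber of $\pi|_{Z'}$; as this fiber lies inside $C_p$ for the general $p$-value of $W$, we get $\delta\le 1$ unless $W$ projects to the single point $p=0$, in which case $f(\cdot\,;0)\equiv 0$, the fiber jumps to all of $\mathbb{C}^2$, and $\delta\le 2$. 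In the first case $\dim Z'\le(N+k-2)+1<N+k$; in the second $\dim Z'\le 2(k-1)+2=2k<N+k$, the strict inequality being precisely the statement $N=\binom{d+2}{2}>k$, which holds for all $d\ge 2$ when $k\le 4$ (the differential and Euclidean-joint signatures) and for $d\ge 3$ when $k=6$ (the equi-affine joint signature; for small conics the construction degenerates anyway). Either way $Z'$ cannot be a component, so $\pi$ has no non-dominant components and the induction closes, giving irreducibility of $Z^{(k)}$ and hence of $V_\Phi$. The only routine loose end is that $\Phi$ restricts to a genuine, generically defined rational map on $C_p^{\,k}$ for generic $p$, so that $Z^\circ\neq\varnothing$; this is immediate, since the denominators occurring in $\Phi$ (powers of $F_x^2+F_y^2$, of $F_y$, or of a $y''$-type invariant, and none at all in the joint case, where $\Phi$ is polynomial) do not vanish identically on an irreducible curve.
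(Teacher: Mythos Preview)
Your argument is correct and shares the paper's two-stage skeleton: strip off the Grassmannian coordinate (whose fibers over $(x,y,p)$ are irreducible), reducing to irreducibility of $Z^{(k)}$, and then handle $Z^{(k)}$ by a fibration argument. The execution of the second stage differs. The paper projects $X=Z^{(k)}$ directly onto the parameter space $\CC^{\binom{d+2}{2}}$, with fiber $C_p^{\,k}$ over $p$, and argues via connectedness in the complex topology that a dense open subset of $V_\Phi$ is connected and hence irreducible. Your route is instead the inductive tower $Z^{(k)}\to Z^{(k-1)}\to\cdots$, each step having fiber $C_p$, together with an explicit Krull-dimension count ruling out components that fail to dominate. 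Your version is more carefully justified: the paper's assertion that $\pi_2$ is an ``affine bundle'' is not literally true (the fibers $C_p^{\,k}$ are not affine spaces), and your dimension check supplies exactly the argument that this loose phrase elides. The only cost is the constraint $k<\binom{d+2}{2}$ needed to exclude a spurious component over $p=0$; this could be sidestepped entirely by working with $Z^{(k)}\cap\{p\ne 0\}$ from the outset, since $\Phi$ is undefined at $p=0$ and $V_\Phi$ depends only on $\overline{Z^\circ}$, not on all of $Z^{(k)}$.
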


\begin{proof}
The fibers of the coordinate projection $\pi_1: V_{\Phi} \to (\CC^2)^k \times \CC^{\binom{d+2}{2}}$ given by $(x,y,p, L) \mapsto (x,y,p)$ are affine-linear spaces.
Thus, a Zariski-open subset of $V_\Phi $ is an affine bundle over some base $B_1\subset (\CC^2)^k\times \CC^{\binom{d+2}{2}}.$ 
Let $X$ denote the Zariski closure of the image of $\pi_1.$ 
We may then consider the coordinate projection $\pi_2: X \to \CC^{\binom{d+2}{2}},$ whose image is Zariski-dense in $\CC^{\binom{d+2}{2}}.$
Once again, $\pi_2$ restricts to an affine bundle over a base $B_2 \subset \CC^{\binom{d+2}{2}},$ which is now easily seen to be connected in the complex topology. 
Therefore, considering both $\pi_2$ and $\pi_1,$ there exists a dense Zariski-open subset $U \subset V_\Phi$ which is connected in the complex topology.
Therefore $U$ is irreducible in the Zariski topology, giving that $\overline{U} = V_\Phi $ is also irreducible.
\end{proof}

The fiber $\pi^{-1}(p,L)$ of the projection defined above is a zero-dimensional subset of $V_\Phi$ with cardinality $N$ given by the product of degree of the signature variety and the size of the symmetry group for a generic curve of degree $d$. Consider the subset $B\subset \pi(V_\Phi)$ where each fiber has the same cardinality, i.e. \[
B = \{(p,L)\, :\, |\pi^{-1}(p,L)| = N\}.
\]
Note that a generic $(p,L)\in\mathbb{C}^{\binom{d+2}{2}} \times \mathbb{G}_{k,m}$ lies in $B$.
For a fixed $(p,L)\in B$, the \emph{monodromy group} $\calM (\pi ; p,L)$ is a permutation group which acts on the fiber of $\pi^{-1}(p,L)$ by lifting loops in $B$ based at $(p,L)$ to $V_\Phi$.
Proposition \ref{prop:VPHI} acts transitively on the points in the pseudowitness set.
It now follows that, for the curve of interest given by $p_1\in \CC^{\binom{d+2}{2}},$ we may compute a pseudo-witness set for the signature $\Phi_1 = \Phi (\cdots ; p_1)$ using the following steps, which are standard in numerical algebraic geometry:
\begin{itemize}
\item[1)] Fix generic $(x_0,y_0)\in \CC^{2k}$, and find $(p_0, L_0)$ so that $(x,y,p_0, L_0) \in V_\Phi$ by solving linear systems of equations.
\item[2)] Using the transitivity of the monodromy group, complete $(x_0,y_0)$ to a pseudowitness set for the curve given by $p_0.$
\item[3)] The pseudowitness set for $p_1$ will consist of finite endpoints as $t\to 1$ of the homotopy (see.~\cite{MS89})
\begin{equation}
\label{eq:homotopy2}
H_p(x;t) = \left(
\begin{array}{c}
f(x,y; t p_1 + (1-t) p_0 )\\
L \circ \Phi (x, y; t p_1 + (1-t) p_0)
\end{array}\right) =0.
\end{equation}
\end{itemize}

Viewed as a subgroup of the symmetric group $S_N,$ it natural to ask how the monodromy group $\calM (\Phi ; p, L)$ depends on the type of signature map $\Phi $ and the generic curve degree $d.$
As soon as $d$ is large enough, a generic curve specified by $p$ will have a trivial symmetry group.
In our experiments, our computations show that each monodromy group $\calM (\Phi ; p, L)$ is the entire symmetric group $S_N$ in such cases.
In general, we have that $\calM (\Phi ; p, L)$ is a subgroup of the \emph{wreath product} $S_{N_1} \wr S_{N_2},$ where $N_1$ is the size of the generic symmetry group and $N_2 = \deg \imclo{\Phi_{p}}.$
Thus, for families of curves with a nontrivial symmetry group,  $\calM (\Phi ; p, L)$ is \emph{imprimitive}, in which case the \emph{decomposable monodromy} technique from~\cite{AR16} may be used to speed up witness set computation.
Finally, we note that Proposition~\ref{prop:VPHI} and the homotopy from Equation~\eqref{eq:homotopy2} can be considered in more structured settings---for instance, when the curves $f(x,y;p)$ are drawn from a linear subspace of $p\in \CC^{\binom{d+2}{2}},$ or when the image slices $L$ are multiprojective as in the sense of Definition~\ref{def:multi-witness-set}.
We leave the study of monodromy groups in these settings as an interesting direction for further research.

\section{Implementation, examples, and experiments}\label{sec:implementation}

Our results showcase features of the \verb|NumericalAlgebraicGeometry| ecosystem in Macaulay2 (aka \verb|NAG4M2|, see~\cite{Ley11, Ley18} for an overview.) We rely extensively on the core path-tracker and the packages \verb|SLPexpressions| and \verb|MonodromySolver|. All of our examples and experiments deal with differential and joint signatures for either the Euclidean or equi-affine group.\footnote{For details we refer to the code: \url{https://github.com/timduff35/NumericalSignatures.}} However, the current functionality should make it easy to study other group actions and variations on the signature construction in the future. 

The differential signatures for curves under $\E(\CC)$ and $\SA(\CC)$ are defined in Examples \ref{Ex:E2Inv} and \ref{ex:EqAffDiff} respectively, and the joint signatures are defined in Definition \ref{def:jointsig} and in \eqref{eq:EqAffSig}. To distinguish between the two groups, for a curve $C$, we denote the Euclidean differential and joint signatures of $C$ as $\JE_C$ and $\SigE_C$ respectively. Similarly we denote the equi-affine differential and joint signatures of $C$ as $\JA_C$ and $\SigA_C$. We caution that we do not explicitly prove that $\JA_C$ characterizes the equivalence class of $C$ under $\SA(\CC)$, as we do for the Euclidean joint signature. However as we explain in Remark \ref{rem:EqAff} it is likely that it does.

We explain some aspects of our implementation that appear to give reasonable numerical stability. A key feature is that polynomials and rational maps are given by straight-line programs as opposed to their coefficient representations. This is especially crucial in the case of differential signatures, where we can do efficient evaluation using the formulas in equation~\ref{eq:SLP}; we note that expanding these rational functions in the monomial basis involves many terms and does not suggest a natural evaluation scheme. We also homogenize the equations of our plane curves and work in a random affine chart. Finally, in our sampling procedure we discard samples which map too close to the origin in the codomain of our maps, as these tend to produce nearly-singular points on the image.
\begin{example}
\label{ex:quartics}
The code below computes a witness set for the Euclidean differential signature of a ``generic'' quartic (whose coefficients are random complex numbers of modulus 1.)
\begin{verbatim}
(d, k) = (4, 1);
dom = domain(d, k);
Map = diffEuclideanSigMap dom;
H = witnessHomotopy(dom, Map);
W = runMonodromy H;
\end{verbatim}
To compute a witness set for the differential signature of the Fermat quartic $V( x^4+y^4+z^4)\subset \PP (\CC^3),$ we use the previous computation.
\begin{verbatim}
R = QQ[x,y,z];
f=x^4+y^4+z^4;
Wf = witnessCollect(f, W)
\end{verbatim}
The output resulting from the last line reads
\begin{verbatim}
witness data w/ 18 image points (144 preimage points)
\end{verbatim}
indicating that the Euclidean differential signature map is generically 8 to 1, which is equivalent to the Fermat curve having eight Euclidean symmetries \cite[Thm 2.38]{KRV18}. We timed these witness set computations at $5$ and $0.5$ seconds, respectively. For joint signatures, the analagous computations were timed at $95$ and $17$ seconds. 
\end{example}
\begin{figure}
  \begin{tabular}{c|c|c|c|c}
$d$ & $\deg \SigE$ & time (s) & $\deg_{(1,0)}\SigE$ & time (s) \\
\hline
2 & 6 & 0.3 & 3 & 0.1 \\
3 & 72 & 2 & 36 & 0.5\\
4 & 144 & 9 & 72 & 2\\
5 & 240 & 21 & 120 & 4\\
6 & 360 & 55 & 180 & 7
\end{tabular}
\caption{Degrees and monodromy timings for differential signatures.}\label{fig:diff-monodromy}
\end{figure}
\begin{figure}
\begin{tabular}{c|c|c|c|c|c|c}
$d$ & $\deg \JE$ & time (s) & $\deg_{\bs{e_1}} \JE $ & time (s) & $\deg_{\bs{e_2}} \JE $ & time (s) \\
\hline
2 & 42       & 4 & 24 & 2 & 26 & 2\\
3 & 936      & 33 & 576 & 17 & 696 & 16\\
4 & 3024     & 139 & 1920 & 57 & 2448 & 87 \\
5 & 7440     & 463 & 4800 & 206 & 6320 & 276\\
6 & 15480    & 1315 & 10080 & 748 & 13560 & 791
\end{tabular}
\caption{Degrees and monodromy timings for joint signatures (see Conjecture~\ref{conjecture:degrees}.)}\label{fig:joint-monodromy}
\end{figure}
Figures~\ref{fig:diff-monodromy} and~\ref{fig:joint-monodromy} give degrees and single-run timings for monodromy computations on curves up to degree $6$ under the Euclidean differential and joint signatures. We also considered multiprojective witness sets for $\SigE\subset \CC^1 \times \CC^1$ and $\JE\subset (\CC^1)^6,$ where fewer witness points are needed. For the differential signatures, we considered $(1,0)$-slices which fix the value of the squared curvature $K_1.$ For Euclidean joint signatures, there are two combinatorially distinct classes of $(\CC^1)^6$ witness sets determined by which $d_{i,j}$ are fixed; the undirected graph of fixed distances must either be the $3$-pan (a $3$-cycle with pendant edge) or the $4$-cycle. We fix corresponding multidimensions $\bs{e}_1=(1,1,1,1,0,0)$ and $\bs{e}_2=(0,1,1,1,1,0).$

The timings in figures~\ref{fig:diff-monodromy} and~\ref{fig:joint-monodromy} are not optimal for a number of reasons. For instance, some multiprojective witness sets have an \emph{imprimitive} monodromy action, meaning that additional symmetries can be exploited ~\cite{AR16}. We successfully ran monodromy (with less conservative settings) for both signature maps on curves of degree up to $10.$ These computations suggested formulas for the degrees. For the Euclidean joint signature, we state these formulas in the form of a conjecture. For the case of Euclidean differential signatures, see~\cite{KRV18}; degrees for $d=2$ are corrected by a factor of $4.$
\begin{conj}
  \label{conjecture:degrees}
Let $\JE_d$ denote the Euclidean joint signature for a generic plane curve of degree $d.$ For $d\ge 3$:
\begin{itemize}
\item[] $\deg \overline{\JE_d}= 12 d (d^3-1)$
\item[] $\deg_{\bs{e}_1} \JE_d = 8 d^2 (d^2-1)$ 
\item[] $\deg_{\bs{e}_2} \JE_d = 4 d (d-1) (3d^2+d-1).$ 
\end{itemize}
\end{conj}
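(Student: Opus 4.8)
We outline a strategy that would prove Conjecture~\ref{conjecture:degrees}. The plan is to compute each of the three numbers as an enumerative intersection number, using that for a generic plane curve $C$ of degree $d\ge 3$ the maps $d_3$ and $d_4$ are generically finite of the expected degree: a generic plane curve of degree $d\ge3$ has no nontrivial projective automorphisms, hence trivial symmetry group under $\E(\CC)$, so by Proposition~\ref{prop:jointExceptionals} the map $d_4$ is generically one‑to‑one and $\deg\overline{\JE_d}$ equals the number of $4$‑tuples $(p_1,\dots,p_4)\in C^4$ with $\ell_1(d_4)=\cdots=\ell_4(d_4)=0$ for generic affine‑linear forms $\ell_i$ in the squared distances. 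For the multidegrees, fixing the four coordinates picked out by $\bs{e}_1$ decouples the count: the three fixed distances among $p_1,p_2,p_3$ pin down a generic triangle, and the fourth fixed distance $d_{14}$ cuts $C$ in $2d$ points, since the conic $\{d_{14}=c\}$ meets $\overline{C}$ transversally and avoids the line at infinity (it passes through the circular points $[1:\pm\sqrt{-1}:0]\notin\overline{C}$). Thus $\deg_{\bs{e}_1}\JE_d = 2d\cdot\deg(d_3)$ and one is reduced to $\deg(d_3)=4d(d^2-1)$; the $\bs{e}_2$ count is similar but has a genuinely two‑variable inner count.

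The degree of $d_3$ is naturally computed on a compactification of the group $\E(\CC)$: writing an element as $(\alpha,\lambda,a,b)$ with $\alpha=\pm1$, $\lambda\in\CC^\ast$ encoding the rotation and $(a,b)\in\CC^2$ the translation, the condition ``$g\cdot p_i\in C$'' becomes a divisor on $\{\pm1\}\times\PP^1\times\PP^2$ whose class is read off from the fact that $F(g\cdot p_i)$ has $\lambda$‑degree $2d$ and $(a,b)$‑degree $d$. A B\'ezout‑type count on $\{\pm1\}\times\PP^1\times\PP^2$ then yields a polynomial in $d$, from which one must subtract the contribution of the boundary. The key geometric input is that the rotation‑at‑infinity strata $\{\lambda=0\},\{\lambda=\infty\}$ contribute nothing --- a rotated copy of $C$ degenerating ``to infinity'' would have to meet the line at infinity at a circular point, which a generic $C$ avoids --- so the entire correction lives over the translation‑at‑infinity locus, where all three divisors acquire common components indexed by the $d$ points at infinity of $\overline{C}$ (there the leading behaviour of $F(g\cdot p_i)$ does not depend on $p_i$, because the translation part of $g$ dominates and $\overline{g\cdot C}$ shares its points at infinity with $\overline{C}$). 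The analogous setup for $\deg\overline{\JE_d}$ and $\deg_{\bs{e}_2}$ compactifies $\overline{C}^{\,k}\subset(\PP^2)^k$ (or $\overline{C}$ inside $\PP^1\times\PP^1$ via the null coordinates $u=x+\sqrt{-1}\,y$, $v=x-\sqrt{-1}\,y$, along which $d_{ij}=(u_i-u_j)(v_i-v_j)$ factors): the linear‑in‑distances divisors then have class $2\sum_i e_i$, where $e_i$ is the pullback to the $i$‑th factor of the divisor of $d$ points at infinity of $\overline{C}$, and one again subtracts boundary contributions coming from the strata where a subset $S\subseteq\{1,\dots,k\}$ of the points lies at infinity.

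The hard part is carrying out these boundary corrections correctly. Because the naive count is larger than the true value by more than an order of magnitude, the excess‑intersection bookkeeping is unforgiving: one needs the Segre classes of the boundary strata inside the ambient compactification, the restrictions of the divisor classes to them, and --- where several divisors share a positive‑dimensional component --- a residual‑intersection computation, best organized by an inclusion--exclusion over the subsets $S$. Each correction term, and hence each of the three degrees, should come out as an explicit polynomial in $d$ (here of degree $4$), which is consistent with the data of Section~\ref{sec:implementation}; once polynomiality of bounded degree is established --- e.g.\ directly from the intersection‑theoretic formulas, which are manifestly polynomial in $d$ since everything is built from the class $e_i$ of degree $d$ --- the finitely many exactly verified values from Section~\ref{sec:implementation} pin down the formulas. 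An attractive alternative that sidesteps the separate boundary analyses is to degenerate the generic smooth curve to an irreducible rational curve with $\binom{d-1}{2}$ nodes, parametrize it by $\PP^1$, and compute the degree of the image of the induced rational map $(\PP^1)^k\ratmap\PP^N$ by resolving its base locus (the same circular‑points‑at‑infinity phenomena reappear there as the base locus of $d_4$, but now on the very simple variety $(\PP^1)^k$); the main subtlety is checking that the nodal rational curve lies in the locus where the image degree is the generic one. The cleanest outcome of all would be to exhibit a compactification of $\overline{C}^{\,k}$ on which $d_4$ extends to a morphism, reducing each degree to a single Chern‑number computation.
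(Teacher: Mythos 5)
The paper does not prove this statement: it is stated explicitly as a conjecture, supported only by numerical monodromy computations for $d\le 10$, so there is no proof of record to compare yours against. Judged on its own terms, your submission is a research programme rather than a proof, and the gap is exactly where you locate it: none of the boundary/excess-intersection corrections are actually carried out, and those corrections are the entire content of the problem, since (as you note) the naive B\'ezout counts overshoot the true answers by an order of magnitude. The fallback of proving polynomiality in $d$ and interpolating from the values in Section~\ref{sec:implementation} does not close the gap either: those values are produced by numerical monodromy, which certifies at best a lower bound on the number of witness points (no trace test or certification is reported), which is precisely why the paper presents the formulas as a conjecture; and the polynomiality you would interpolate against is itself only ``manifest'' once the intersection-theoretic computation has been set up and its boundary terms controlled, which is circular. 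A smaller but genuine error: a generic smooth plane cubic \emph{does} have nontrivial projective automorphisms (a group of order $18$), so your justification for triviality of $\Sym(C,\E(\CC))$ at $d=3$ is wrong as stated; what you need is that for generic coefficients none of these automorphisms lies in $\E(\CC)$, which is true but requires its own (easy) argument.

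That said, parts of the outline are correct and genuinely useful. The reduction $\deg_{\bs{e}_1}\JE_d = 2d\cdot\deg(d_3)$ is sound: fixing $d_{12},d_{13},d_{23}$ at generic values yields $\deg(d_3)$ triples on $C^3$ (Proposition~\ref{prop:invsep} plus generic finiteness of $d_3$, which for generic $C$ follows from Proposition~\ref{prop:jointExceptionals}); the circle $\{d_{14}=c\}$ meets $\overline{C}$ in $2d$ affine transverse points because a generic $\overline{C}$ misses the circular points $[1:\pm i:0]$; and generic injectivity of $d_4$ converts the preimage count into the image count. This forces $\deg(d_3)=4d(d^2-1)$, which is consistent with every tabulated value (including $d=2$ after dividing by the generic conic's symmetry group of order $4$), and it isolates $\deg(d_3)$ as the one enumerative number that actually has to be computed for the $\bs{e}_1$ formula. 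But until the residual-intersection computation is carried out on a concrete compactification --- or certified degree counts for sufficiently many $d$ are combined with a rigorous polynomiality statement --- the conjecture remains open, and your text does not change that status.
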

To assess the speed and robustness of the online equality test, we conducted an experiment where, for degrees $d = 2,\ldots , 6$, curves $C_1,\ldots , C_{10}$  were generated with coefficients drawn uniformly from the unit sphere in $\mathbb{R}^{(d+2)(d+1)/2}.$ For each $C_i,$ we computed a witness set via parameter homotopy from a generic degree $d$ curve. We then applied $20$ random transformations from $\E (\RR)$ to the $C_i$ and perturbed the resulting coefficients by random real $\vec{\epsilon}$ with $\lVert\vec{\epsilon}\rVert_2 \in \{ 0, 10^{-7}, 10^{-6}, \ldots, 10^{-3}\},$ thus obtaining curves $\widetilde{C_{i,1,\epsilon}}, \ldots , \widetilde{C_{i,20,\epsilon}}.$ With all numerical tolerances fixed, we ran the equality test for each $\widetilde{C_{i,j,\epsilon}}$ against each $C_i.$ 

\begin{figure}
\begin{tabular}{c|c|c|c|c}
$d$ & track time (ms) & lookup time (ms) & track $K_1$ & lookup $K_1$\\
2 & 191 & 0.35 & 127 & 0.25\\
3 & 177 & 0.37 & 121 & 0.31\\
4 & 276 & 0.42 & 145 & 0.36\\
5 & 472 & 0.39 & 203 & 0.43\\
6 & 597 & 0.40 & 284 & 0.37
\end{tabular}
\caption{Equality test timings  for Euclidean differential signatures $\SigE.$}\label{fig:equality-diff}
\end{figure}

\begin{figure}
\begin{tabular}{c|c|c|c|c}
$d$ & track time (ms) & lookup time (ms) & track $\bs{e}_1$ & lookup $\bs{e}_1$\\
2 & 230 & 0.36 & 208 & 0.34\\
3 & 283 & 0.38 & 213 & 0.35\\
4 & 335 & 0.39 & 288 & 0.40\\
5 & 409 & 0.32 & 357 & 0.32\\
6 & 507 & 0.32 & 462 & 0.33
\end{tabular}
\caption{Equality test timings for Euclidean joint signatures $\JE.$}\label{fig:equality-joint}
\end{figure}

Figures~\ref{fig:equality-diff} and~\ref{fig:equality-joint} summarize the timings for the equality tests in this experiment. Overall, these tests run on the order of sub-seconds. Most of the time is spent on path-tracking. The tracking times reported give the total time spent on lines 1 and 5 of Algorithm~\ref{alg:equality-test}. The only other possible bottleneck is the lookup on line 7. This is negligible, even for large witness set sizes, if an appropriate data structure is used. The runtimes for all cases considered seem comparable, although using differential signatures and multiprojective slices appear to give a slight edge over the respective alternatives.

\begin{figure}[H]
\begin{center}
  \begin{tabular}{cc}
\includegraphics[width=0.4\textwidth, height=0.4\textwidth]{./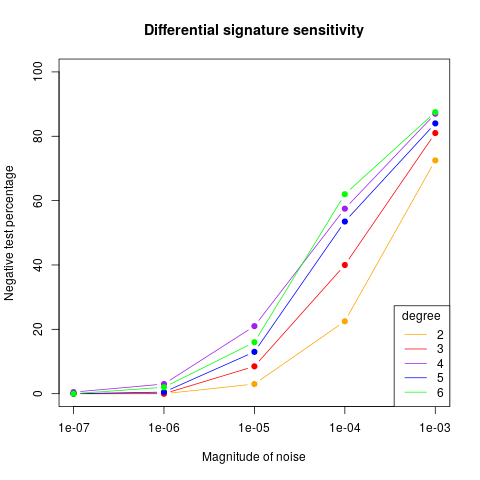} &
\includegraphics[width=0.4\textwidth, height=0.4\textwidth]{./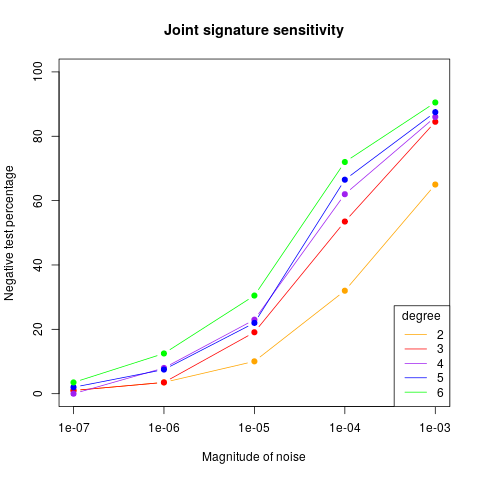}
    \\
\includegraphics[width=0.4\textwidth, height=0.4\textwidth]{./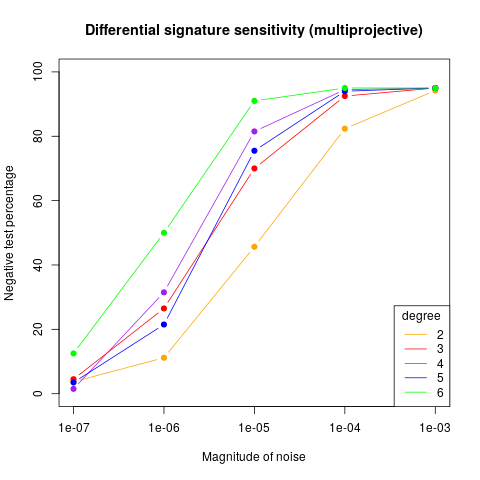} &
\includegraphics[width=0.4\textwidth, height=0.4\textwidth]{./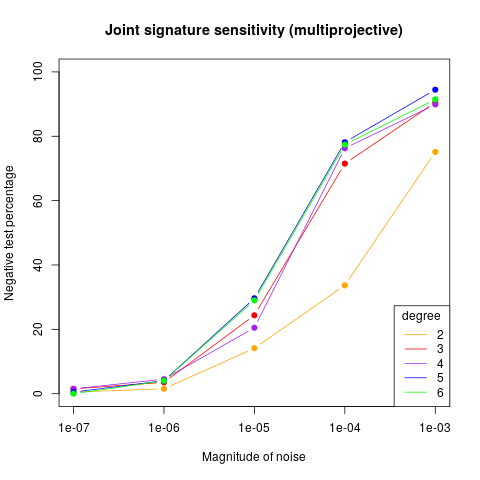}
    \\
  \end{tabular}
\end{center}
\caption{Sensitivity of the equality test on Euclidean signatures to noise.}\label{fig:sens-plot}
\end{figure}

The plots in Figure~\ref{fig:sens-plot} illustrate the results of our sensitivity analysis. The respective axes are the magnitude of the noise $\epsilon $ and the percentage of $C_{i,j,\epsilon}$ deemed to be not equivalent to $C_i.$ Note that the horizontal axis is given on a log scale, and excludes the noiseless case $\epsilon =0$; for this case, among all tests in the experiment, only one false negative was reported for the differential signatures with $d=6.$ We include a trend line to make the plots more readable. In general, we observe a threshold phenomenon, where most tests are positive for sufficiently low noise and are negative for sufficiently high noise. Besides the multiprojective differential signature (depicted in the bottom-left), we observe a similar stability profile for this type of random perturbation.

\begin{remark}
The thresholds in these experiments clearly depend on the numerical tolerances used (for this experiment, defaults are provided by \verb|NAG4M2|), the type of map, and the type of witness set.
\end{remark}

In Figure~\ref{fig:sens-EqAff-plot}, we reproduce the previous experiment for curves of degrees $d=3,4,5$ under $\SA(\CC)$. Perhaps unsurprisingly due to the higher degree of the image and the complexity of evaluating the signature maps, the equality test in this case is much more sensitive to small perturbations.
Here we observe a significant difference in the sensitivity between the equi-affine joint and differential equality tests.
In contrast to the Euclidean case, the joint signature appears to be \emph{far less} sensitive.
We also now observe in around 2\% of cases overall that there are \emph{failures} due to path-tracking, resulting in neither an equivalent nor inequivalent outcome.
We again exclude the noiseless case $\epsilon =0$ in these graphs where the false negative rate was  less than 1\%. 
Surprisingly, we also observed a non-negligible rate of ``false-positives'' for the $\SA (\CC)$ joint signature, wherein some $C_i$ and $C_j$ are declared equivalent.
We also note that we do not have an analogue of Conjecture~\ref{conjecture:degrees} for $\JA ,$ leaving us less certain about the completeness of the witness sets collected.

\begin{figure}[H]
\begin{center}
  \begin{tabular}{cc}
\includegraphics[width=0.4\textwidth, height=0.4\textwidth]{./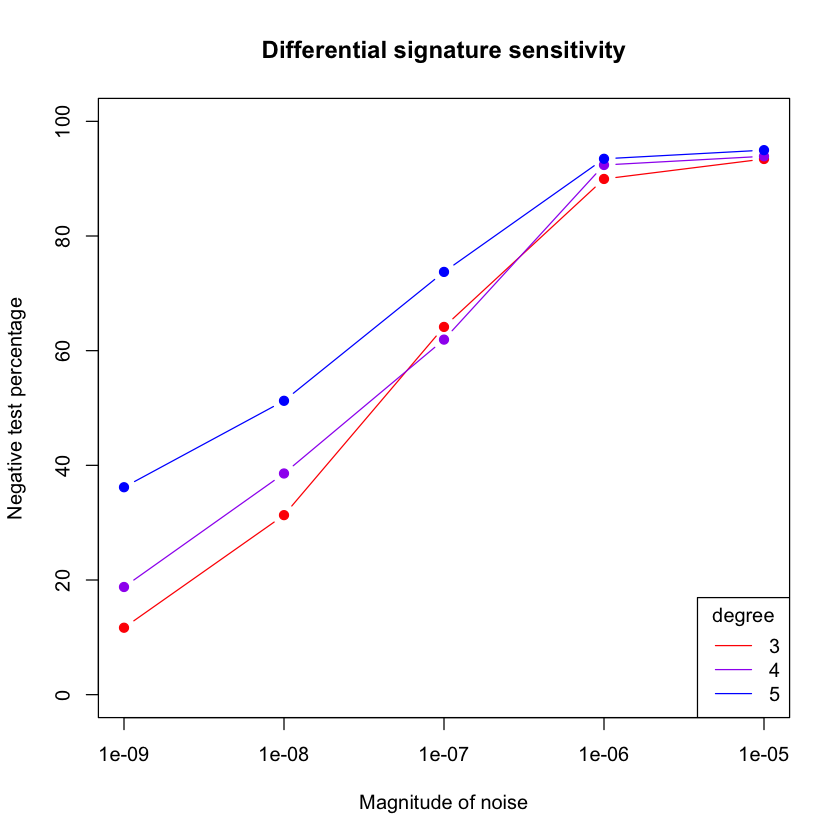} &
\includegraphics[width=0.4\textwidth, height=0.4\textwidth]{./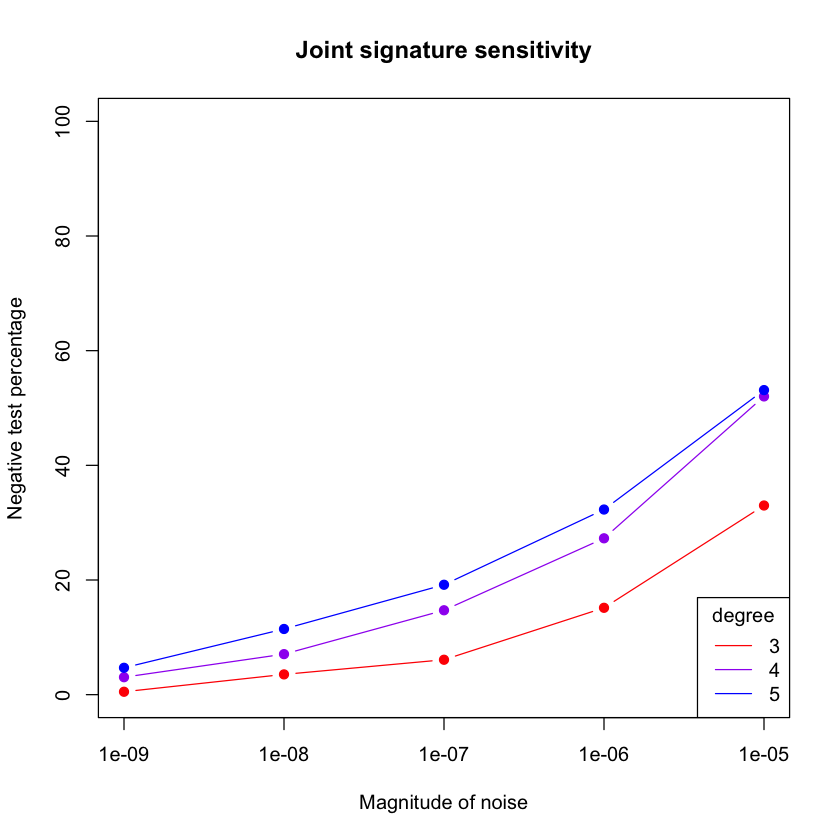}
    \\
  \end{tabular}
\end{center}
\caption{Sensitivity of the equality test on equi-affine signatures to noise.}\label{fig:sens-EqAff-plot}
\end{figure}

Finally we conduct the same experiment for the Euclidean differential and joint signatures under a different scheme of noise, with a view towards applications like curve-matching~\cite{HO13}.
Instead of perturbing the coefficients of the algebraic curve, we sample $\begin{pmatrix}d+2\\2\end{pmatrix}+1$ points on curves $C_1,\hdots, C_{10}$, perturb these points by $\vec{\epsilon} \in \RR^2$ with $|\vec{\epsilon }|=\epsilon ,$ and then reconstruct a new algebraic curve of the same degree through \emph{interpolation} before applying a random transformation from $\E(\RR)$.
Specifically, the equation defining our interpolated curve comes from singular vectors of the \emph{Vandermonde matrix} of all degree-$\le d$ monomials evaluated at the samples, as in~\cite{LVS}.
We emphasize that the coefficients of the perturbed curves have a more complicated dependence on $\epsilon $ in this experiment.
Moreover, we caution that our results may also depend on the number of points sampled from each curve.
Still, we find that the observations from this new experiment, with a more meaningful model of noise, and our original experiment are roughly consistent.

\begin{figure}[H]
\begin{center}
  \begin{tabular}{cc}
\includegraphics[width=0.4\textwidth, height=0.4\textwidth]{./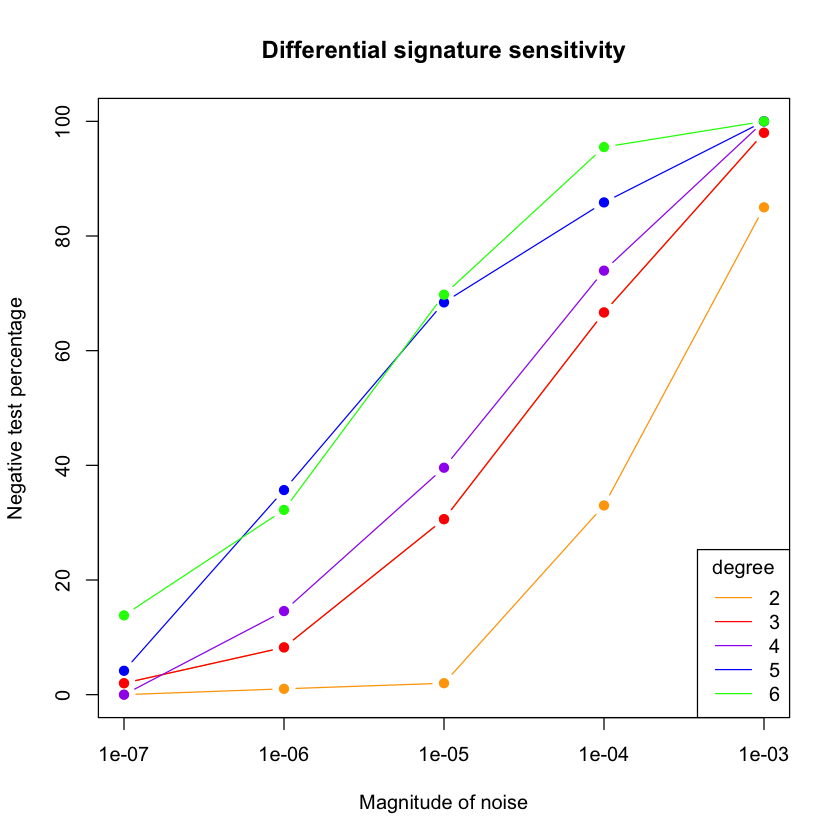} &
\includegraphics[width=0.4\textwidth, height=0.4\textwidth]{./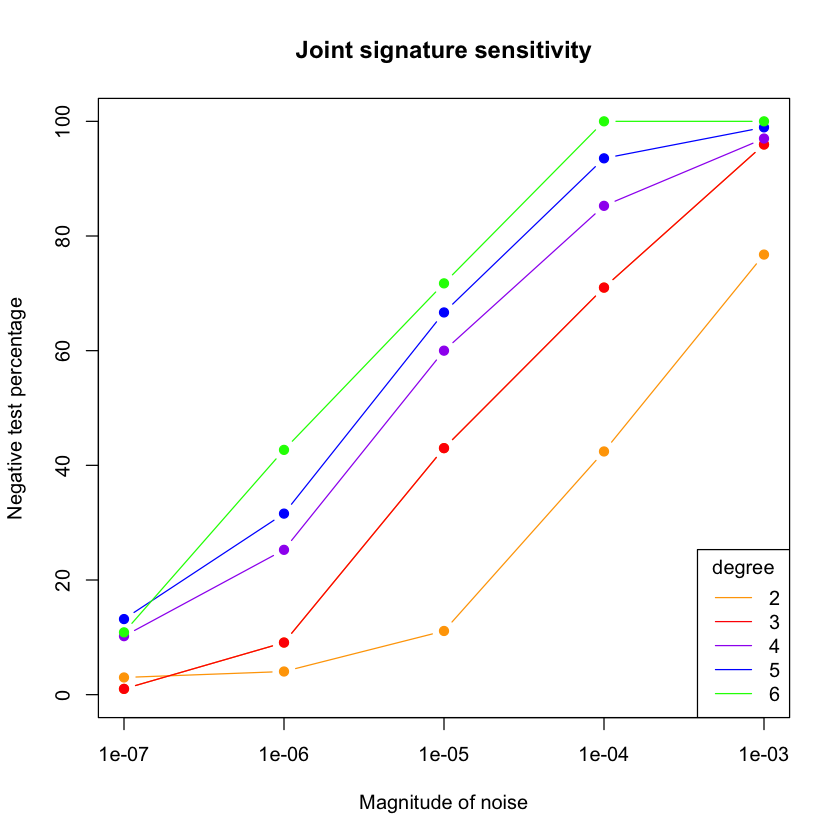}
    \\
  \end{tabular}
\end{center}
\caption{Sensitivity of the equality test for Euclidean signatures of curves computed from noisy samples.}\label{fig:sens-geometric-plot}
\end{figure}

In closing, we have shown that numerical algebraic geometry gives an effective way of solving the group equivalence problem for plane algebraic curves.
Our results open up new avenues of mathematical research, indicated at the end of Section~\ref{subsec:witness} and in Conjecture~\ref{conjecture:degrees}.
We also considered the effects of noise which might be relevant in applications.
In general, our methods seem to be brittle against significant levels of noise.
Nonetheless, we hope our efforts motivate work on the applications of curve signatures in the future.

\section*{Acknowledgments}
{\small Research of T.~Duff is supported in part by NSF DMS-1719968, a fellowship from the Algorithms and Randomness Center at Georgia Tech, and by the Max Planck Institute for Mathematics in the Sciences in Leipzig.}
{\small Research of M.~Ruddy was supported in part by the Max Planck Institute for Mathematics in the Sciences in Leipzig.}

\bibliographystyle{acm}
\bibliography{art}

\begin{thebibliography}{10}

\bibitem{AG12}
{\sc Allgower, E.~L., and Georg, K.}
\newblock {\em Numerical continuation methods: an introduction}, vol.~13.
\newblock Springer Science \& Business Media, 2012.

\bibitem{AR16}
{\sc Am{\'e}ndola, C., and Rodriguez, J.~I.}
\newblock Solving parameterized polynomial systems with decomposable
  projections.
\newblock {\em arXiv preprint arXiv:1612.08807\/} (2016).

\bibitem{BHSW13}
{\sc Bates, D.~J., , Hauenstein, Jonathan D~Sommese, A.~J., and Wampler, C.~W.}
\newblock {\em Numerically solving polynomial systems with {B}ertini}.
\newblock SIAM, 2013.

\bibitem{BO00}
{\sc {Berchenko (Kogan)}, I.~A., and Olver, P.~J.}
\newblock Symmetries of polynomials.
\newblock {\em Journal of Symbolic Computations 29\/} (2000), 485--514.

\bibitem{LVS}
{\sc Breiding, P., Kali{\v{s}}nik, S., Sturmfels, B., and Weinstein, M.}
\newblock Learning algebraic varieties from samples.
\newblock {\em Revista Matem{\'a}tica Complutense 31}, 3 (2018), 545--593.

\bibitem{Bry18}
{\sc Brysiewicz, T.}
\newblock Numerical software to compute newton polytopes.
\newblock In {\em International Congress on Mathematical Software\/} (2018),
  Springer, pp.~80--88.

\bibitem{BKH13}
{\sc Burdis, J.~M., Kogan, I.~A., and Hong, H.}
\newblock Object-image correspondence for algebraic curves under projections.
\newblock {\em SIGMA Symmetry Integrability Geom. Methods Appl. 9\/} (2013),
  Paper 023, 31.

\bibitem{CO98}
{\sc Calabi, E., Olver, P.~J., Shakiban, C., Tannenbaum, A., and Haker, S.}
\newblock Differential and numerically invariant signatures curves applied to
  object recognition.
\newblock {\em Int. J. Computer vision 26\/} (1998), Paper 107,135.

\bibitem{CK19}
{\sc Chen, J., and Kileel, J.}
\newblock Numerical implicitization for macaulay2.
\newblock {\em Journal of Software for Algebra and Geometry 9\/} (2019),
  55--65.

\bibitem{DK15}
{\sc Derksen, H., and Kemper, G.}
\newblock {\em Computational invariant theory}, enlarged~ed., vol.~130 of {\em
  Encyclopaedia of Mathematical Sciences}.
\newblock Springer, Heidelberg, 2015.

\bibitem{DR20}
{\sc Duff, T., and Ruddy, M.}
\newblock Numerical equality tests for rational maps and signatures of curves.
\newblock In {\em Proceedings of the 45th International Symposium on Symbolic
  and Algebraic Computation\/} (July 2020), {ACM}.

\bibitem{FO99}
{\sc Fels, M., and Olver, P.~J.}
\newblock {Moving Coframes. II. Regularization and Theoretical Foundations}.
\newblock {\em Acta Appl. Math. 55\/} (1999), 127--208.

\bibitem{M2}
{\sc Grayson, D., and Stillman, M.}
\newblock Macaulay 2--a system for computation in algebraic geometry and
  commutative algebra, 1997.

\bibitem{GS17}
{\sc Grim, A., and Shakiban, C.}
\newblock Applications of signature curves to characterize melanomas and moles.
\newblock In {\em Applications of computer algebra}, vol.~198 of {\em Springer
  Proc. Math. Stat.} Springer, Cham, 2017, pp.~171--189.

\bibitem{Gugg}
{\sc Guggenheimer, H.~W.}
\newblock {\em Differential geometry}.
\newblock McGraw-Hill Book Co., Inc., New York-San Francisco-Toronto-London,
  1963.

\bibitem{Har13}
{\sc Harris, J.}
\newblock {\em Algebraic geometry: a first course}, vol.~133.
\newblock Springer Science \& Business Media, 2013.

\bibitem{HLRS}
{\sc Hauenstein, J.~D., Leykin, A., Rodriguez, J.~I., and Sottile, F.}
\newblock A numerical toolkit for multiprojective varieties.
\newblock {\em To appear in Mathematics of Computation\/} (2019).

\bibitem{Reganpseudo}
{\sc Hauenstein, J.~D., and Regan, M.~H.}
\newblock Evaluating and differentiating a polynomial using a pseudo-witness
  set.
\newblock In {\em International Congress on Mathematical Software\/} (2020),
  Springer, pp.~61--69.

\bibitem{HR15}
{\sc Hauenstein, J.~D., and Rodriguez, J.~I.}
\newblock Multiprojective witness sets and a trace test.
\newblock {\em To appear in Advances in Geometry. arXiv preprint
  arXiv:1507.07069\/} (2019).

\bibitem{HS10}
{\sc Hauenstein, J.~D., and Sommese, A.~J.}
\newblock Witness sets of projections.
\newblock {\em Applied Mathematics and Computation 217}, 7 (2010), 3349--3354.

\bibitem{HS13}
{\sc Hauenstein, J.~D., and Sommese, A.~J.}
\newblock Membership tests for images of algebraic sets by linear projections.
\newblock {\em Applied Mathematics and Computation 219}, 12 (2013), 6809--6818.

\bibitem{Sottilepseudo}
{\sc Hauenstein, J.~D., and Sottile, F.}
\newblock Newton polytopes and witness sets.
\newblock {\em Mathematics in Computer Science 8}, 2 (2014), 235--251.

\bibitem{HO13}
{\sc Hoff, D.~J., and Olver, P.~J.}
\newblock Extensions of invariant signatures for object recognition.
\newblock {\em J. Math. Imaging Vision 45}, 2 (2013), 176--185.

\bibitem{HO14}
{\sc Hoff, D.~J., and Olver, P.~J.}
\newblock Automatic solution of jigsaw puzzles.
\newblock {\em J. Math. Imaging Vision 49}, 1 (2014), 234--250.

\bibitem{HK07}
{\sc Hubert, E., and Kogan, I.~A.}
\newblock {Smooth and algebraic invariants of a group action: local and global
  construction.}
\newblock {\em Foundation of Computational Math.~J. 7:4\/} (2007), 345--383.

\bibitem{KM02}
{\sc Kogan, I.~A., and Moreno~Maza, M.}
\newblock Computation of canonical forms for ternary cubics.
\newblock In {\em Proceedings of the 2002 {I}nternational {S}ymposium on
  {S}ymbolic and {A}lgebraic {C}omputation\/} (2002), ACM, New York,
  pp.~151--160.

\bibitem{KRV18}
{\sc Kogan, I.~A., Ruddy, M., and Vinzant, C.}
\newblock Differential {S}ignatures of {A}lgebraic {C}urves.
\newblock {\em SIAM J. Appl. Algebra Geom. 4}, 1 (2020), 185--226.

\bibitem{Ley11}
{\sc Leykin, A.}
\newblock Numerical algebraic geometry.
\newblock {\em Journal of Software for Algebra and Geometry 3}, 1 (2011),
  5--10.

\bibitem{Ley18}
{\sc Leykin, A.}
\newblock Homotopy continuation in macaulay2.
\newblock In {\em International Congress on Mathematical Software\/} (2018),
  Springer, pp.~328--334.

\bibitem{LRS18}
{\sc Leykin, A., Rodriguez, J.~I., and Sottile, F.}
\newblock Trace test.
\newblock {\em Arnold Mathematical Journal 4}, 1 (2018), 113--125.

\bibitem{ratsimp}
{\sc Monagan, M., and Pearce, R.}
\newblock Rational simplification modulo a polynomial ideal.
\newblock In {\em I{SSAC} 2006}. ACM, New York, 2006, pp.~239--245.

\bibitem{Mor09}
{\sc Morgan, A.}
\newblock {\em Solving polynomial systems using continuation for engineering
  and scientific problems}, vol.~57.
\newblock SIAM, 2009.

\bibitem{MS89}
{\sc Morgan, A.~P., and Sommese, A.~J.}
\newblock Coefficient-parameter polynomial continuation.
\newblock {\em Applied Mathematics and Computation 29}, 2 (1989), 123--160.

\bibitem{Z94}
{\sc Mundy, J.~L., Zisserman, A., and Forsyth, D.}, Eds.
\newblock {\em Applications of Invariance in Computer Vision}.
\newblock Springer Berlin Heidelberg, 1994.

\bibitem{Olv95}
{\sc Olver, P.~J.}
\newblock {\em Equivalence, invariants and symmetry}.
\newblock Cambridge University Press, 1995.

\bibitem{O99}
{\sc Olver, P.~J.}
\newblock {\em Classical invariant theory}, vol.~44 of {\em London Mathematical
  Society Student Texts}.
\newblock Cambridge University Press, Cambridge, 1999.

\bibitem{O01}
{\sc Olver, P.~J.}
\newblock Joint invariant signatures.
\newblock {\em Found. Comput. Math. 1}, 1 (2001), 3--67.

\bibitem{RuddyThesis}
{\sc Ruddy, M.}
\newblock {\em The Equivalence Problem and Signatures of Algebraic Curves}.
\newblock PhD thesis, North Carolina State University, 2019.

\bibitem{Shaf94}
{\sc Shafarevich, I.}
\newblock {\em Basic algebraic geometry}, 2~ed., vol.~2.
\newblock Springer, 1994.

\bibitem{SVW05}
{\sc Sommese, A.~J., Verschelde, J., and Wampler, C.~W.}
\newblock Introduction to numerical algebraic geometry.
\newblock In {\em Solving polynomial equations}. Springer, 2005, pp.~301--337.

\bibitem{S08}
{\sc Sturmfels, B.}
\newblock {\em Algorithms in Invariant Theory}.
\newblock Springer Vienna, 2008.

\bibitem{SW05}
{\sc Wampler, I. C.~W., et~al.}
\newblock {\em The Numerical solution of systems of polynomials arising in
  engineering and science}.
\newblock World Scientific, 2005.

\end{thebibliography}

\end{document}